\documentclass[a4paper, twoside]{article}
\usepackage{a4}
\usepackage{amssymb}
\usepackage{amsmath}
\usepackage{aligned-overset}
\usepackage[active]{srcltx}
\usepackage[pagebackref,colorlinks,citecolor=blue,linkcolor=blue,urlcolor=blue]{hyperref}
\usepackage[dvipsnames]{color}
\usepackage{upref}
\allowdisplaybreaks[2] 
\newcount\minutes \newcount\hours
\hours=\time
\divide\hours 60
\minutes=\hours
\multiply\minutes -60
\advance\minutes \time
\newcommand{\klockan}{\the\hours:{\ifnum\minutes<10 0\fi}\the\minutes}
\newcommand{\tid}{\today\ \klockan}
\newcommand{\prtid}{\smash{\raise 10mm \hbox{\LaTeX ed \tid}}}
\renewcommand{\prtid}{}
\makeatletter
\def\sectionmark#1{} 
\def\subsectionmark#1{}
\newcommand{\sectnr}{\ifnum \c@secnumdepth >\z@
                 \thesection.\hskip 1em\relax \fi}
\def\@evenhead{\footnotesize\rm\thepage\hfil\leftmark\hfil\llap{\prtid}}
\def\@oddhead{\footnotesize\rm\rlap{\prtid}\hfil\rightmark\hfil\thepage}
\def\tableofcontents{\section*{Contents} 
 \@starttoc{toc}}
\makeatother
\makeatletter
\def\@biblabel#1{#1.}
\makeatother
\makeatletter
\let\Thebibliography=\thebibliography
\renewcommand{\thebibliography}[1]{\def\@mkboth##1##2{}\Thebibliography{#1}
\addcontentsline{toc}{section}{References}
\frenchspacing 
\setlength{\@topsep}{0pt}
\setlength{\itemsep}{0pt}%
\setlength{\parskip}{0pt plus 2pt}%
}
\makeatother
\makeatletter
\def\mdots@{\mathinner.\nonscript\!.%
 \ifx\next,.\else\ifx\next;.\else\ifx\next..\else
 \nonscript\!\mathinner.\fi\fi\fi}
\let\ldots\mdots@
\let\cdots\mdots@
\let\dotso\mdots@
\let\dotsb\mdots@
\let\dotsm\mdots@
\let\dotsc\mdots@
\def\vdots{\vbox{\baselineskip2.8\p@ \lineskiplimit\z@
    \kern6\p@\hbox{.}\hbox{.}\hbox{.}\kern3\p@}}
\def\ddots{\mathinner{\mkern1mu\raise8.6\p@\vbox{\kern7\p@\hbox{.}}%
    \raise5.8\p@\hbox{.}\raise3\p@\hbox{.}\mkern1mu}}
\def\cdots{\mathinner{\mkern1mu{\cdot}{\cdot}{\cdot}\mkern1mu}}
\makeatother
\makeatletter
\let\Enumerate=\enumerate
\renewcommand{\enumerate}{\Enumerate%
\setlength{\itemsep}{0pt}%
\setlength{\parskip}{0pt plus 1pt}%
\renewcommand{\theenumi}{\textup{(\alph{enumi})}}%
\renewcommand{\labelenumi}{\theenumi}%
}
\makeatother
\makeatletter
\def\@seccntformat#1{\csname the#1\endcsname.\quad}
\makeatother
\makeatletter
\long\def\@makecaption#1#2{%
  \vskip\abovecaptionskip
  \sbox\@tempboxa{ #1. #2}%
  \ifdim \wd\@tempboxa >\hsize
    #1. #2\par
  \else
    \global \@minipagefalse
    \hb@xt@\hsize{\hfil\box\@tempboxa\hfil}%
  \fi
  \vskip\belowcaptionskip}
\makeatother
\RequirePackage{ifthen}
\newcommand{\authortitle}[3]{\author{#1}\title{#2}%
   \ifthenelse{\equal{#3}{}}{\markboth{#1}{#2}}{\markboth{#1}{#3}}}
\newcommand{\art}[6]{{\sc #1, \rm #2, \it #3\/ \bf #4 \rm (#5), \mbox{#6}.}}
\newcommand{\auth}[2]{{#1, #2.}}
\def\idxauth{\auth}

\newcommand{\artprep}[3]{{\sc #1, \rm #2, #3.}}
\newcommand{\artin}[3]{{\sc #1, \rm #2, in #3.}}
\newcommand{\book}[3]{{\sc #1, \it #2, \rm #3.}}
\newcommand{\AND}{{\rm and }}
\newcommand{\arXiv}[1]{{\tt \href{https://arxiv.org/abs/#1}{arXiv:#1}}}
\RequirePackage{amsthm}
\newtheoremstyle{descriptive}%
  {\topsep}   
  {\topsep}   
  {\rmfamily} 
  {}          
  {\bfseries} 
  {.}         
  { }         
  {}          
\newtheoremstyle{propositional}%
  {\topsep}   
  {\topsep}   
  {\itshape}  
  {}          
  {\bfseries} 
  {.}         
  { }         
  {}          
\theoremstyle{propositional}
\newtheorem{thm}{Theorem}[section]
\newtheorem{prop}[thm]{Proposition}
\newtheorem{lem}[thm]{Lemma}
\newtheorem{cor}[thm]{Corollary}
\newtheorem*{Petrovskii}{Petrovski\u\i's criterion}
\theoremstyle{descriptive}
\newtheorem{definition}[thm]{Definition}
\newtheorem{remark}[thm]{Remark}
\newtheorem{example}[thm]{Example}
\newtheorem{openprob}[thm]{Open problem}

\makeatletter
\renewenvironment{proof}[1][\proofname]{\par
  \pushQED{\qed}%
  \normalfont
  \trivlist
  \item[\hskip\labelsep
        \itshape
    #1\@addpunct{.}]\ignorespaces
}{%
  \popQED\endtrivlist\@endpefalse
}
\makeatother
{\catcode`p =12 \catcode`t =12 \gdef\eeaa#1pt{#1}}      
\def\accentadjtext#1{\setbox0\hbox{$#1$}\kern   
                \expandafter\eeaa\the\fontdimen1\textfont1 \ht0 }
\def\accentadjscript#1{\setbox0\hbox{$#1$}\kern 
                \expandafter\eeaa\the\fontdimen1\scriptfont1 \ht0 }
\def\accentadjscriptscript#1{\setbox0\hbox{$#1$}\kern   
                \expandafter\eeaa\the\fontdimen1\scriptscriptfont1 \ht0 }
\def\accentadjtextback#1{\setbox0\hbox{$#1$}\kern       
                -\expandafter\eeaa\the\fontdimen1\textfont1 \ht0 }
\def\accentadjscriptback#1{\setbox0\hbox{$#1$}\kern     
                -\expandafter\eeaa\the\fontdimen1\scriptfont1 \ht0 }
\def\accentadjscriptscriptback#1{\setbox0\hbox{$#1$}\kern 
                -\expandafter\eeaa\the\fontdimen1\scriptscriptfont1 \ht0 }
\def\itoverline#1{{\mathsurround0pt\mathchoice
        {\rlap{$\accentadjtext{\displaystyle #1}
                \accentadjtext{\vrule height1.593pt}
                \overline{\phantom{\displaystyle #1}
                \accentadjtextback{\displaystyle #1}}$}{#1}}
        {\rlap{$\accentadjtext{\textstyle #1}
                \accentadjtext{\vrule height1.593pt}
                \overline{\phantom{\textstyle #1}
                \accentadjtextback{\textstyle #1}}$}{#1}}
        {\rlap{$\accentadjscript{\scriptstyle #1}
                \accentadjscript{\vrule height1.593pt}
                \overline{\phantom{\scriptstyle #1}
                \accentadjscriptback{\scriptstyle #1}}$}{#1}}
        {\rlap{$\accentadjscriptscript{\scriptscriptstyle #1}
                \accentadjscriptscript{\vrule height1.593pt}
                \overline{\phantom{\scriptscriptstyle #1}
                \accentadjscriptscriptback{\scriptscriptstyle #1}}$}{#1}}}}
\def\itunderline#1{{\mathsurround0pt\mathchoice
        {\rlap{$\underline{\phantom{\displaystyle #1}
                \accentadjtextback{\displaystyle #1}}$}{#1}}
        {\rlap{$\underline{\phantom{\textstyle #1}
                \accentadjtextback{\textstyle #1}}$}{#1}}
        {\rlap{$\underline{\phantom{\scriptstyle #1}
                \accentadjscriptback{\scriptstyle #1}}$}{#1}}
        {\rlap{$\underline{\phantom{\scriptscriptstyle #1}
                \accentadjscriptscriptback{\scriptscriptstyle #1}}$}{#1}}}}
\newdimen\extrawidth
\def\iintlim#1#2{\setbox0\hbox{$\scriptstyle#1$}%
	\setbox1\hbox{$\scriptstyle#2$}%
	\extrawidth=\wd1 \advance\extrawidth-\wd0
	\ifdim\extrawidth<0pt \extrawidth=0pt\fi%
	\int_{#1\kern\extrawidth \kern .5em}^{#2\kern -\wd1} \kern -.5em%
}
\numberwithin{equation}{section}
\newenvironment{ack}{\medskip{\it Acknowledgement.}}{}

\newcommand{\strutdepth}{\hbox{\vrule height0pt depth4.5pt width0pt}}
\newcommand{\strutheight}{\hbox{\vrule height9.5pt depth0pt width0pt}}

\DeclareMathOperator{\Div}{div}

\renewcommand{\phi}{\varphi}
\newcommand{\eps}{\varepsilon}
\newcommand{\al}{\alpha}
\newcommand{\de}{\delta}
\newcommand{\ka}{\kappa}

\newcommand{\R}{\mathbf{R}}
\newcommand{\Z}{\mathbf{Z}}
\newcommand{\Rn}{\mathbf{R}^n}
\newcommand{\simge}{\gtrsim}
\newcommand{\simle}{\lesssim}

\newcommand{\Thetat}{\widetilde{\Theta}}
\newcommand{\thetat}{{\tilde{\theta}}}
\newcommand{\kat}{{\tilde{\ka}}}
\newcommand{\ut}{\tilde{u}}

\newcommand{\p}{{$p\mspace{1mu}$}}

\DeclareMathOperator{\capp}{cap}

\newcommand{\uP}{\itoverline{H}}
\newcommand{\lP}{\itunderline{H}}
\newcommand{\UU}{\mathcal{U}}%
\newcommand{\LL}{\mathcal{L}}%
\newcommand{\bdy}{\partial}
\newcommand{\bdry}{\partial}
\newcommand{\setm}{\setminus}
\renewcommand{\emptyset}{\varnothing}
\newcommand{\ga}{\gamma}

\newcommand{\la}{\lambda}
\newcommand{\wt}{\widetilde{w}}
\newcommand{\vt}{\tilde{v}}
\newcommand{\be}{\beta}
\newcommand{\Thetam}{\Theta_-}
\newcommand{\phit}{\widetilde{\phi}}
\newcommand{\os}[2]{\overset{#1}{#2}}
\begin{document}
%
%
\authortitle{Anders Bj\"orn and Jana Bj\"orn}
{Barriers, Barenblatt solutions and regularity \\ 
of soda can domains for the heat equation \\ 
and nonlinear \p-parabolic equations}
{Barriers and regularity of soda can domains
for  \p-parabolic equations}

\author{
Anders Bj\"orn \\
\it\small Department of Mathematics, Link\"oping University, SE-581 83 Link\"oping, Sweden\\
\it \small anders.bjorn@liu.se, ORCID\/\textup{:} 0000-0002-9677-8321
\\
\\
Jana Bj\"orn \\
\it\small Department of Mathematics, Link\"oping University, SE-581 83 Link\"oping, Sweden\\
\it \small jana.bjorn@liu.se, ORCID\/\textup{:} 0000-0002-1238-6751
}

\date{}
\maketitle

\noindent {\small {\bf Abstract}.
In this paper we study when the 
origin $(0,0)$ is a regular (or
irregular) boundary point for the
so-called soda can domains of the type
\[
\Theta_{l,\theta}:= \{(x,t) \in \R^{n+1}: 0<-t < \theta |x|^l <\theta\},
\quad \text{with $l,\theta >0$,}
\]
for the \p-parabolic equation
$\partial_t u- \Delta_p u=0$, where $1<p<\infty$. 
For $p<2n/{(n+1)}$ 
and for  the heat equation (i.e.\ $p=2$) we completely 
determine when  the origin is regular for soda can domains.
The domains $\Theta_{l,\theta}$ have \emph{nonconvex} time sections 
with power dependence on time.
For domains 
with rotationally symmetric convex time sections with power dependence on time,
the regularity of the origin as the last point was 
characterized by Petrovski\u\i\ (in 1935) for the heat equation,
and almost completely  in the nonlinear case ($p \ne 2$)
in our earlier paper (joint with Gianazza, 
\emph{Math. Ann.} {\bf 368} (2017), 885--904).
}

\medskip

\noindent {\small \emph{Key words and phrases}:
barrier, 
boundary regularity,
heat equation,
Perron method,
\p-Laplacian, 
\p-parabolic equation,
soda can domain.

\medskip

\noindent {\small \emph{Mathematics Subject Classification} (2020):
Primary: 
35K20;  
Secondary: 
35K05, 
35K65, 
35K67, 
35K92. 
}
}


\section{Soda cans}

In 1935, Petrovski\u\i~\cite{Petro2} proved the following result
about regularity of the last point.

\begin{Petrovskii}
The origin $(0,0)$ is regular for the heat equation
\[
\bdry_t u-\Delta u=0    
\]
with respect to the domain
\begin{equation*} 
 \{(x,t) \in \Rn\times(-1,0):
        |x|^2<-Kt \log |{\log(-t)}| \}
\end{equation*}
if and only if\/ $0<K\le4$.
\end{Petrovskii}

A boundary point $\xi_0$ is \emph{regular}
if for every continuous boundary data, the corresponding
solution to the Dirichlet problem 
attains the boundary data as a limit at $\xi_0$,
see Definition~\ref{def:regular}.

In particular, Petrovski\u\i's criterion implies that $(0,0)$ is regular for the domain
\begin{equation}   \label{eq-Petrovskii-set}
\{(x,t)\in \Rn \times (-1,0): -t > \theta |x|^l\},
\quad 
\text{with $l,\theta >0$,}
\end{equation}
if and only if  $0 < l \le 2$.

Parabolic equations in noncylindrical domains describe heat transfer, motion of fluids,
diffusion phenomena and pattern building in domains that change in time, with
applications in physics, chemistry and biology, 
as described in the survey article Knobloch--Krechetnikov~\cite{KnoKre15}.
Such time-changing problems have attracted
a lot of attention in the past years, see e.g.\ the following papers
and  the references therein:
Abdulla~\cite{abdulla03}, \cite{abdulla05},
Bauer~\cite{Bauer62},
Effros--Kazdan~\cite{EfKaz}, Evans--Gariepy~\cite{EvGa},
Lanconelli~\cite{Lanc73},
Landis~\cite{Landis69}, \cite{Landis91},
Petrovski\u\i~\cite{Petro2},
Pini~\cite{pini1}, \cite{pini2} ($n=1$) 
and Watson~\cite{watson78}, \cite{watson}
for the heat equation, and
Bertsch--Dal Passo--Franchi~\cite{BeDPaFr}, 
B\"ogelein--Duzaar--Scheven--Singer~\cite{BogDuzScheSing18},
Byun--Wang~\cite{ByunWang05}, 
Calvo--Novaga--Orlandi~\cite{CaNoOr17},
Guesmia--Harkat~\cite{GueHar18},  
Hofmann--Lewis~\cite{HofLewis96},
Kilpel\"ainen--Lindqvist~\cite{KiLi96}  
and Moring--Scheven--Sch\"atzler~\cite{MSS}
 for nonlinear parabolic  equations.
See also Section~\ref{sect-real-world} for two 
simple  real-world interpretations of soda can domains.
The 
general theory of nonlinear parabolic equations has been developed in the book by 
DiBenedetto~\cite{DiBen93},
see also the book by
DiBenedetto--Gianazza--Vespri~\cite{DiBenGianVesp12}.

In this paper we consider  the  \p-parabolic equation
\begin{equation} \label{eq:para}
\partial_t u- \Delta_p u
    :=\frac{\partial u}{\partial t}- \Div(|\nabla u|^{p-2} \nabla u)
    =0
\quad \text{in } \R^{n+1},
\quad p>1,
\end{equation}
both in the \emph{degenerate} case $p>2$ and the \emph{singular} case $1<p<2$.
It is a nonlinear generalization of the usual heat equation.
The heat equation (i.e.\ $p=2$) is also included in our study.

For the \p-parabolic equation a natural counterpart of
the Petrovski\u\i\ criterion is to study regularity of the origin for 
the rotationally symmetric domains~\eqref{eq-Petrovskii-set}.
Regularity in this case 
was almost completely (except for the case $1<l=p<2$)
characterized
in our paper~\cite[Theorem~1.1]{BBG} (joint with Gianazza).
Lindqvist~\cite{lindqvist95} 
and Bj\"orn--Bj\"orn--Gianazza--Par\-vi\-ain\-en~\cite[Sections~6--7]{BBGP}
contain 
some earlier results in this direction.
Incidentally, \cite[Theorem~1.1]{BBG} also shows that a \p-parabolic
analogue of Effros--Kazdan's tusk condition~\cite{EfKaz}
is not sufficient for regularity when $p>2$.
Moreover, unlike for $p=2$, regularity 
for the origin
in \eqref{eq-Petrovskii-set} holds if and only if
$0<l<p$ when $p>2$.

In this paper we instead consider the complementary
domains of the form
\begin{equation} \label{eq-soda}
    \Theta_{l,\theta} = \{(x,t) \in \R^{n+1}: 0<-t < \theta |x|^l <\theta\}, \quad
\text{with $l,\theta >0$.}
\end{equation}
As in \cite{BBGP}, we call such domains \emph{soda cans} as they (for $l>1$)
resemble the concave inside of the bottom
of a soda (or beer) can.
Just like the Petrovski\u\i\ set~\eqref{eq-Petrovskii-set},
the soda can domains~\eqref{eq-soda}
are rotationally symmetric domains of power type
(but with nonconvex time sections, namely annuli with shrinking
inner radii so that they converge to a punctured ball).
Despite this, they   
do not seem to have been studied much.

In contrast to Petrovski\u\i's criterion for the heat equation,
it was shown in 
Bj\"orn--Bj\"orn--Gianazza~\cite[Proposition~4.1]{BBG}
that   boundary regularity of the origin is independent of 
radial scaling in the $x$-direction 
(or equivalently  in the $t$-direction) when $p \ne 2$.
In particular, the regularity
for $\Theta_{l,\theta}$
(and for the Petrovski\u\i\ set~\eqref{eq-Petrovskii-set}) 
is independent of $\theta$ 
when $p \ne 2$, see Corollary~\ref{cor-indep-theta}.

Note that since regularity is independent of the future 
(see Theorem~\ref{thm-Omminus}),
it is natural to consider regularity
of the origin with respect to domains (or open sets) in 
the lower half-space.
Since regularity is also  a local property (see Section~\ref{sect-bdy-reg}),
the lower bound $t > -\theta$
(or the upper bound $|x|< 1$)
is inessential.

In the positive direction, we obtained 
the following result 
in~\cite{BBGP} when $p \ne 2$.
For $p=2$ it follows from the tusk condition
of Effros--Kazdan~\cite{EfKaz}
(which can also be found in the book by Watson~\cite[Theorem~8.52]{watson}).

\begin{thm} \label{thm-soda-reg}
\textup{(\cite[Proposition~4.2]{BBGP} and~\cite{EfKaz})}
Assume that $l\ge p/(p-1)$ if $1<p \le 2$, and $l>p$ if $p>2$.
Then $(0,0)$  is regular with respect to $\Theta_{l,\theta}$ 
for every $\theta>0$.
\end{thm}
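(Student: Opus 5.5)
We prove regularity by constructing, for each $\theta>0$, a local barrier at $(0,0)$ with respect to $\Theta_{l,\theta}$. For $p=2$ the condition reads $l\ge2$, and we use the tusk condition of Effros--Kazdan~\cite{EfKaz} (cf.\ \cite[Theorem~8.52]{watson}). The complement of $\Theta_{l,\theta}$ near the origin contains the "tusk"
\[
\{(x,t): -t\ge \theta|x|^l\}\supset\{(x,t):|x-0|^2\le c(-t),\ -\delta<t<0\}
\]
when $l\ge 2$. Indeed, $\theta|x|^l\le\theta|x|^2\le -t$ for $|x|\le1$, once $|x|^2\le -t/\theta$. A tusk-shaped (paraboloid) piece of the complement ending at the origin therefore touches $(0,0)$ from below. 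Regularity then follows once we check that this configuration fits the hypotheses of the tusk criterion, possibly after the parabolic rescaling $x\mapsto \lambda x$, $t\mapsto\lambda^2 t$.

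For $p\ne2$ we work directly with a barrier, which by Corollary~\ref{cor-indep-theta} needs to be done only for one convenient $\theta$. The natural candidate is built from the distance to the excluded region, i.e.\ from the quantity $-t-\theta|x|^l$, which vanishes on the lateral boundary. I would try
\[
u(x,t)= \bigl(\theta|x|^l + t\bigr)\text{-based power combined with }|x|^{\be}\text{ and } (-t)^{\gamma},
\]
say $u=|x|^{\be}-c\,(-t)^{\gamma}$ or a separable profile, with exponents tuned so that $\partial_t u-\Delta_p u\ge0$ in $\Theta_{l,\theta}$ near the origin. We also need $u>0$ in $\Theta_{l,\theta}\setminus\{(0,0)\}$ locally and $u\to0$ only at the origin. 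For radial $v=|x|^\be$ the quantity $\Delta_p v$ is a multiple of $|x|^{(\be-1)(p-1)-1}$. Comparing this with the time derivative $(-t)^{\gamma-1}$ on the region $-t<\theta|x|^l$ gives the exponent constraint. The singular case yields the threshold $l\ge p/(p-1)$, and the degenerate case yields $l>p$.

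The remaining step is to verify that the supersolution is strictly positive away from the origin and to handle the closure. We must then combine it with the standard fact that a barrier at $\xi_0$ implies regularity; this is Definition~\ref{def:regular} together with the barrier characterization presumably in Section~\ref{sect-bdy-reg}.

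The main obstacle is the sign of $\partial_t u-\Delta_p u$ near the inner lateral boundary $-t=\theta|x|^l$, where $|x|$ is small relative to $(-t)^{1/l}$ fails. The two terms must be balanced uniformly over the whole time-section annulus. I would first try choosing $\be$ so that $\Delta_p|x|^\be\le0$ (that is, $\be\le (p-n)/(p-1)$, or a logarithm in the borderline case). This handles the spatial part, and a small time term then only needs to be controlled by the comparison $-t\simle|x|^l$, which is exactly where the condition on $l$ enters.
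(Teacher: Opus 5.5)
Your treatment of $p=2$ matches the paper: for $l\ge2$ the complement of $\Theta_{l,\theta}$ contains the paraboloid tusk $\{|x|^2\le -t/\theta,\ -\theta<t<0\}$, and the Effros--Kazdan tusk condition applies. For $p\ne2$, however, there is a genuine gap. You plan to build a single (local) barrier and then invoke ``the standard fact that a barrier at $\xi_0$ implies regularity''. That fact is not available for the \p-parabolic equation. Because of the nonlinearity, $Mw$ need not be \p-superparabolic when $w$ is. Theorem~\ref{thm:barrier-char} characterizes regularity by a \emph{barrier family}, not by a single barrier, and for $1<p<2$ a single barrier can even exist at irregular points \cite[Proposition~1.2]{BBG}. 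The thresholds in the statement come exactly from this issue. For $p>2$, Lemma~\ref{lem-simple-barrier} already gives a single \p-parabolic barrier for every $l\ge p/(p-1)$, yet regularity for $p/(p-1)\le l\le p$ is left open in the paper. So an exponent balance for one function cannot yield ``$l\ge p/(p-1)$ for $p<2$, $l>p$ for $p>2$''. What decides the matter is how the supersolution inequality behaves as the amplitude tends to infinity, and your proposal never addresses this.

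The construction you sketch also fails on its own terms. For $p\le n$, no $|x|^\be$ with $\be>0$ satisfies $\Delta_p|x|^\be\le0$ (Lemma~\ref{lem-Delta-powers}). The admissible choices ($\be\le0$, or a logarithm) do not tend to $0$ at the origin, so $|x|^\be-c(-t)^\ga$ is not a barrier. The time derivative has to compensate a \emph{positive} $\Delta_p$ of the spatial part. This is what the \p-parabolic functions $v_\ka$ in \eqref{eq-vka} do. As in \cite[Proposition~4.2]{BBGP}, they satisfy
\[
v_\ka(x,t)\ge \ka|x|^{p/(p-1)}\Bigl(\frac{p-1}{p}-n\theta\ka^{p-2}\de^{l-\frac{p}{p-1}}\Bigr)
\quad\text{for } t\ge-\theta|x|^l,\ |x|\le\de\le1,
\]
where $l\ge p/(p-1)$ is used. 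For $p<2$, take $\de=1$ and let $\ka\to\infty$. Since $\ka^{p-2}\to0$, the bracket tends to $(p-1)/p$, and $\{v_\ka\}$ is a barrier family. For $p>2$, large $\ka$ makes things worse. Instead, optimize $\ka$ for each $\de$; this gives $v_\ka\ge m_\de\simeq\de^{(p-l)/(p-2)}$ on $|x|=\de$, as in \eqref{eq-def-m-de}. Then use Lemma~\ref{lem-pasting} to take $\min\{v_\ka,m_\de\}$ in $|x|<\de$ and $m_\de$ elsewhere. Since $m_\de\to\infty$ as $\de\to0$ exactly when $l>p$, this gives a barrier family. An ansatz of your form, $j(|x|^\be-(-t)^\be)$ with $\be=1-p/l>0$, does work for $1<p<2$ and $l>p$; this is the proof of Proposition~\ref{prop-l>p-reg}. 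But it works there only as a family, using $j^{p-2}\to0$.
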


For $1<p<2$, we improve the range of $l$'s in Theorem~\ref{thm-soda-reg}
to match $p>2$ as follows.

\begin{prop}   \label{prop-l>p-reg}
Assume that $1<p<2$ and $l>p$.
Then $(0,0)$  is regular with respect to $\Theta_{l,\theta}$ 
for every $\theta>0$.
\end{prop}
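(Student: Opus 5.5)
The plan is to construct a barrier at $(0,0)$ for $\Theta_{l,\theta}$ and then use the barrier characterization of regularity (Section~\ref{sect-bdy-reg}). Since regularity is local and, for $p\ne2$, independent of $\theta$ (Corollary~\ref{cor-indep-theta}), I may take $\theta$ as convenient and work only in a small neighbourhood of the origin. The model is the case $p>2$, $l>p$ of Theorem~\ref{thm-soda-reg} from \cite[Proposition~4.2]{BBGP}. I will adapt that barrier, which for $p<2$ is what produces the restriction $l\ge p/(p-1)$.

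I would look for a supersolution depending on $x$ only through $|x|$ and on $t$ through the scale-invariant combination $\zeta=-t/|x|^l\in(0,\theta)$, multiplied by a power of $|x|$:
\[
  w(x,t)=|x|^{\alpha}\,\phi(\zeta)+\text{(a small correction, e.g.\ $\eps(-t)^{\beta}$ or $|x|^\gamma$)} .
\]
A direct computation gives $\partial_t w=-|x|^{\alpha-l}\phi'(\zeta)$. The term $\Delta_p w$ scales like $|x|^{(\alpha-1)(p-1)-1}$, times an expression in $\phi,\phi',\phi''$ and $\zeta$. The exponents match when $\alpha-l=(\alpha-1)(p-1)-1$, that is, when $\alpha(2-p)=l-p$, so $\alpha=(l-p)/(2-p)$.

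For $1<p<2$ and $l>p$ this gives $\alpha>0$, so $w\to0$ at the origin, and this is precisely where the hypothesis $l>p$ enters. The problem then reduces to an ODE inequality for $\phi$ on $[0,\theta]$, with requirements $\phi>0$ and $\phi'<0$ (so that $\partial_t w\ge0$ helps). Because $l$ and $p$ enter as fixed parameters, I would choose $\phi$ linear or of exponential type, say $\phi(\zeta)=A-\zeta$ or $\phi(\zeta)=e^{-k\zeta}$. I would then choose $\theta$ small (allowed by Corollary~\ref{cor-indep-theta}) so that the $\phi'$ term dominates the terms coming from $\Delta_p$.

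The barrier conditions to verify are the following.
\begin{enumerate}
\item $w$ is a (weak or viscosity) supersolution in $\Theta_{l,\theta}\cap B$. Radial smoothness away from $x=0$ suffices, since $x=0$ is not in the domain.
\item $w>0$ on $\overline{\Theta}_{l,\theta}\setminus\{(0,0)\}$ near the origin.
\item $w\to0$ at $(0,0)$.
\end{enumerate}
Positivity can fail on the part of the boundary $t=0$, $x\neq0$, where $\zeta=0$ and $w=|x|^\alpha\phi(0)$; this is still positive if $\phi(0)>0$. Near $t=0$ it is fine because $\alpha>0$ only enters through $|x|$. On the lateral part $\zeta=\theta$ we need $\phi(\theta)>0$. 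If the definition of barrier in the paper requires strict positivity bounded away from zero off neighbourhoods of the origin, this holds by continuity.

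The main obstacle will be the sign analysis of $\Delta_p w$. The $p$-Laplacian of $|x|^\alpha\phi(\zeta)$ involves $|\nabla w|^{p-2}$, and here $p-2<0$. The gradient has radial components $\alpha|x|^{\alpha-1}\phi$ and $-l\zeta|x|^{\alpha-1}\phi'$, which may nearly cancel. So I must ensure $|\nabla w|$ stays comparable to $|x|^{\alpha-1}$, which requires $\alpha\phi-l\zeta\phi'$ to be bounded away from zero. With $\phi>0$ and $\phi'<0$ this is automatic, since both terms are positive.

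It remains to bound the second-order terms so that $\partial_t w-\Delta_p w\ge0$. I would first try $\phi=e^{-k\zeta}$ with $k$ large and $\theta=1/k$. In that regime $-\phi'=k\phi$ dominates, while the terms of $\Delta_p w$ are of order $k^2\zeta^2$ and $k\zeta$ relative to $\phi$, which are bounded. If the sign of the $\alpha$-dependent terms, such as $(n-1)/r$ contributions, causes trouble, I would add a small extra subsolution-breaking term. Alternatively, I would replace the barrier by a weak barrier family and use the characterization of regularity via a family of barriers from \cite{BBGP}.
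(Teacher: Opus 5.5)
Your self-similar ansatz is sound and does work, but the way you plan to conclude regularity from it does not. Write $g=\alpha\phi-l\zeta\phi'$. Then $\partial_t w-\Delta_p w=|x|^{\alpha-l}E(\zeta)$ with
\[
E=-\phi'-\bigl(n-1+(\alpha-1)(p-1)\bigr)g^{p-1}+(p-1)l\zeta g^{p-2}g'.
\]
For $\phi=e^{-k\zeta}$ on $[0,1/k]$ one has $g=\phi(\alpha+lk\zeta)\ge\alpha/e$ and $g'=k\phi(l-\alpha-lk\zeta)$. So the last two terms are bounded by a constant depending only on $n,p,l$, while $-\phi'\ge k/e$. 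Hence $w$ is a supersolution in all of $\Theta_{l,1/k}$ for large $k$; no localization is needed, since $E$ depends only on $\zeta$.

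The gap is the last step. You conclude from \emph{one} barrier, but Theorem~\ref{thm:barrier-char} needs a barrier family, and for $1<p<2$ a single barrier can exist at an irregular point \cite[Proposition~1.2]{BBG}. Your closing remark about a barrier family does not say where the family would come from. Lemma~\ref{lem-scaling-gen} does not produce one either: with $b=a^l$ one has $A=a^{-\alpha}$, and $w$ is invariant under that rescaling.

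The missing observation is that every multiple $jw$ with $j\ge1$ is again a supersolution, because $p<2$ and $\partial_t w=-|x|^{\alpha-l}\phi'(\zeta)>0$:
\[
\partial_t(jw)-\Delta_p(jw)=j^{p-1}(\partial_t w-\Delta_p w)+(j-j^{p-1})\,\partial_t w\ge0 .
\]
The function $w$ is continuous and positive on each compact set $\{\xi\in\overline{\Theta}_{l,1/k}:|\xi|\ge 1/m\}$ and tends to $0$ at the origin. Hence $\{jw\}_{j=1}^\infty$ is a barrier family, and Corollary~\ref{cor-indep-theta} transfers regularity to every $\theta$.

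This is exactly the mechanism in the paper. It takes $u_j=j\bigl(|x|^{1-p/l}-(-t)^{1-p/l}\bigr)$ with $\theta<1$ and lets the factor $(j\alpha)^{p-2}\to0$ in \eqref{eq-Cu}. There the large multiplier gives both the supersolution property and the family at once. In your route, smallness of $\theta$ gives one supersolution, and the family then comes from multiplying it.
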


In the special case $p>n$, we have a further   
improvement of  Theorem~\ref{thm-soda-reg}.
In particular,  this shows that all soda cans
are regular for all $p$ when $n=1$, see also Remark~\ref{rmk-n=1}.

\begin{prop} \label{prop-soda-p>n}
If $p>n$ and $l,\theta >0$, then 
$(0,0)$  is regular with respect to $\Theta_{l,\theta}$.
\end{prop}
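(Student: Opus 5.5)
Since $p>n$, points have positive \p-capacity, and we exploit this to build a barrier at $(0,0)$ from the behaviour of \p-superharmonic functions near the spatial point $x=0$. The key observation is that the complement of $\Theta_{l,\theta}$ near the origin contains the segment $\{(0,t): -\theta<t\le 0\}$, i.e.\ the line $x=0$ for negative times. For $p>n$ a single point in $\R^n$ is not \p-polar, so this line is ``thick'' for the parabolic problem. Regularity is a local property that is independent of the future (Theorem~\ref{thm-Omminus}). Because $\Theta_{l,\theta}\subset \{(x,t): 0<|x|<1,\ -\theta<t<0\}=:G$, it therefore suffices, by the usual comparison principle for barriers (a barrier for a larger set restricted to a smaller one stays a barrier, provided the origin is a boundary point of both), to show that $(0,0)$ is regular for the cylinder-type set $G=(B(0,1)\setm\{0\})\times(-\theta,0)$.

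For the punctured cylinder we construct an explicit barrier. The elliptic candidate is $v(x)=|x|^{(p-n)/(p-1)}$, which is \p-harmonic in $\Rn\setm\{0\}$, positive there, and vanishes at $x=0$ since $p>n$. We want a supersolution of $\partial_t u-\Delta_p u\ge 0$ in $G$ that is positive on $\overline{G}\setm\{(0,0)\}$ and tends to $0$ at $(0,0)$. We take
\[
w(x,t)= v(x) + h(t),
\]
with $h(t)$ chosen positive for $t<0$, increasing in $|t|$, and with $h(0)=0$. Then $\Delta_p w=\Delta_p v=0$ away from $x=0$, while $\partial_t w=h'(t)$. Since $h$ must be positive for $t<0$ and vanish at $0$, it is decreasing in $t$, so $h'\le 0$, which has the wrong sign. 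To fix this we use instead a time-dependent multiple, $w=v(x)-ct$ with $c>0$: then $\partial_t w=-c<0$, again the wrong sign. So we have to perturb the elliptic part. We take $w=\phi(v(x))-ct$ with $\phi$ concave and increasing. Then $-\Delta_p\phi(v)=-(p-1)\phi'(v)^{p-2}\phi''(v)|\nabla v|^p\ge 0$, and we need this quantity to dominate $c$. Because $|\nabla v|^p\sim |x|^{p(1-n)/(p-1)}$, the concavity term is large near $x=0$ but bounded below only on bounded sets. We therefore choose $\phi(s)=s-As^2$ on $[0,1]$ with $A$ small (so that $\phi$ stays increasing), and then take $c$ small relative to $A\inf_{B(0,1)}|\nabla v|^p>0$. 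This infimum is attained near $|x|=1$ and is positive.

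Finally we check the barrier properties. The function $w$ is continuous on $\overline G$ and positive except at $(0,0)$, where it vanishes. It is \p-superparabolic in $G$; this is a classical computation, since $w$ is smooth with nonvanishing gradient there. Standard barrier characterizations of regularity (see Section~\ref{sect-bdy-reg}) then give regularity of $(0,0)$ for $G$, hence for $\Theta_{l,\theta}$ by the restriction argument. The main obstacle I expect is the precise form of the barrier criterion available in the \p-parabolic setting. Because the equation is not invariant under adding constants times scaling, one may need a family of barriers, one for each neighbourhood, or strict positivity bounds away from the point. If that is the case, I would instead rescale $w$ via the homogeneity $u\mapsto \la u(x,\la^{p-2}t)$ to generate such a family. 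I would also double-check the sign conventions at $x=0$, where $w$ is merely Hölder and $G$ excludes the line, so no supersolution condition is needed there.
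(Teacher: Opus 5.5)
Your construction works, but the step ``standard barrier characterizations then give regularity'' is not valid as stated. Theorem~\ref{thm:barrier-char} requires a barrier \emph{family}, and for the \p-parabolic equation a single barrier does not in general imply regularity; the paper recalls that for $1<p<2$ one barrier can exist at irregular points. So your fallback is not optional, but it does go through. Your $w=\phi(v(x))-ct$ is a classical supersolution on $(B(0,1)\setm\{0\})\times\R$, since the formula does not depend on the time range. This matters for $p>2$, where $\la^{p-2}t$ leaves $(-\theta,0)$. The rescaled functions
\[
w_\la(x,t)=\la\, w(x,\la^{p-2}t)=\la\,\phi\bigl(|x|^{(p-n)/(p-1)}\bigr)-c\la^{p-1}t
\]
are therefore supersolutions, and they satisfy $w_\la\ge \la(1-A)|x|^{(p-n)/(p-1)}+c\la^{p-1}|t|$ for $t\le 0$. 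Both coefficients tend to $\infty$, so $\la=j$ gives a barrier family in the sense of Definition~\ref{def-barrier}. Restricting to $\Theta_{l,\theta}$ is harmless because this lower bound holds throughout the punctured cylinder, not just on its boundary. You also need $A<\tfrac12$ strictly, so that $\phi'(v)^{p-2}$ is bounded below on $[0,1]$ for $p\ge2$ as well as $p<2$.

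With this fix, your route differs from the paper's. The paper solves the radial ODE $\Delta_p u_j=-j$ exactly in the punctured ball with $u_j(0)=0$. This is possible because $p>n$ makes $\int_0^1\rho^{(1-n)/(p-1)}\,d\rho$ finite. It then uses the elliptic comparison lemma of Heinonen--Kilpel\"ainen--Martio (or concavity when $n=1$) to get $u_j\ge j|x|$, and obtains genuinely \p-parabolic barriers $u_j-jt$. You instead compose the \p-harmonic function $|x|^{(p-n)/(p-1)}$ with a concave $\phi$. Here $p>n$ gives a positive exponent and hence continuity at $0$. The identity $\Delta_p\phi(v)=(p-1)\phi'(v)^{p-2}\phi''(v)|\nabla v|^p$ then absorbs the time term. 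This avoids the ODE and the comparison lemma, and makes positivity evident. It is essentially the ``more explicit barrier family'' suggested in the remark after the paper's proof, with $|x|^\beta-A|x|^{2\beta}$ in place of $a|x|^{(p-n)/(p-1)}-b|x|^{p/(p-1)}$. The price is that your barriers are only supersolutions, which is all Theorem~\ref{thm:barrier-char} requires.
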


In Kilpel\"ainen--Lindqvist~\cite[p.~675]{KiLi96} it is claimed that an exterior ball
touching a boundary point from the past
(at the north pole of the ball) 
is sufficient for regularity
for all $p>1$
and, in particular, that the function $w$ constructed in~\cite[p.~674]{KiLi96} is a barrier
for large $\al$.
However, the argument in~\cite[p.~674--675]{KiLi96} assumes that $\de>0$ in that construction, which is not
the case when the contact point is the north pole.
On the other hand, for 
$p>2$ and sufficiently small $\al>0$, 
the same function $w$ from~\cite[p.~674]{KiLi96} 
provides one (local) barrier since 
\[
\Delta_p w \le (2\al)^{p-1}(n +p-2) e^{-(p-1)\al R^2} 
\le 2\al e^{-\al R^2} \tfrac12 R_0 \le \partial_t w. 
\]
Unfortunately, one barrier by itself does not guarantee regularity, cf.\ Theorem~\ref{thm:barrier-char}.
In fact, at least when $1<p<2$, one barrier can exist even at irregular boundary points, as shown in 
Bj\"orn--Bj\"orn--Gianazza~\cite[Proposition~1.2]{BBG}.
Neither the construction in~\cite[p.~674]{KiLi96}  nor its modification in \cite[Proposition~4.1]{BBGP}
gives a barrier family 
when the exterior ball touches the boundary point from the past
(at the north pole)
and therefore
does not imply regularity in that geometric situation.

For $p=2$, it follows from the tusk condition
of Effros--Kazdan~\cite{EfKaz} that 
the north pole exterior ball condition implies regularity.
This corresponds to $l=2$ in \eqref{eq-soda}.
The soda can domains, considered in this paper, thus
satisfy an exterior generalized paraboloid condition from the past.
For the heat equation ($p=2$) our regularity results in 
Propositions~\ref{prop-irreg-EG}\ref{reg-n=2} and~\ref{prop-Phi-rho}
are stronger than the north pole exterior ball condition.
In particular, any power-type cusp from the past is sufficient 
for regularity when $n=2$.
Proposition~\ref{prop-l>p-reg} implies that the north pole exterior ball condition 
is sufficient for regularity when $1<p<2$.

In the negative direction, the following result shows
irregularity for some $l>0$ when
$1<p<2\le n$.  
We are not aware of any irregularity results for $p>2$ for 
soda can domains.

\begin{prop} \label{prop-soda-irreg}
Assume that $1<p<2\le n$ and 
\begin{alignat*}{2}
l &\le p, &\quad&  \text{if }p<\frac{2n}{n+1}, \\
    l &< \frac{(n-p)(2-p)}{p-1},
    &\quad & \text{if } p \ge  \frac{2n}{n+1}.
\end{alignat*}
Then $(0,0)$  is irregular with respect to 
$\Theta_{l,\theta}$  for every $\theta>0$.
\end{prop}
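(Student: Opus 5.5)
To prove irregularity of $(0,0)$ I will construct a comparison function that stays bounded away from the boundary data at the origin. Concretely, I will take boundary data $f$ that vanishes near the "outer" part and equals $1$ on the lateral inner boundary $\{-t=\theta|x|^l\}$ near the origin. I then aim for a supersolution $v$ in $\Theta_{l,\theta}$ (at least near the origin) with $v\ge f$ on the boundary and $\liminf_{(x,t)\to(0,0)} v<1$, or dually a subsolution. The natural building block is the singular case Barenblatt solution
\[
\mathcal{B}(x,t)=t^{-k}\bigl(C+\gamma |x|^{p/(p-1)} t^{-kp/(n(p-1))}\bigr)^{-(p-1)/(2-p)},\qquad k=\frac{n}{n(p-2)+p},
\]
which for $p<2n/(n+1)$ has $k>1$ and behaves like $|x|^{-p/(2-p)}$ in space. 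Alternatively I will use the stationary $p$-harmonic fundamental solution $|x|^{(p-n)/(p-1)}$ combined with time-dependent factors. Since $p<2\le n$, the latter is singular at $x=0$, which is what makes the inner hole (whose radius is $(-t/\theta)^{1/l}$) too thin to be seen.

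First, by Corollary~\ref{cor-indep-theta} (independence of $\theta$ for $p\ne2$) and locality/independence of the future (Theorem~\ref{thm-Omminus}), it suffices to treat one convenient $\theta$ and a small neighbourhood. Second, I will use the barrier characterization (Theorem~\ref{thm:barrier-char}), or directly Definition~\ref{def:regular} together with comparison, to reduce irregularity to exhibiting a bounded solution of a Dirichlet problem with continuous data that fails to attain its value at $(0,0)$. I will build an explicit supersolution $v$ of the form
\[
v(x,t)=a\,|x|^{-\beta}(-t)^{\gamma}+b\,\psi(t)\qquad\text{or}\qquad v=\min\{1,\text{shifted Barenblatt}\},
\]
with $\beta=(n-p)/(p-1)$ in the range $p\ge 2n/(n+1)$, and a Barenblatt-type profile started at time $t=0$ (after time reversal in the comparison) when $p<2n/(n+1)$. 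The exponents are chosen so that on the inner boundary $|x|=(-t/\theta)^{1/l}$ one has $v\ge1$. Meanwhile, along points $(x,t)\to(0,0)$ with $|x|$ fixed multiple of something larger (e.g.\ $|x|=(-t)^{1/p}$), one has $v\to0$. Then comparison gives that the Perron solution with data $f$ stays below $v$, so it does not tend to $f(0,0)=1$.

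The key computation is checking $\partial_t v-\Delta_p v\ge0$. For the radial power $|x|^{-\beta}$ the $p$-Laplacian vanishes exactly when $\beta=(n-p)/(p-1)$. Multiplying by $(-t)^\gamma$, we need $-\gamma(-t)^{\gamma-1}|x|^{-\beta}$ absorbed, which forces a correction term. Balancing the degree $(-t)^{\gamma(p-1)}|x|^{-\beta(p-1)-p}$ against $(-t)^{\gamma-1}|x|^{-\beta}$ on the inner boundary $|x|^l\sim -t$ yields precisely a condition of the form $l<\beta(2-p)=(n-p)(2-p)/(p-1)$. I expect this to be the condition in the second case. In the first case the Barenblatt profile has spatial decay $|x|^{-p/(2-p)}$ and time scaling $|x|^p\sim t$, giving the borderline $l\le p$.

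The main obstacle will be making the supersolution genuinely valid near $x=0$ and matching the boundary inequalities uniformly, including the equality case $l=p$ in the first regime. There the scaling is critical, and I will need the explicit self-similar Barenblatt solution, whose existence for $p<2n/(n+1)$ relies on the extinction/finite-mass features of that range. I will first try a time-reversed, translated Barenblatt solution $\mathcal{B}(x,T-t)$ truncated by $\min\{1,\cdot\}$ (a minimum of supersolutions is a supersolution). I will tune $T\downarrow0$ and the constant $C$ so that the profile exceeds $1$ inside $|x|^p\lesssim -t$ but stays small at $|x|$ comparable to $1$.
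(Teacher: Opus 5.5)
Your overall plan (a supersolution lying above continuous data near $(0,0)$ but not approaching $f(0,0)$, followed by comparison) is the same as the paper's. However, the supersolution is the entire content of the proof, and neither of your candidates works.

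For $v=a|x|^{(p-n)/(p-1)}(-t)^{\ga}+b\psi(t)$, the spatial factor is $p$-harmonic, and adding a function of $t$ does not change $\nabla v$. So $\Delta_p v\equiv0$, and the supersolution condition reduces to $\partial_t v\ge0$. Each vertical segment $\{x\}\times(-\theta|x|^l,0)$ lies in $\Theta_{l,\theta}$ and starts on the inner boundary. Hence $v\ge1$ on the inner boundary forces $v\ge1$ throughout $\Theta_{l,\theta}$ near the origin, so irregularity can never be detected. There is no $\Delta_p$ term to ``balance'' $\partial_t v$ against.

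The Barenblatt route fails too. The map $t\mapsto T-t$ is not a symmetry of $\partial_t u=\Delta_p u$: the reversed function satisfies $\partial_t w-\Delta_p w=-2\Delta_p w$, which changes sign. Moreover, for $p<2n/(n+1)$ one has $n(p-2)+p<0$, so $k<0$ rather than $k>1$, and the Barenblatt solution does not exist there. Relatedly, the two cases in the statement do not come from two different constructions.

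The missing idea is a \emph{strictly} $p$-superharmonic spatial profile combined with a scaling that is only available for $p<2$. With $\theta=1$ (Corollary~\ref{cor-indep-theta}), the paper takes
\[
u(x,t)=C\biggl(\frac{-t}{|x|^l}\biggr)^{\be}, \qquad \be=\frac{1}{2-p}.
\]
This equals $C$ on the inner boundary and tends to $0$ as $t\to0-$ for fixed $x\ne0$. By Lemma~\ref{lem-Delta-powers}, $\Delta_p u<0$ with the factor $\de=n-(\be l+1)(p-1)-1$, which is positive exactly when $l<(n-p)(2-p)/(p-1)$. Since $\be(p-2)=-1$, the powers of $-t$ cancel and
\[
\partial_t u-\Delta_p u=C(-t)^{\be-1}|x|^{-\be l}\bigl(-\be+C^{p-2}\de(\be l)^{p-1}|x|^{l-p}\bigr).
\]
Here $|x|^{l-p}\ge1$ requires $l\le p$, and $p<2$ lets you choose $C$ small so that the bracket is nonnegative. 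Both conditions are used simultaneously. The case split only records which one binds, since $p<2n/(n+1)$ if and only if $p<(n-p)(2-p)/(p-1)$.

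Finally, take $f=u$ for $t<0$ and $f=C(1-|x|)$ on $t=0$. Theorem~\ref{thm-parabolic-comparison} with $T=0$ ignores the top, where $f$ is close to $f(0,0)$ but $u\to0$. Your requirement ``$v\ge f$ on the boundary'' must be relaxed in exactly this way. The comparison gives $\lP f\le u$, hence $\liminf_{\Theta_{l,1}\ni\xi\to(0,0)}\lP f(\xi)=0<C=f(0,0)$.
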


Note that $l \le p$ for both ranges of $p$ in Proposition~\ref{prop-soda-irreg}.
The range $p>2n/(n+1)$ is exactly those $p$,
for which $\la>0$ in the Barenblatt solution, which therefore exists (when $p \ne 2$).
The exponent $p=2n/(n+1)$ is also a known critical exponent
for various types of behaviour for the \p-parabolic equation,
see e.g.\ DiBenedetto--Gianazza--Vespri~\cite[Section~6.21.4]{DiBenGianVesp12}.
We do not know if it is a critical exponent also
for the regularity of soda cans, or just an artefact of our proof.
The fact that $l\to0$ as $p\nearrow2$ in Proposition~\ref{prop-soda-irreg},
while all $0<l<p$ give irregularity in Proposition~\ref{prop-irreg-EG} 
for $p=2$ and $n \ge 3$, might suggest
that our proof is not optimal.

For the heat equation (i.e.\ $p=2$) we have
the following complete characterization when $n \ge 2$.
In Proposition~\ref{prop-Phi-rho}
we give a similar characterization for generalized soda cans
(not necessarily of power type) in $\Rn$, $n \ge 3$.

\begin{prop}  \label{prop-irreg-EG}
Assume that $p=2$  and $\theta,l>0$.
\begin{enumerate}
\item \label{reg-n=2}
If $n=2$, then $(0,0)$  is regular for the heat equation with respect to 
$\Theta_{l,\theta}$  for all $l,\theta>0$, but it is  irregular for 
the punctured cylinder $(B(0,r)\setm\{0\}) \times (-1,0)$,
where $B(0,r)=\{x \in \R^2 : |x|<r\}$.
\item \label{reg-n>=3}
If $n \ge 3$, then $(0,0)$  is regular for the heat equation with respect to 
$\Theta_{l,\theta}$  if and only if $l \ge 2$.
\end{enumerate}
\end{prop}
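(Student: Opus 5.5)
For the heat equation I would use the Wiener-type criterion of Evans--Gariepy~\cite{EvGa}. The origin is regular for an open set $\Omega$ if and only if
\[
\sum_{k} 2^{kn/2}\capp\bigl(A_k\bigr)=\infty,
\]
where $A_k$ is the part of the complement $\R^{n+1}\setm\Omega$ lying in the heat ball annulus $E(4^{-k})\setm E(4^{-k-1})$, and $\capp$ is thermal capacity. Regularity is local and independent of the future (Theorem~\ref{thm-Omminus}), so only the part of the complement in $\{t<0\}$ near the origin matters. There this part is the closed set $\{(x,t): -t\ge\theta|x|^l\}$, a ``Petrovski\u\i\ set'' of shape \eqref{eq-Petrovskii-set} together with the segment $\{0\}\times(-1,0)$. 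The plan is to estimate capacities of the pieces of this complement at scale $r=2^{-k}$, that is, for $-t\sim r^2$ and $|x|\lesssim r$.

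Case $l\ge2$, any $n\ge2$: I would not compute capacities here. For $l>2$ the point is regular by Theorem~\ref{thm-soda-reg} with $p=2$. For $l=2$, $\Theta_{2,\theta}$ lies outside the exterior paraboloid $\{-t\ge\theta|x|^2\}$. This is exactly the Effros--Kazdan tusk condition, which gives regularity for every $\theta$.

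Case $l<2$: at time $-t\sim r^2$ the complement is the ball $|x|\le(-t/\theta)^{1/l}$, whose radius $\rho\sim r^{2/l}$ is much smaller than $r$. So each piece $A_k$ is essentially a thin cylinder $B(0,\rho_k)\times(-r^2,-r^2/4)$ with $\rho_k=o(r)$. For $n\ge3$ I would compare thermal capacity with elliptic (Newtonian) capacity via the time-integrated heat kernel. A thin cylinder of time length $r^2$ has thermal capacity comparable to $r^2\rho^{n-2}$ (capacity density $\rho^{n-2}$ per unit time). The Wiener term is then
\[
2^{kn/2}\capp(A_k)\sim r^{-n}r^2 r^{(2/l)(n-2)}=r^{(n-2)(2/l-1)}.
\]
Summed over $r=2^{-k}$ this is a convergent geometric series, since $2/l>1$. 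This gives irregularity for $l<2$, $n\ge3$, and hence part~\ref{reg-n>=3}.

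For $n=2$ I would show regularity for all $l$, and irregularity of the punctured cylinder. Logarithmic capacity behaves like $1/\log(r/\rho)$. So the time-slab $B(0,\rho)\times(-r^2,-r^2/4)$ should have thermal capacity comparable to $r^2/\log(r/\rho)$, and the Wiener term is about $1/\log(r/\rho_k)\sim 1/(k(2/l-1))$. This diverges like the harmonic series, so the origin is regular. For the punctured cylinder the complement near the origin is the segment $\{0\}\times(-r^2,0)$. A segment has zero thermal capacity in $n=2$ (points are polar for the 2D Laplacian), so every term vanishes and the origin is irregular. As a direct alternative, the $t$-axis is removable for bounded solutions, so boundary data at $(0,t)$, $t<0$, are not attained.

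The main obstacle is the two-sided capacity estimate for thin cylinders, $\capp\sim r^2\rho^{n-2}$ for $n\ge3$ and $r^2/\log(r/\rho)$ for $n=2$, valid uniformly as $\rho/r\to0$. For the upper bound I would test with the equilibrium potential built from a uniform measure on the cylinder and bound the heat potential using $\int_0^\infty G(x,s)\,ds\sim|x|^{2-n}$. The truncation of the time integral at $r^2$ produces exactly the logarithm when $n=2$. For the lower bound I would use a measure concentrated on the set, with total mass $r^2\rho^{n-2}$ (resp.\ $r^2/\log$), whose potential is bounded by $1$. A fallback would be explicit barriers and subsolutions built from the fundamental solution integrated along the $t$-axis.
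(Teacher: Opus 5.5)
Your proposal is correct and follows essentially the paper's route: the Lanconelli/Evans--Gariepy Wiener criterion, the thin-cylinder estimates $\capp(\overline{B(0,\rho)}\times[0,h])\simeq h\rho^{n-2}$ for $n\ge3$ and $\simge h/(1+\log(h/\rho^2))$ for $n=2$ (when $h\ge\rho^2$), and Theorem~\ref{thm-soda-reg}/the tusk condition for $l\ge2$. The differences are only in details:
\begin{itemize}
\item The paper gets the upper bound by subadditivity over about $h/\rho^2$ cylinders of size $\rho\times\rho^2$. Your ``uniform measure'' sketch would need a duality (adjoint-potential) argument to yield an upper bound rather than a lower one.
\item The paper proves irregularity for the punctured cylinder with the explicit superparabolic majorant $\min\{\eps\log(1/|x|),1\}$, not via polarity of the segment.
\item With annuli at time scale $4^{-k}$ the Wiener weight is $2^{kn}=r^{-n}$, which is what your computation actually uses, not $2^{kn/2}$.
\end{itemize}
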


In order to prove Proposition~\ref{prop-irreg-EG}
(and~\ref{prop-Phi-rho})
we will use the 
Wiener  criterion for the heat equation from 
Lanconelli~\cite{Lanc73} (necessity) and
Evans--Gariepy~\cite{EvGa} (sufficiency).
There
is also a different Wiener criterion
for the heat equation
by  Landis~\cite{Landis69}, \cite{Landis91}.
Nevertheless, none of these Wiener type criteria 
seems to have been used much for concrete domains.
For $p\ne2$, no Wiener type criterion is known.

For $p=2$ (i.e.\ the heat equation), Abdulla~\cite[Theorem~1.1]{abdulla05} 
characterized the regularity of the last point
for arbitrary bounded rotationally symmetric domains with convex
time sections (i.e.\ the time sections are balls), and thereby generalized Petrovski\u\i's criterion.
Note that  the time sections of soda cans are rotationally
symmetric \emph{nonconvex} open sets (annuli), 
and they are therefore not covered by
Abdulla's result.
Another sufficient condition 
 in terms of an exterior cusp 
(different from the tusk condition) was given in Abdulla~\cite{abdulla03},  
but it also does not apply to the soda can domains.

For $p>2$ and $p/(p-1) \le l \le p$ we have only been able
to show regularity for small boundary data in the following form.
We include $l >p$ in the formulation, which leads to another proof 
of the case $p>2$ in Theorem~\ref{thm-soda-reg}, 
cf.\ Corollary~\ref{cor-barrier-family-Barenblatt}.
See Definition~\ref{def-barrier} and
Remark~\ref{rmk-trad-barrier} for the 
definition and 
role of barriers.

\begin{prop}  \label{prop-small-reg}
Assume that $p>2$.
Let $l\ge p/(p-1)$ and $\theta>0$. 
Then $(0,0)$ is a regular boundary point for 
$\Theta_{l,\theta}$ with small boundary data.
Namely,  if $0<\de\le 1$ and 
\begin{equation}  \label{eq-bound-f-ka}
|f-f(0,0)| \le \frac{p-2}{p} \biggl( \frac{\de^{\frac{p}{p-1}-l}}{np\theta} \biggr)^{1/(p-2)} 
\min\{|x|,\de\}^{p/(p-1)}
\end{equation}
on $\bdy \Theta_{l,\theta}$, then
\begin{equation} \label{eq-lim-Hf}
\lim_{\Theta_{l,\theta} \ni \xi \to (0,0)} \lP_{\Theta_{l,\theta}} f(\xi)  
=  \lim_{\Theta_{l,\theta} \ni \xi \to (0,0)} \uP_{\Theta_{l,\theta}} f(\xi) 
= f(0,0).
\end{equation}

Moreover, there is a barrier for $\Theta_{l,\theta}$ at $(0,0)$.
\end{prop}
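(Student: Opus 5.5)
The plan is to build an explicit supersolution from the observation that $q:=p/(p-1)$ makes $|x|^q$ have constant \p-Laplacian. Since $(q-1)(p-1)=1$, we get
\[
|\nabla |x|^q|^{p-2}\nabla|x|^q = q^{p-1}x,
\qquad\text{so}\qquad
\Delta_p (c|x|^q)= n(cq)^{p-1}.
\]
I would therefore consider
\[
w(x,t)= c\min\{|x|,\de\}^q + k t, \qquad k:= n(cq)^{p-1}.
\]
Both $c|x|^q+kt$ and $c\de^q+kt$ satisfy $\partial_t w - \Delta_p w \ge 0$; for the second this holds because $k\ge0$. Hence their minimum $w$ is a continuous supersolution in $\R^{n+1}$.

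The first key step is positivity of $w$ in $\Theta_{l,\theta}$ and on its lateral boundary $t=-\theta|x|^l$ away from the origin. There $kt\ge -k\theta|x|^l$, and since $l\ge q$ we have $|x|^l\le |x|^q\de^{l-q}$ for $|x|\le\de$. Writing $m=\min\{|x|,\de\}$, this gives
\[
w \ge c m^q - n\theta q^{p-1}c^{p-1}\de^{l-q} m^q .
\]
The case $|x|>\de$ is handled similarly using $|x|<1$ and $\de\le 1$. Next I would take $c$ so that the subtracted term is a fixed fraction of $cm^q$; this is possible because $p>2$ makes $c^{p-1}$ small relative to $c$ for small $c$. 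Optimizing the product $c(1-\text{fraction})$ over $c$ is where I expect the constant $\frac{p-2}{p}\bigl(\de^{q-l}/(np\theta)\bigr)^{1/(p-2)}$ of \eqref{eq-bound-f-ka} to appear; note that $q^{p-1}\cdot q^{-?}$ combines with $n\theta$ to give $np\theta$. The result is $w\ge C_0 m^q$ on $\bdy\Theta_{l,\theta}$, with $C_0$ the constant in \eqref{eq-bound-f-ka}. I expect this bookkeeping, and checking that the bottom $t=-\theta$ and the side $|x|=1$ are also covered (with $m=\de$ there), to be the main technical obstacle rather than a conceptual one.

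With this in hand, \eqref{eq-bound-f-ka} gives $f(0,0)-w\le f\le f(0,0)+w$ on $\bdy\Theta_{l,\theta}$. Since the equation is invariant under adding constants and under $u\mapsto -u$, the function $f(0,0)+w$ is an admissible upper function and $f(0,0)-w$ is a lower function. Comparison in the Perron method then yields
\[
f(0,0)-w\le \lP f\le \uP f\le f(0,0)+w \quad\text{in }\Theta_{l,\theta}.
\]
Because $w$ is continuous with $w(0,0)=0$, letting $\xi\to(0,0)$ gives \eqref{eq-lim-Hf}.

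Finally, for the barrier statement, $w$ (with, say, $\de=1$) is a positive continuous supersolution in $\Theta_{l,\theta}$. It satisfies $\liminf w>0$ at every boundary point other than the origin and $w\to0$ at $(0,0)$, so it is a barrier in the sense of Definition~\ref{def-barrier}. At boundary points with $t=-\theta|x|^l$, this positivity follows from the strict bound $w\ge C_0m^q>0$. The comparison-based reasoning shows why only small data can be treated: a single $w$ cannot be rescaled arbitrarily, since the equation is not homogeneous of degree one in $u$ for $p\neq2$.
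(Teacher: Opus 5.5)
You have the right building block, and your inner-region computation matches the paper. Setting $\ka:=cq$ (with $q=p/(p-1)$ as in your notation), the function $c|x|^q+kt$ is exactly the \p-parabolic function $v_\ka$ of \eqref{eq-vka}. Maximizing $c-n\theta(cq)^{p-1}\de^{l-q}$ forces $\ka=(\de^{q-l}/(np\theta))^{1/(p-2)}$ and gives $C_0=\frac{p-2}{p}\ka$.

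\textbf{The gap.} The region $|x|>\de$, which you say is ``handled similarly'', is not. There $w=c\de^q+kt$ keeps decreasing in $t$. With the forced value of $c$, on the lower boundary you get
\[
w(x,-\theta|x|^l)=c\de^q-n\theta\ka^{p-1}|x|^l=\frac{\ka\de^q}{p}\biggl(p-1-\Bigl(\frac{|x|}{\de}\Bigr)^{l}\biggr).
\]
This is strictly less than $C_0\de^q$ for every $\de<|x|\le 1$, and it is negative when $|x|^l>(p-1)\de^l$. Consequently:
\begin{itemize}
\item for $\de<1$, $f(0,0)+w$ is not in the upper class for data saturating \eqref{eq-bound-f-ka};
\item for small $\de$, your $w$ is not even positive in $\Theta_{l,\theta}$.
\end{itemize}
Your argument does work for $\de=1$. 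That gives the barrier and Corollary~\ref{cor-reg-l=p/(p-1)}. It also gives the whole statement when $l=p/(p-1)$, since then the hypothesis with $\de<1$ implies the one with $\de=1$. For $l>p/(p-1)$, however, the small-$\de$ hypothesis allows a larger coefficient near the origin and is not covered by $\de=1$. This is exactly the case ($l=p$, $\de\to 0$) that Corollary~\ref{cor-reg-l=p} relies on.

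\textbf{The missing idea.} Outside $B(0,\de)$, use a time-independent constant and glue it on with the pasting Lemma~\ref{lem-pasting}. Let $m_\de:=C_0\de^q$, and set $\vt_\de=\min\{v_\ka,m_\de\}$ for $|x|<\de$ and $\vt_\de=m_\de$ for $|x|\ge\de$. Your inner estimate at $|x|=\de$ gives $v_\ka\ge m_\de$ there. So $\vt_\de$ is lower semicontinuous, hence \p-superparabolic in $\{0<-t<\theta|x|^l\}$. It also satisfies $\vt_\de\ge C_0\min\{|x|,\de\}^q$ on $\bdy\Theta_{l,\theta}$.

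The global minimum $\min\{v_\ka,m_\de\}$ would not work either when $l>p/(p-1)$ and $\de$ is small. Indeed, $v_\ka(x,-\theta|x|^l)=\frac{\ka|x|^q}{p}\bigl(p-1-(|x|/\de)^{l-q}\bigr)$ becomes negative near $|x|=1$. The pasting lemma is precisely what lets you discard $v_\ka$ outside the $\de$-ball. With $\vt_\de$ in place of $w$, your Perron comparison goes through unchanged.
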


Note that when $l \ge 2$,  the exterior ball condition 
is satisfied from the past. 
However, this is not
enough to imply regularity, 
see the discussion after Theorem~\ref{thm-soda-reg} above.

We actually give two proofs of Proposition~\ref{prop-small-reg},
one based on power type barriers and one on Barenblatt type barriers,
see Sections~\ref{sect-trad-bar}--\ref{sect-Baren}.
We have included the rather technical proof based on Barenblatt solutions
because it seems to be a natural approach, even though it does not give
a better result than the shorter proof in Section~\ref{sect-trad-bar}.
The fact that both  constructions give exactly the same bound
in \eqref{eq-bound-f-ka} for $p/(p-1) \le l \le p$
when $p>2$
(and thus also lead to the same bound $M_\theta$
for the partial regularity when $l=p>2$ in Corollary~\ref{cor-reg-l=p} below)
perhaps suggests that there might
be some deeper reason behind this restriction.
We have however not been able to find an example showing
irregularity for large boundary data when $p/(p-1) \le l \le p$  and $p>2$. 
Recall that for $l>p$, the origin is regular for $\Theta_{l,\theta}$ by
Theorem~\ref{thm-soda-reg}.

Note that for $l>p/(p-1)$, the coefficient in front of $\min$ in \eqref{eq-bound-f-ka}
increases as $\de\to0$ and hence allows ``steeper'' boundary data at $0$.
At the same time, smaller $\de$ put more of a restriction 
on the maximal
size of the boundary data (when $p/(p-1) \le l <p$).
For $l=p/(p-1)$, the coefficient in front of $\min$ in \eqref{eq-bound-f-ka}
is independent of $\de$ and the best choice is thus to take $\de=1$.
This immediately gives the following corollary.

\begin{cor}  \label{cor-reg-l=p/(p-1)}
Assume that $p>2$ and $\theta>0$.
If  $l= p/(p-1)$, then
\eqref{eq-lim-Hf} holds for every
boundary data satisfying
\begin{equation*} 
|f(x,t)-f(0,0)| \le M_\theta   |x|^{p/(p-1)}
\quad \text{for } (x,t) \in \bdy \Theta_{l,\theta},
\end{equation*}
where
\begin{equation}   \label{eq-M-theta}
M_\theta:=\frac {p-2}{p} \biggl(\frac{1}{np\theta}\biggr)^{1/(p-2)}.
\end{equation}
\end{cor}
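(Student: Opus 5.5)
This corollary should be a direct specialization of Proposition~\ref{prop-small-reg}. Take $l=p/(p-1)$, so that $l \ge p/(p-1)$ holds trivially, and choose $\de=1$. Then $\de^{\frac{p}{p-1}-l}=1^0=1$. The coefficient in front of $\min$ in \eqref{eq-bound-f-ka} therefore reduces to
\[
\frac{p-2}{p}\biggl(\frac{1}{np\theta}\biggr)^{1/(p-2)} = M_\theta .
\]
With this choice, the condition \eqref{eq-bound-f-ka} becomes $|f-f(0,0)|\le M_\theta \min\{|x|,1\}^{p/(p-1)}$ on $\bdy\Theta_{l,\theta}$.

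It remains to see that the hypothesis of the corollary implies this condition. The key point is that every boundary point $(x,t)\in\bdy\Theta_{l,\theta}$ satisfies $|x|\le 1$. Indeed, on $\overline{\Theta}_{l,\theta}$ we have $\theta|x|^l\le\theta$, since the definition \eqref{eq-soda} requires $\theta|x|^l<\theta$ in $\Theta_{l,\theta}$ and the closure only relaxes this to a non-strict inequality. Hence $\min\{|x|,1\}=|x|$ on the boundary. The assumed bound $|f(x,t)-f(0,0)|\le M_\theta|x|^{p/(p-1)}$ is then exactly \eqref{eq-bound-f-ka} with $\de=1$.

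Proposition~\ref{prop-small-reg} now yields \eqref{eq-lim-Hf}. No step is a genuine obstacle. The only points to check are that $\de=1$ lies in the allowed range $0<\de\le1$, and that $|x|\le1$ on the whole boundary, including the lateral part where $\theta|x|^l=\theta$ and the bottom part where $t=-\theta$.
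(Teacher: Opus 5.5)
Your proof is correct and matches the paper's argument. The paper likewise takes $\de=1$ in Proposition~\ref{prop-small-reg}: for $l=p/(p-1)$ the coefficient reduces to $M_\theta$, and $\min\{|x|,1\}=|x|$ on $\bdy\Theta_{l,\theta}$ because $|x|\le 1$ there, which you also verified.
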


For $l=p$,
we can treat any $f$ that is continuous at $(0,0)$ and 
bounded in terms of  $M_\theta$ as follows.

\begin{cor}  \label{cor-reg-l=p}
Assume that $l=p>2$ and $\theta>0$.
If $f$ is continuous at $(0,0)$ and satisfies
\begin{equation}   \label{eq-ass-with-Mtheta}   
 |f-f(0,0)| \le M_\theta
:=\frac {p-2}{p} \biggl(\frac{1}{np\theta}\biggr)^{1/(p-2)}
\quad \text{on } \bdy\Theta_{l,\theta},
\end{equation}
then \eqref{eq-lim-Hf} holds.
\end{cor}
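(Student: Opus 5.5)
The plan is to derive Corollary~\ref{cor-reg-l=p} from Proposition~\ref{prop-small-reg} by a careful choice of $\de$, together with an approximation argument that uses the continuity of $f$ at $(0,0)$. For $l=p$ the exponent in \eqref{eq-bound-f-ka} is $\frac{p}{p-1}-p=-\frac{p(p-2)}{p-1}$. Hence
\[
\biggl(\frac{\de^{\frac{p}{p-1}-p}}{np\theta}\biggr)^{1/(p-2)} = \de^{-p/(p-1)}\biggl(\frac{1}{np\theta}\biggr)^{1/(p-2)},
\]
and the right-hand side of \eqref{eq-bound-f-ka} becomes $M_\theta \bigl(\min\{|x|,\de\}/\de\bigr)^{p/(p-1)}$. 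This equals $M_\theta$ exactly when $|x|\ge\de$, and it is positive whenever $x\ne0$.

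We may assume $f(0,0)=0$. Fix $\eps>0$ and use continuity at $(0,0)$ to choose $r\in(0,1]$ such that $|f|\le\eps$ on $\bdy\Theta_{p,\theta}\cap\{|x|<r,\ |t|<\theta r^p\}$. The part of the boundary with $x=0$ lies in $\{t=0\}$ near the origin. We now build comparison boundary data. Let $g$ be the function equal to $M_\theta\bigl(\min\{|x|,\de\}/\de\bigr)^{p/(p-1)}$ with $\de=r$. Since $|f|\le M_\theta$ everywhere and $|x|\ge r$ on the rest of the boundary (where $|f|$ might exceed $\eps$), we get $|f|\le \eps+g$ on all of $\bdy\Theta_{p,\theta}$. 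One subtlety is boundary points with $|x|<r$ but $|t|$ not small, namely the points with $t=-\theta$. These should be excluded by restricting to a smaller neighbourhood, using that regularity is local, or handled by noting that on $\{-t=\theta|x|^p\}$ with $|x|<r$ we automatically have $|t|<\theta r^p$.

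By the comparison principle for Perron solutions, together with the fact that constants are solutions, we obtain $\uP f\le \eps+\uP g$ and $\lP f\ge -\eps-\uP g$. Proposition~\ref{prop-small-reg}, applied to $\pm g$ (which satisfies \eqref{eq-bound-f-ka} with equality and with $\de=r\le1$), gives $\uP g(\xi)\to0$ as $\xi\to(0,0)$. Therefore $\limsup\uP f\le\eps$ and $\liminf\lP f\ge-\eps$. Letting $\eps\to0$ and using $\lP f\le\uP f$ yields \eqref{eq-lim-Hf}.

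The main obstacle is checking that $g$ is a legitimate boundary function for Proposition~\ref{prop-small-reg}. In that proposition, \eqref{eq-bound-f-ka} constrains $f$ itself, so we need $g$ continuous with $g(0,0)=0$, which holds. We also need the Perron solutions to be well defined for bounded data, which follows because $\Theta_{p,\theta}$ is bounded.
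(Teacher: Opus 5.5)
Your proposal is correct and follows essentially the same route as the paper: choose $\de$ from continuity of $f$ at $(0,0)$, use that for $l=p$ the right-hand side of \eqref{eq-bound-f-ka} equals $M_\theta$ when $|x|\ge\de$, and invoke Proposition~\ref{prop-small-reg}. The only differences are minor: the paper applies the proposition to $(f-\eps)_+$ (and then to $-f$) instead of to your explicit majorant $g$, and the boundary points with $t=-\theta$ that worry you all have $|x|=1$, so no localization step is needed.
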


Tables \ref{table-ex-1} and \ref{table-ex-2} summarize the known (ir)regularity results for 
soda cans with $n\ge2$.

\begin{table}[t]
\begin{center}
\begin{tabular}{|c||c|c|c|c|}
\hline
 $1<p\le 2 \le n$    & $p<\frac{2n\strutheight}{n+1\strutdepth}$ & $ \frac{2n}{n+1}\le p<2$ & $p=2<n$ &  $p=2=n$   \\  \hline  \hline
     $ 0<l<\min\bigl\{ p, \frac{(n-p)(2-p)\strutheight}{p-1\strutdepth} \bigr\}$ 
& irreg & irreg & 
$\emptyset$ & $\emptyset$ \\ \hline
     $ \frac{(n-p)(2-p)\strutheight}{p-1\strutdepth} \le l<p $ & 
$\emptyset$ & ? & irreg & reg\\ \hline
     $l=p$\strutheight\strutdepth & irreg & ? & reg & reg\\ \hline
     $l>p$\strutheight\strutdepth & reg & reg & reg & reg\\ \hline
\end{tabular}
\end{center}
\caption{This table summarizes the 
known (ir)regularity results for soda cans
with  $1<p\le 2 \le n$.
The symbol $\emptyset$
indicates that there are no $l$ in that range.
}

\label{table-ex-1}
\end{table}

\begin{table}[ht]
\begin{center}
\begin{tabular}{|c|c|c|c|c|}
\hline
\multicolumn{5}{|c|}{$2<p<n$\strutheight\strutdepth} \\ \hline
$ 0<l<\frac{p\strutheight}{p-1\strutdepth}$ & $ l=\frac{p}{p-1}$ & $ \frac{p}{p-1}<l<p$ &  $l=p$  & $l>p$ \\  \hline  \hline
 & \multicolumn{3}{|c|}{Partial regularity for (wlog $f(0,0)=0$):\strutheight\strutdepth} & \\ 
? &  $ |f(x,t)|\le M_\theta |x|^{\frac{p}{p-1}\strutheight}$ & $f$ as in \eqref{eq-bound-f-ka} & $|f(x,t)|\le M_\theta$  & reg \\ 
 &   &  & continuous at  $(0,0)$\strutdepth  & \\ \hline
\end{tabular}
\end{center}
\caption{This table summarizes the 
known (ir)regularity results for soda cans
with  $2<p<n$.
}
\label{table-ex-2}
\end{table}

We end this discussion by formulating the following open problems.

\begin{openprob}
Let $p>2$ and $\theta>0$. 
\begin{enumerate}
\item Is $(0,0)$ regular for $\Theta_{l,\theta}$ when $0 < l \le p$?
\item Is there a 
barrier at $(0,0)$ for $\Theta_{l,\theta}$ 
when $0 <l < p/(p-1)$?
\end{enumerate}
\end{openprob}

\begin{openprob}
Let 
\[
 \frac{2n}{n+1} \le p<2,\quad  \theta>0
\quad \text{and} \quad \frac{(n-p)(2-p)}{p-1} \le l\le p.
\]
Is $(0,0)$ regular for $\Theta_{l,\theta}$?
\end{openprob}

\begin{openprob}
Let $p=n=2$.
\begin{enumerate}
\item 
Can one find a precise formula for the capacity $\capp(C_{r,h})$ of
parabolic cylinders $C_{r,h}$ 
 when $h \ge r^2$? Cf.\ Lemma~\ref{lem-heat-cyl}. 
\item
Can such a formula (or something else) be used to 
give a full characterization 
of the regualarity of the origin
for generalized soda cans
(not necessarily of power type) in $\R^2$,
like the one for $n \ge 3$ in  Proposition~\ref{prop-Phi-rho}?
\end{enumerate}
\end{openprob}

The outline of the paper is as follows.
In Sections~\ref{sect-prelim} and~\ref{sect-bdy-reg} we introduce
some notation and necessary background results.
The proofs of Propositions~\ref{prop-l>p-reg}--\ref{prop-soda-irreg}
are  given in Section~\ref{sect-pf}.
In Section~\ref{sect-heat} we study the heat equation
and prove Proposition~\ref{prop-irreg-EG}.
In Section~\ref{sect-trad-bar} 
we give our first proof of Proposition~\ref{prop-small-reg}.
Also Corollary~\ref{cor-reg-l=p} is proved here, while
Corollary~\ref{cor-reg-l=p/(p-1)} is a direct consequence
of Proposition~\ref{prop-small-reg}.  
Section~\ref{sect-Baren} is devoted to some
regularity
results based on Barenblatt solutions,
including our second proof of Proposition~\ref{prop-small-reg}.
We end the paper with two 
simple  real-world interpretations of soda can domains.

\begin{ack}
A.~B. resp.\ J.~B. were supported by the Swedish Research Council,
  grants 2020-04011 and 2024-04095 resp.\ 2022-04048. 
\end{ack}

\section{Preliminaries}
\label{sect-prelim}

We will use the notation and several results from 
Bj\"orn--Bj\"orn--Gianazza--Par\-vi\-ain\-en~\cite{BBGP}.
Here we will be brief and only introduce and discuss what we really need,
see \cite{BBGP} for a more extensive discussion.

\medskip

\emph{From now on we will always assume that 
$\Theta\subset\R^{n+1}$ is a nonempty bounded open set
and $1<p< \infty$.}

\medskip

Let $U\subset \R^n$ be a bounded open set.
The \emph{parabolic Sobolev space} $L^p(t_1,t_2;W^{1,p}(U))$,
with $t_1<t_2$, is the space of functions $u(x,t)$ such that the mapping $x
\mapsto u(x,t)$ belongs to $W^{1,p}(U)$ for almost every $t_1 < t <
t_2$ and the norm
\[
\biggl(\iintlim{t_1}{t_2}\int_U (|u(x,t)|^{p} + |\nabla
u(x,t)|^{p})\,dx\,dt\biggr)^{1/p}
\]
is finite. 
Analogously, by the space $C([t_1,t_2];L^p(U))$,
with $t_1<t_2$, we mean the space of functions $u(x,t)$, such that the mapping
$t\mapsto\int_U|u(x,t)|^p\,dx$ is continuous in the time interval $[t_1,t_2]$.
(The gradient $\nabla$ is always taken with
respect to the $x$-variables in this paper.)

\begin{definition}
A function $u:\Theta \to [-\infty,\infty]$ is a  
(weak) \emph{solution} to the equation \eqref{eq:para} 
if whenever $U_{t_1,t_2}:=U \times(t_1,t_2) \Subset \Theta$ is an open cylinder, 
we have 
$u \in C([t_1,t_2];L^2(U))\cap L^{p}(t_1,t_2;W^{1,p}(U))$ 
and  $u$ satisfies the integral equality
\begin{equation} \label{eq-def-soln} 
\iintlim{t_1}{t_2}\int_{U} |\nabla u|^{p-2} \nabla u \cdot
\nabla\phi \, dx\,dt - \iintlim{t_1}{t_2}\int_{U} u
\frac{\partial\phi}{\partial t} \, dx\,dt   =  0
\end{equation}
for all $\phi \in C_0^\infty(U_{t_1,t_2})$.
A \emph{\p-parabolic function} is a continuous solution.

A function $u$
is a (weak) \emph{supersolution}
if whenever $U_{t_1,t_2} \Subset
\Theta$ we have $u \in
L^{p}(t_1,t_2;W^{1,p}(U))$ and the 
left-hand side in \eqref{eq-def-soln} 
is nonnegative for all
nonnegative $\phi \in C_0^\infty(U_{t_1,t_2})$.
\end{definition}

\begin{definition}\label{def:superparabolic}
A function $u:\Theta\rightarrow (-\infty,\infty]$
is \emph{\p-super\-parabolic} if
\begin{enumerate}
\renewcommand{\theenumi}{\textup{(\roman{enumi})}}%
\item $u$ is lower semicontinuous,
\item $u$ is finite in a dense subset of ${\Theta}$,
\item \label{def-comp}
$u$ satisfies the following comparison principle on each space-time
  box $Q_{t_1,t_2}\Subset{\Theta}$: If $h$ is \p-parabolic in
  $Q_{t_1,t_2}$ and continuous on $\itoverline{Q}_{t_1,t_2}$, and if
  $h\leq u$ on the parabolic boundary  $\partial_p Q_{t_1,t_2}$, 
then $h\leq u$ in the whole $Q_{t_1,t_2}$.
\end{enumerate}

A function $v:\Theta\rightarrow [-\infty,\infty)$
is \emph{\p-sub\-parabolic} if $-u$ is \p-superparabolic.
\end{definition}

Here $Q_{t_1,t_2}$ is a \emph{space-time box} if it is of the form
$ Q_{t_1,t_2}=Q\times(t_1,t_2)$, where $Q=(a_1,b_1)\times\dots\times(a_n,b_n)$.
The  \emph{parabolic boundary} of $Q_{t_1,t_2}$ is
\[
\partial_p Q_{t_1,t_2}=(\itoverline Q\times \{t_1\})\cup(\partial Q\times (t_1,t_2]).
\]

The connection between \p-superparabolic functions and supersolutions 
is a delicate issue.
However, a continuous supersolution is \p-superparabolic
by the comparison principle 
of Korte--Kuusi--Par\-vi\-ain\-en~\cite[Lemma~3.5]{KoKuPa10}.

We will need the following parabolic comparison principle.
In this paper, $\bdy \Theta$ always denotes the entire
(topological) boundary of $\Theta$.

\begin{thm} \label{thm-parabolic-comparison}
\textup{(Parabolic comparison principle, \cite[Theorem~2.4]{BBGP})}
Suppose that $u$ is \p-super\-parabolic and $v$ is \p-sub\-parabolic
in $\Theta$.
Let $T \in \R$ and assume that
 \[
  \infty \ne    \limsup_{\Theta \ni (y,s)\rightarrow (x,t)} v(y,s)\leq
   \liminf_{\Theta \ni (y,s)\rightarrow (x,t)} u(y,s) \ne -\infty
 \]
for all $(x,t) \in \partial\Theta$ with $t< T$.
Then $v\leq u$ in $\{(x,t) \in \Theta : t<T\}$.
\end{thm}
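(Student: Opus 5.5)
The plan is the standard compactness-plus-exhaustion argument of Kilpel\"ainen--Lindqvist~\cite{KiLi96}, adapted so that only the boundary points with $t<T$ are used. Since $\{t<T\}=\bigcup_{T'<T}\{t\le T'\}$, it suffices to fix $T'<T$ and $\eps>0$ and to prove $v\le u+\eps$ in $\Theta_{T'}:=\{(x,t)\in\Theta: t\le T'\}$. Then we let $\eps\to0$ and $T'\nearrow T$.

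\emph{Step 1 (localizing away from the boundary).} For each $\xi\in\bdy\Theta$ with $t\le T'$, the hypothesis, together with the finiteness of the $\limsup$ and $\liminf$, gives a neighbourhood $N_\xi$ of $\xi$ such that $v<u+\eps$ in $N_\xi\cap\Theta$. This uses the strict inequality $\limsup v<\liminf u+\eps$ at $\xi$. The set $\bdy\Theta\cap\{t\le T'\}$ is compact, so finitely many $N_\xi$ cover it. Their union $N$ is open, and $K:=\itoverline{\Theta}\cap\{t\le T'\}\setm N$ is a compact subset of $\Theta$. Hence $v<u+\eps$ on $\Theta_{T'}\setm K$. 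Note that $u+\eps$ is still \p-superparabolic, since the equation is invariant under adding constants.

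\emph{Step 2 (an elementary comparison domain).} We cover $K$ by finitely many space-time boxes compactly contained in $\Theta$. We then refine this into a set $D$ which is a finite union of time slabs $(s_{j},s_{j+1})$, each of the form $P_j\times(s_j,s_{j+1})$ with $P_j$ a finite union of coordinate boxes. We arrange that $K\cap\{t\le T'\}\subset D$, that $\itoverline D\subset\Theta$, and that the parabolic boundary $\bdy_p D$ lies in $\Theta_{T'}\setm K$, where $v<u+\eps$ holds. The parabolic boundary here consists of the bottoms of the slabs and the lateral boundaries of the slabs. Outside $D$ the desired inequality already holds, so it remains to compare inside $D$.

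\emph{Step 3 (comparison on $D$), which I expect to be the main obstacle.} Definition~\ref{def:superparabolic}\,\ref{def-comp} gives comparison only on single boxes, with a \p-parabolic comparison function continuous up to the closure. I would argue slab by slab, upwards in time.

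\begin{itemize}
\item Since $v$ is upper semicontinuous, approximate it on $\bdy_p$ of the current slab by a decreasing sequence of continuous functions $\phi_k\searrow v$.
\item By compactness and Dini-type arguments, $\phi_k<u+\eps$ on the parabolic boundary of the slab for large $k$, using the lower semicontinuity of $u$.
\item Solve the Dirichlet problem in $P_j\times(s_j,s_{j+1})$ with data $\phi_k$. Boundary regularity holds there because the lateral boundary is polyhedral and satisfies an exterior cone/box condition, and the bottom is a flat initial time.
\item This produces a \p-parabolic function $h_k$, continuous up to the boundary, with $v\le h_k$ and $h_k\le u+\eps$.
\end{itemize}

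The inequality $h_k\le u+\eps$ in the slab should follow from the box comparison. To get it one has to pass from boxes to unions of boxes, for instance by invoking the known equivalence, for \p-superparabolic functions, between box comparison and comparison on all such ``polyhedral'' cylinders~\cite{KiLi96}. Alternatively one can cover the slab by overlapping boxes and iterate, exploiting the fact that values on a slab top serve as initial data for the next slab. Since $v\le h_k$ by the comparison principle for \p-subparabolic functions, we obtain $v\le u+\eps$ on each slab, which closes the induction over slabs and finishes the proof.
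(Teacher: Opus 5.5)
The paper does not prove this statement; it quotes it from \cite[Theorem~2.4]{BBGP}, so your argument has to stand on its own.

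\emph{Steps~1--2.} These are fine, and they are the standard compactness reduction. The case distinction allowed by the excluded infinities gives $v<u+\eps$ near every $\xi\in\bdy\Theta$ with $t\le T'$. The set $K$ is then a compact subset of $\Theta$. Any finite union $D$ of boxes with $K\subset D$ and $\itoverline{D}\subset\Theta$ has its boundary inside the region where $v<u+\eps$. One small fix: cut $D$ at $t=T'$ and aim only at $\{t<T'\}$, which suffices. Otherwise the lateral boundary points of $D$ above $T'$ are uncontrolled.

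\emph{Step~3 is a genuine gap, and it is where the whole content of the theorem lies.} Definition~\ref{def:superparabolic}\,\ref{def-comp} gives comparison with \p-parabolic functions only on single boxes. The set $P_j\times(s_j,s_{j+1})$ is not a box when $P_j$ is a union of overlapping boxes. So neither $v\le h_k$ nor $h_k\le u+\eps$ follows from the definition.

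The ``known equivalence'' you invoke is exactly the comparison principle on finite unions of boxes. Together with your Steps~1--2, that \emph{is} the theorem. You therefore have to prove it, or cite a precise result whose proof does not itself use the theorem. One possible route is to approximate \p-superparabolic functions by continuous supersolutions and combine this with a weak comparison principle.

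Your alternative, covering the slab by overlapping boxes and iterating, is a Schwarz alternating scheme. For this nonlinear equation you would need monotone convergence of the iterates, \p-parabolicity of the limit, and attainment of the boundary values. None of this is addressed.

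\emph{Two further points fail as written.}

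First, the induction over slabs does not close. The previous slab gives $v\le u+\eps$ only in the \emph{open} slab. At $t=s_j$ semicontinuity works against you: $v$ may jump up and $u$ may jump down. So nothing is known at interior points of the bottom of the next slab, and there your Dini argument even needs a strict inequality. This can be repaired by letting the slabs overlap in time. For small $\de>0$ one has $(P_j\setm P_{j-1})\times(s_j-\de,s_j)\subset\Theta\setm D$, and you lose $\eps 2^{-j}$ per slab.

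Second, solving the Dirichlet problem on $P_j\times(s_j,s_{j+1})$ with the boundary values attained is not automatic. If two boxes share part of a face, $P_j$ has a slit in its boundary with no exterior cone. Moreover, the lateral-regularity results for cylinders in this framework are proved through barriers and Perron solutions, hence through the comparison principle, so quoting them here is circular. You would need boxes with pairwise distinct face coordinates, and a boundary-continuity result for weak solutions that does not rely on \p-superparabolic functions.
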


\begin{lem} \label{lem-pasting}
\textup{(Pasting lemma,  \cite[Lemma~2.9]{BBGP})}
Let $G \subset \Theta$ be open. 
Also let $u$ and $v$ be \p-superparabolic in $\Theta$ and $G$,
respectively,
and let
\[
    w=\begin{cases}
     \min\{u,v\} & \text{in } G, \\
     u & \text{in } \Theta \setm G. \\
    \end{cases}
\] 
If $w$ is lower semicontinuous, then $w$ is \p-superparabolic in $\Theta$.
\end{lem}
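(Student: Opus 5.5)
The plan is to verify the three conditions of Definition~\ref{def:superparabolic} for $w$ directly. Condition (i), lower semicontinuity, is assumed. For (ii), note first that $w\le u$ everywhere in $\Theta$. Moreover, $w>-\infty$ everywhere, since $u$ and $v$ take values in $(-\infty,\infty]$. Hence $w$ is finite at every point where $u$ is finite, and that set is dense in $\Theta$ because $u$ is \p-superparabolic. The real work is the comparison property \ref{def-comp}.

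For \ref{def-comp}, fix a box $Q_{t_1,t_2}\Subset\Theta$ and let $h$ be \p-parabolic in $Q_{t_1,t_2}$, continuous on $\itoverline{Q}_{t_1,t_2}$, with $h\le w$ on $\partial_p Q_{t_1,t_2}$.

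\emph{Step 1.} Since $w\le u$, we have $h\le u$ on $\partial_p Q_{t_1,t_2}$. The \p-superparabolicity of $u$ then gives $h\le u$ in all of $Q_{t_1,t_2}$. In particular $h\le w$ in $Q_{t_1,t_2}\setm G$, because $w=u$ there.

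\emph{Step 2.} It remains to prove $h\le v$ in $D:=Q_{t_1,t_2}\cap G$; we may assume $D\ne\emptyset$. We apply the parabolic comparison principle, Theorem~\ref{thm-parabolic-comparison}, on the bounded open set $D$ with $T=t_2$. Here $v$ is \p-superparabolic in $D$, since boxes compactly contained in $D$ are also compactly contained in $G$. Also $h$ is \p-parabolic, hence \p-subparabolic, in $D$.

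Let $(x,t)\in\partial D$ with $t<t_2$. Since $h$ is continuous on $\itoverline{Q}_{t_1,t_2}$, the limsup of $h$ at $(x,t)$ equals $h(x,t)$, which is finite. Using $v\ge w$ in $G$ and the lower semicontinuity of $w$, we get
\[
\liminf_{D\ni\xi\to(x,t)} v(\xi)\ \ge\ \liminf_{D\ni\xi\to(x,t)} w(\xi)\ \ge\ w(x,t).
\]
So it suffices to show $w(x,t)\ge h(x,t)$, which also rules out the value $-\infty$ for the liminf. There are two cases.
\begin{enumerate}
\item If $(x,t)\in\partial Q_{t_1,t_2}$ with $t<t_2$, then $(x,t)\in\partial_pQ_{t_1,t_2}$, and $w(x,t)\ge h(x,t)$ by hypothesis.
\item Otherwise $(x,t)\in Q_{t_1,t_2}\cap\partial G$. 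Then $(x,t)\in\Theta\setm G$, since $G$ is open. Hence $w(x,t)=u(x,t)\ge h(x,t)$ by Step~1.
\end{enumerate}
Theorem~\ref{thm-parabolic-comparison} then yields $h\le v$ in $D\cap\{t<t_2\}=D$, as $D$ is open and contained in $\{t<t_2\}$. Combining this with $h\le u$, we get $h\le\min\{u,v\}=w$ in $D$.

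The main obstacle is the boundary behaviour on the interior part $Q_{t_1,t_2}\cap\partial G$, where nothing is a priori known about $v$. This is exactly where the assumed lower semicontinuity of $w$ is used, transferring the bound $u\ge h$ from Step~1 to the liminf of $v$ from inside $G$. The order of the argument matters: first compare with $u$ on the whole box, then with $v$ on the piece inside $G$.
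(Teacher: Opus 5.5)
Your proof is correct and is essentially the standard argument behind the cited \cite[Lemma~2.9]{BBGP}, which is the parabolic analogue of the elliptic pasting lemma: first compare $h$ with $u$ on the whole box, then use the lower semicontinuity of $w$ to get $\liminf v \ge w \ge h$ on $\partial(Q_{t_1,t_2}\cap G)$ below $t=t_2$, and conclude $h\le v$ in $Q_{t_1,t_2}\cap G$ by the parabolic comparison principle (Theorem~\ref{thm-parabolic-comparison}). The paper itself gives no proof and only quotes \cite{BBGP}, whose argument follows this same route.
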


We now turn to the Perron method.
It will be enough to consider Perron solutions for
bounded functions, so for simplicity we restrict ourselves
to this case.

\begin{definition}   \label{def-Perron}
Given a bounded function $f \colon \bdy \Theta \to \R$,
let the upper class $\UU_f$ be the set of all
\p-superparabolic  functions $u$ on $\Theta$ 
such that
\[
    \liminf_{\Theta \ni \eta \to \xi} u(\eta) \ge f(\xi) \quad \text{for all }
    \xi \in \bdy \Theta.
\]
Define the \emph{upper Perron solution} of $f$  by
\[
\uP_{\Theta} f (\xi) =
\inf_{u \in \UU_f}  u(\xi), \quad \xi \in \Theta.
\]
Similarly, let 
$\LL_f$ be the set of all
\p-subparabolic functions $u$ on $\Theta$ 
such that
\[
\limsup_{\Theta \ni \eta \to \xi} u(\eta) \le f(\xi) \quad \text{for all }
\xi \in \bdy \Theta,
\]
and define the \emph{lower Perron solution}  of $f$ by
\[
\lP_{\Theta} f (\xi) = 
\sup_{u \in \LL_f}  u(\xi), \quad \xi \in \Theta.
\]
\end{definition}

When the set is clear from the context, we often drop $\Theta$ from the
notation and simply write $\uP f$ etc.
It follows from the 
parabolic comparison principle (Theorem~\ref{thm-parabolic-comparison})
that 
\begin{equation} \label{eq-lp<=uP}
\lP f \le \uP f.
\end{equation}
Moreover $\lP f = - \uP (-f)$.
Kilpel\"ainen--Lindqvist~\cite[Theorem~5.1]{KiLi96}
showed that both $\lP f$ and $\uP f$
are \p-parabolic.

The following simple lemma is easily proved by direct calculation.
It also follows from Lemma~\ref{lem-Deltap-polar}.

\begin{lem}   \label{lem-Delta-powers}
\textup{(Bj\"orn--Bj\"orn--Gianazza~\cite[Lemma~2.5]{BBG})}
For any $\al, C \in\R$ and $x\ne0$ we have
\[
\Delta_p (C|x|^\al) = C\al |C\al |^{p-2}
(n+(\al-1)(p-1)-1) |x|^{(\al-1)(p-1)-1}.
\]
In particular, 
$\Delta_p (C|x|^{p/(p-1)}) = (Cp/(p-1))^{p-1}n$ when $C\ge0$.
\end{lem}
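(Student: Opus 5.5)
This is a direct computation in polar coordinates, so the plan is to reduce $\Delta_p$ of a radial function to a one-variable expression in $r=|x|$. Set $u(x)=v(r)$ with $v(r)=Cr^\al$, for $x\ne0$. Then
\[
\nabla u = v'(r)\,\frac{x}{r},
\qquad
|\nabla u|^{p-2}\nabla u = |v'|^{p-2}v'\,\frac{x}{r}.
\]
Using $\Div(g(r)x/r)=g'(r)+\frac{n-1}{r}g(r)$ with $g=|v'|^{p-2}v'$, we obtain
\[
\Delta_p u = r^{1-n}\bigl(r^{n-1}|v'|^{p-2}v'\bigr)'.
\]
This is the radial formula that Lemma~\ref{lem-Deltap-polar} presumably records.

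Next substitute $v'(r)=C\al r^{\al-1}$. This gives $|v'|^{p-2}v' = C\al|C\al|^{p-2} r^{(\al-1)(p-1)}$, where we use $r>0$ so that $r^{\al-1}$ is positive and factors out of the absolute value. Multiplying by $r^{n-1}$ yields a pure power $r^{(\al-1)(p-1)+n-1}$. Differentiating it brings down the exponent $(\al-1)(p-1)+n-1$, and multiplying by $r^{1-n}$ leaves $r^{(\al-1)(p-1)-1}$. Together these give exactly the stated formula.

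For the special case $\al=p/(p-1)$, note that $(\al-1)(p-1)=1$. Hence the exponent of $|x|$ is $0$ and the bracket equals $n+1-1=n$. When $C\ge0$ the coefficient $C\al|C\al|^{p-2}$ equals $(Cp/(p-1))^{p-1}$, which gives $\Delta_p(C|x|^{p/(p-1)})=(Cp/(p-1))^{p-1}n$.

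There is no real obstacle in this argument. The only points needing care are the sign bookkeeping in $|v'|^{p-2}v'$ when $C\al<0$, and the fact that the computation is pointwise and classical on $x\ne0$, where $u$ is smooth.
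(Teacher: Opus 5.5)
Your computation is correct and is essentially the paper's argument: the paper says the lemma follows by direct calculation or from Lemma~\ref{lem-Deltap-polar}, and you use exactly the radial divergence identity in \eqref{eq-n} applied to $g=|v'|^{p-2}v'$. Working with $|v'|^{p-2}v'$ rather than with \eqref{eq-Deltap}, which assumes $u'\ge0$, is a good choice, since it covers the case $C\al<0$ directly.
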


\section{Boundary regularity}
\label{sect-bdy-reg}

\begin{definition}
\label{def:regular}
A boundary point $\xi_0\in \partial \Theta$ is 
\emph{regular} with respect to $\Theta$, if
\[
        \lim_{\Theta \ni \xi \to \xi_0} \uP f(\xi)=f(\xi_0)
\quad \text{for all }f \in C(\bdy \Theta).
\]
\end{definition}

Since $\lP f = - \uP (-f)$, regularity can equivalently
be formulated using lower Perron solutions.
Note that, for $p \ne 2$ and  general $\Theta$, it is not known
whether $\lP f = \uP f$, even for continuous $f$.
Next we summarize some of the main results from 
Bj\"orn--Bj\"orn--Gianazza--Par\-vi\-ain\-en~\cite{BBGP},
which will be needed later.

\begin{definition}
\label{def-barrier} 
Let $\xi_0\in \partial \Theta$. 
A positive \p-superparabolic function $w$ in $\Theta$
is a \emph{barrier} at $\xi_0$ if 
\begin{equation} \label{eq-trad-barrier} 
\lim_{\Theta \ni \zeta\to \xi_0} w(\zeta)=0 
   \quad \text{and} \quad
  \liminf_{\Theta \ni\zeta\to \xi} w(\zeta)>0
   \text{ for all } \xi \in \bdy \Theta \setm \{\xi_0\}.
\end{equation}

A \emph{barrier family} at $\xi_0$ is a family of 
barriers $w_j$, $j=1,2,\dots$\,,  such that 
for each $k=1,2,\dots$\,, there is a $j$ for which
\[
   \liminf_{\Theta \ni \zeta\to\xi} w_j(\zeta) \ge k
   \quad \text{for every } \xi \in \bdy \Theta 
   \text{ with }|\xi-\xi_0| \ge 1/k.
\]
\end{definition}

This definition of barrier family lies between
the barrier family and the strong barrier family 
from \cite{BBGP}, and thus it also characterizes regularity
in the following way.

\begin{thm}\label{thm:barrier-char}
\textup{(\cite[Theorem~3.3]{BBGP})} 
A boundary point  $\xi_0 \in \bdy \Theta$
is regular if and only if it has a barrier family.
\end{thm}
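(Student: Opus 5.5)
The plan is to prove the two implications separately. For necessity we reduce to \cite[Theorem~3.3]{BBGP}; for sufficiency we give a direct Perron argument. One structural point shapes everything. For $p\ne2$ the equation~\eqref{eq:para} is not homogeneous: $\Delta_p(cw)=c^{p-1}\Delta_p w$, whereas $\partial_t(cw)=c\,\partial_t w$. So positive multiples of a barrier need not be \p-superparabolic. Only adding constants, taking minima (Lemma~\ref{lem-pasting}) and comparison (Theorem~\ref{thm-parabolic-comparison}) are safe. This is why the definition of a barrier family builds the "largeness" $k$ into the family itself rather than obtaining it by scaling.

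\emph{Sufficiency.} Assume a barrier family $\{w_j\}$ exists. Let $f\in C(\bdy\Theta)$, put $M=\sup|f|$, and let $\eps>0$.
\begin{enumerate}
\item By continuity, choose $k\ge 2M$ with $|f(\xi)-f(\xi_0)|<\eps$ whenever $|\xi-\xi_0|<1/k$.
\item Take $j$ as in Definition~\ref{def-barrier} for this $k$, and set $u=f(\xi_0)+\eps+w_j$. Adding a constant preserves \p-superparabolicity.
\item Check that $u\in\UU_f$. At boundary points with $|\xi-\xi_0|<1/k$, positivity of $w_j$ gives $\liminf u\ge f(\xi_0)+\eps> f(\xi)$. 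At boundary points with $|\xi-\xi_0|\ge1/k$, we have $\liminf u\ge f(\xi_0)+k\ge -M+2M\ge f(\xi)$.
\end{enumerate}
Hence $\uP f\le u$, and so $\limsup_{\xi\to\xi_0}\uP f\le f(\xi_0)+\eps$. The same argument applied to $-f$ gives $\liminf_{\xi\to\xi_0}\lP f\ge f(\xi_0)-\eps$. Combined with \eqref{eq-lp<=uP}, letting $\eps\to0$ yields $\lim_{\xi\to\xi_0}\uP f(\xi)=f(\xi_0)$, so $\xi_0$ is regular.

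\emph{Necessity.} This is the direction I expect to be the main obstacle. The natural candidate is $\uP f_k$ with $f_k(\xi)=\min\{k,k^2|\xi-\xi_0|\}$. Regularity makes $\uP f_k$ tend to $0$ at $\xi_0$. However, $\uP f_k$ need not have $\liminf\ge f_k$ at other, possibly irregular, boundary points, and it need not be positive. In \cite{BBGP} this is repaired by a careful construction that yields a \emph{strong} barrier family, built from members of the upper class pasted together with Lemma~\ref{lem-pasting}. I would not redo that construction. Instead I would invoke \cite[Theorem~3.3]{BBGP}, which says regularity implies the existence of a strong barrier family. It then remains only to verify, directly from the definitions in \cite{BBGP}, that a strong barrier family satisfies the conditions of Definition~\ref{def-barrier}. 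In other words, our notion sits between the two notions of \cite{BBGP}, and that verification is routine. This completes the characterization.
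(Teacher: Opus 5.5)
Your proposal is correct. The necessity direction is the same as the paper's. The paper's entire justification is that the notion in Definition~\ref{def-barrier} lies between the barrier family and the strong barrier family of \cite{BBGP}. At a regular point, \cite[Theorem~3.3]{BBGP} provides a strong barrier family, and this is a barrier family in the present sense.

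For sufficiency the routes differ. The paper uses the other half of that sandwich: a barrier family in the present sense is in particular a barrier family in the weaker sense of \cite{BBGP}, and \cite[Theorem~3.3]{BBGP} is cited again. You instead give the direct Perron argument, showing $f(\xi_0)+\eps+w_j\in\UU_f$, applying the same to $-f$, and combining with \eqref{eq-lp<=uP}. That argument is sound. The paper's route is shorter because it delegates both directions to \cite{BBGP}. Yours makes the easy direction self-contained, and it shows which properties of $w_j$ are actually used:
\begin{itemize}
\item positivity of $w_j$;
\item the limit $0$ at $\xi_0$;
\item the largeness condition.
\end{itemize}
The requirement $\liminf w_j>0$ at the other boundary points is never used. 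Your argument also shows, at no extra cost, that $\lim \uP f=\lim\lP f=f(\xi_0)$ for every bounded $f$ that is merely continuous at $\xi_0$.
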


It follows from 
Theorem~\ref{thm:barrier-char} and the pasting Lemma~\ref{lem-pasting}
that regularity is a local property of the boundary and that it
is inherited by subsets
(see~\cite[Proposition~3.4]{BBGP} for $p\ne2$ and Bauer~\cite[Theorems~28--29]{Bauer62}  for $p=2$).
Moreover, the following characterization holds.

\begin{thm} \label{thm-Omminus}
\textup{(\cite[Lemma~3.10 and Theorem~3.11]{BBGP})} 
A boundary point  $\xi_0=(x_0,t_0) \in \bdy \Theta$
is regular with respect to $\Theta$
if and only if either $\xi_0$ is regular with respect to 
$\Thetam:=\{(x,t) \in \Theta : t < t_0\}$
or $\xi_0 \notin \bdy \Thetam$.
\end{thm}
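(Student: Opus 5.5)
We prove the two implications separately, in each case working with the Perron solutions or barrier families of Section~\ref{sect-bdy-reg}. We use throughout that regularity is local and is inherited by subsets (the consequences of Theorem~\ref{thm:barrier-char} and the pasting Lemma~\ref{lem-pasting} noted above).

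\emph{Necessity.} Assume $\xi_0$ is regular for $\Theta$ and $\xi_0\in\bdy\Thetam$. Since $\Thetam\subset\Theta$, the inheritance of regularity by subsets gives that $\xi_0$ is regular for $\Thetam$. So this direction is immediate.

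\emph{Sufficiency, case $\xi_0\notin\bdy\Thetam$.} Then there is a ball $B\ni\xi_0$ with $\Theta\cap B\subset\{t>t_0\}$, apart from points with $t=t_0$, which are then not boundary points of $\Thetam$ near $\xi_0$. By locality it suffices to build local barriers. For $k=1,2,\dots$ we take
\[
w_k(x,t)=k\bigl((t-t_0)+C_k|x-x_0|^{p/(p-1)}\bigr).
\]
By Lemma~\ref{lem-Delta-powers},
\[
\bdy_t w_k-\Delta_p w_k=k-n\bigl(kC_kp/(p-1)\bigr)^{p-1}.
\]
This is $\ge0$ if $C_k>0$ is chosen small enough; for $p>2$ the choice of $C_k$ depends on $k$, but it is still positive. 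In $B\cap\{t\ge t_0\}$ the function $w_k$ is positive away from $\xi_0$ and tends to $0$ at $\xi_0$. For each $k$, enlarging the multiplier if needed makes it at least $k$ on $\{|\xi-\xi_0|\ge 1/k\}\cap B$. Pasting with constants, via Lemma~\ref{lem-pasting}, then gives a barrier family, and Theorem~\ref{thm:barrier-char} gives regularity.

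\emph{Sufficiency, case $\xi_0$ regular for $\Thetam$.} We argue directly with Perron solutions. Let $f\in C(\bdy\Theta)$ with $f(\xi_0)=0$ and $|f|\le1$, and fix $\eps>0$. Let $g$ be the boundary data on $\bdy\Thetam$ given by $g=f$ where $t<t_0$ and $g=1$ on the top $\{t=t_0\}$; we make $g$ continuous by a cutoff that keeps $g$ near $0$ close to $\xi_0$. The key observation is that the comparison principle, Theorem~\ref{thm-parabolic-comparison} with $T=t_0$, only involves boundary points with $t<t_0$. Consequently any $u\in\UU_g(\Thetam)$ satisfies $u\ge\uP_\Theta f$ on $\Thetam$. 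By regularity of $\xi_0$ for $\Thetam$, there are $u\in\UU_g(\Thetam)$ and $r>0$ with $u<\eps$ on $\Thetam\cap B(\xi_0,r)$.

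For $t\ge t_0$ we use the explicit supersolution
\[
v=\eps+M\bigl((t-t_0)+C|x-x_0|^{p/(p-1)}\bigr),
\]
with $C=C(M)$ small as above. Taking $M$ large, we get $v\ge1\ge f$ away from $B(\xi_0,r)$, and $v\ge\eps>u$ on the slice $t=t_0$ near $\xi_0$. We then glue $W=u$ for $t<t_0$ and $W=\min\{v,1\}$ for $t\ge t_0$. The resulting $W$ lies in $\UU_f(\Theta)$ locally near $\xi_0$, and extends to all of $\Theta$ by pasting with the constant $1$. Then $\limsup_{\xi\to\xi_0}\uP_\Theta f\le\eps$. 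Applying the same argument to $-f$ and using \eqref{eq-lp<=uP} gives the lower bound, and regularity follows.

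The main obstacle is the gluing across the slice $\{t=t_0\}$, that is, showing that $W$ is \p-superparabolic. The point is that $W$ jumps \emph{up} in time at $t_0$. I would first verify lower semicontinuity, arranging $W(\cdot,t_0)\le\liminf$ of $u$ from below through the choice of $\eps$ and $v$ (shrinking $v$ at $t=t_0$ if necessary). I would then check the comparison property on boxes $Q_{t_1,t_2}$ crossing $t_0$ by splitting each box at $t_0$. On the lower part, $u$ dominates $h$. On the upper part, the parabolic boundary of $Q\times(t_0,t_2)$ consists of the slice $t=t_0$ together with the lateral boundary, where $W\ge h$ by the first step, so comparison applies there as well.
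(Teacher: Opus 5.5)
Your necessity direction and the case $\xi_0\notin\bdy\Thetam$ are fine. Indeed $\Theta\cap B\subset\{t>t_0\}$ exactly, and the multipliers can be enlarged because $mC_m\simeq m^{1/(p-1)}\to\infty$. The flux of $|x-x_0|^{p/(p-1)}$ is a multiple of $x-x_0$, so the supersolution property also holds across $x=x_0$. The paper only quotes the statement without proof. So what matters is the case where $\xi_0$ is regular for $\Thetam$, and there your argument has two genuine gaps, both repairable.

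\emph{First gap: the glued $W$ is not \p-superparabolic as defined.} You set $W=\min\{v,1\}$ on $\{t\ge t_0\}$, including the slice, while demanding $v\ge\eps>u$ there. Hence $W(x,t_0)\ge\eps>\liminf_{s\to t_0-}u$ for $x$ near $x_0$, and lower semicontinuity fails. Your two requirements (slice values below the lower limit of $u$, and $v$ above $u$ on the slice) cannot both be met by tuning $\eps$ and $v$; they must be decoupled.
\begin{itemize}
\item Set $u_1=\min\{u,1\}$. The truncation is needed: away from $\xi_0$ only the value $1$ of $\min\{v,1\}$ is available to dominate $u$, and $u$ may be large near the top.
\item Let $W=u_1$ for $t<t_0$, $W(x,t_0)=\underline{u}_1(x):=\liminf_{(y,s)\to(x,t_0),\,s<t_0}u_1(y,s)$, and $W=\min\{v,1\}$ for $t>t_0$.
\item Your choices of $\eps$ and $M$ then give the domination $\underline{u}_1\le\min\{v,1\}$ on the slice.
\end{itemize}
This $W$ is lower semicontinuous. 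In your box splitting, handle the upper box with the comparison property of $\min\{v,1\}$ itself, which is superparabolic across $t_0$, using $h(\cdot,t_0)\le\underline{u}_1\le\min\{v,1\}(\cdot,t_0)$. This $W$ is already defined on all of $\Theta$. Drop the localization and pasting with the constant $1$: it would destroy lower semicontinuity on the cut for $t<t_0$, where $u_1<1$.

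\emph{Second gap: regularity does not give a single small $u$.} Regularity for $\Thetam$ does not by itself give one $u\in\UU_g(\Thetam)$ with $u<\eps$ on all of $\Thetam\cap B(\xi_0,r)$. It only gives $\uP_{\Thetam}g<\eps$ there, and $\uP_{\Thetam}g$ is merely a pointwise infimum. Use Theorem~\ref{thm:barrier-char} instead. Suppose $\{w_j\}$ is a barrier family for $\Thetam$ at $\xi_0$ and $g<\eps$ on $B(\xi_0,\de)$. Choose $j$ with $\liminf w_j\ge\sup|g|$ on $\bdy\Thetam\setm B(\xi_0,\de)$. Then $u=\eps+w_j$ lies in $\UU_g(\Thetam)$ and is $<2\eps$ near $\xi_0$. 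The values of $g$ on the top play no role, so $g$ can simply be the restriction of a bounded continuous extension of $f$.

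\emph{Two smaller points.} Your ``key observation'' is not justified. Theorem~\ref{thm-parabolic-comparison} with $T=t_0$ compares $u$ with each element of $\LL_f(\Theta)$ and yields $u\ge\lP_\Theta f$, not $u\ge\uP_\Theta f$. Comparing with $\uP_\Theta f$ would require knowing its boundary behaviour on $\bdy\Theta\cap\{t<t_0\}$, which is unknown. It is never used, so delete it. Also, $|f|\le1$ is not a harmless normalization for $p\ne2$, since the equation is not homogeneous; replace $1$ by $\sup|f|$ throughout.
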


\begin{remark} \label{rmk-trad-barrier}
In classical potential theory,  
a \emph{barrier} is a superharmonic (when dealing with the 
Laplace equation) or superparabolic 
(when dealing with the heat equation) 
function $w$ satisfying \eqref{eq-trad-barrier}.

Existence of a single 
barrier implies the regularity of a 
boundary point in the classical linear theories,
since  one can scale and lift the barrier.
The same is true for \p-harmonic functions, i.e.\ solutions of
the nonlinear elliptic \p-Laplace equation
$\Delta_p u=0$, see 
Kilpel\"ainen~\cite[Theorem~1.5]{Kilp89} or
Heinonen--Kilpel\"ainen--Martio~\cite[Theorem~9.8]{HeKiMa}.
In fact, in these cases,
the existence of a barrier
is \emph{equivalent} to the regularity of $\xi_0$,
see Lebesgue~\cite{Lebesgue24} (for harmonic functions),
Bauer~\cite[Theorems~30 and~31]{Bauer62} (for the heat equation)
and \cite{HeKiMa} or \cite{Kilp89} (for \p-harmonic functions).

On the other hand, for the \p-parabolic equation with $1<p<2$, it was
shown in
Bj\"orn--Bj\"orn--Gianazza~\cite[Proposition~1.2]{BBG}
that the existence of one barrier 
is \emph{not} enough to guarantee regularity.

Our definition of barriers is the
same as in~\cite{Lebesgue24} and~\cite{HeKiMa}.
In the linear theory, 
weaker requirements on the barrier are often assumed,
see e.g.~\cite[Definition~9]{Bauer62},
but the existence of such a barrier is
equivalent to the existence of a barrier as defined above.
\end{remark}

The following scaling lemma shows that for $p\ne2$
it suffices to study boundary regularity 
with respect to $\Theta_{l,\theta}$ for one particular $\theta$ only.
It also makes it possible to stretch a barrier, 
defined only in a neighbourhood of
$(0,0)$, to the whole soda can domain~$\Theta_{l,\theta}$.
Another way of extending barriers to larger sets is by the pasting 
Lemma~\ref{lem-pasting}.
See Bj\"orn--Bj\"orn--Gianazza~\cite[Proposition~4.1]{BBG} for a similar result.

\begin{lem} \label{lem-scaling-gen}
Let $p\ne2$ and  $a,b,l,\de,\theta>0$.
For a function $u$, defined in
the set 
\[
\Theta := \{(x,t): 
0 < -t < \theta|x|^l
\text{ and } |x|<\de\},
\]
let 
\[
    \ut(x,t)= A u(ax,bt),
    \quad \text{with } A=\biggl( \frac{b}{a^p} \biggr)^{1/(p-2)},
\]
which is defined in 
\[
\Thetat := \{(x,t): 
0 < -t < \thetat |x|^l
\text{ and } |x|<\de/a\},
\quad \text{with } \thetat = \frac{a^l\theta}{b}.
\]
Then the following are true\/\textup:
\begin{enumerate}
\item \label{s-soln}
$u$ is a solution in $\Theta$ if and only
if $\ut$ is a solution in $\Thetat$.
\item \label{s-supersoln}
$u$ is a supersolution in $\Theta$ if and only
if $\ut$ is a supersolution in $\Thetat$.
\item \label{s-superparabolic}
$u$ is \p-superparabolic in $\Theta$ if and only
if $\ut$ is a \p-superparabolic in $\Thetat$.
\end{enumerate}

Consequently, $u$ is a barrier at $(0,0)$ for 
$\Theta$ if and only if $\ut$ is a barrier for $\Thetat$.
Similarly, there is a barrier family at $(0,0)$ for 
$\Theta$ if and only if there is a barrier family for $\Thetat$.

Moreover, for $f \in C(\bdy \Theta)$ and 
$\tilde{f}(x,t) :=A f(ax,bt)\in C(\bdy \Thetat)$, 
we have
\begin{equation} \label{eq-scaling-newer}
 \uP_{\Thetat}  \tilde{f}(x,t) 
  = A \uP_{\Theta} f(ax,bt).
\end{equation}

In particular, 
if $a=\de$ and $b=\de^l$ then $\Thetat =  \Theta_{l,\theta}$.
Similarly, $\Theta =\Theta_{l,\theta}$
and  $\Thetat=\Theta_{l,1}$  when $a=\de=1$ and
$b=\theta$.
\end{lem}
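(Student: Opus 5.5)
The plan is to verify the scaling invariance directly at the level of the weak formulation, and then transfer it to all the derived notions: \p-superparabolicity, barriers, barrier families and Perron solutions.

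First I check that the map $(x,t)\mapsto(ax,bt)$ sends $\Thetat$ onto $\Theta$. Indeed, $0<-bt<\theta a^l|x|^l$ is equivalent to $0<-t<\thetat|x|^l$ with $\thetat=a^l\theta/b$, and $|ax|<\de$ is equivalent to $|x|<\de/a$. Cylinders $U\times(t_1,t_2)\Subset\Theta$ correspond to cylinders $a^{-1}U\times(t_1/b,t_2/b)\Subset\Thetat$, and likewise for space-time boxes. Function spaces are preserved, with $L^p(W^{1,p})$ and $C(L^2)$ membership unchanged.

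For \ref{s-soln} and \ref{s-supersoln}, take a test function $\phi$ for $\ut$ and set $\psi(y,s)=\phi(y/a,s/b)$, which is a test function on the corresponding cylinder in $\Theta$, nonnegative if $\phi$ is. We have $\nabla\ut=Aa\nabla u(ax,bt)$ and $\nabla\phi=a\nabla\psi$. Changing variables $y=ax$, $s=bt$ gives $dx\,dt=a^{-n}b^{-1}dy\,ds$. The gradient term becomes
\[
A^{p-1}a^{p}a^{-n}b^{-1}\iint|\nabla u|^{p-2}\nabla u\cdot\nabla\psi\,dy\,ds .
\]
The time term, using $\partial_t\phi=b\,\partial_s\psi$, becomes $A\,a^{-n}\iint u\,\partial_s\psi\,dy\,ds$. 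The two coefficients agree precisely when $A^{p-2}=b/a^p$, which is the choice of $A$ in the statement; this uses $p\ne2$. Since $A>0$, both signs and equalities are preserved, and the argument is symmetric because the inverse map has the same form. Continuity is clearly preserved, so \p-parabolic functions correspond to \p-parabolic functions.

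For \ref{s-superparabolic}, lower semicontinuity and finiteness on a dense set are preserved under a homeomorphism combined with a positive multiple. For the comparison principle, a box $Q_{t_1,t_2}\Subset\Thetat$ maps to a box in $\Theta$, and parabolic boundaries map to parabolic boundaries since $b>0$ preserves time orientation. A \p-parabolic $h$ on the box is sent by the inverse scaling to a \p-parabolic function, by \ref{s-soln}. The inequality $h\le\ut$ on the parabolic boundary therefore transfers, we apply comparison for $u$, and transfer back.

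The barrier statements then follow. Positivity, the limit $0$ at the origin, and positive liminf at the other boundary points are all preserved, because the map fixes $(0,0)$ and is a homeomorphism of closures. For barrier families I replace $w_j$ by $\ut_j$. The quantitative conditions ``$\ge k$ when $|\xi|\ge1/k$'' change only by the constants $A$, $a$, $b$, so I pass to a subsequence or reindex: given $k$, choose $k'$ with $k'\ge k/A$ and $1/k'\le\min\{a,b,1\}/k$.

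For \eqref{eq-scaling-newer}, the map $u\mapsto\ut$ is a bijection between $\UU_f(\Theta)$ and $\UU_{\tilde f}(\Thetat)$. This uses \ref{s-superparabolic} and the fact that the boundary liminf conditions transform consistently. Taking infima gives the identity. The final special cases are direct substitutions: $a=\de$, $b=\de^l$ give $\thetat=\theta$ and radius $1$; $a=\de=1$, $b=\theta$ give $\thetat=1$.

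The main, if modest, obstacle is the bookkeeping of the exponents in the weak formulation, together with the reindexing of barrier families; everything else is routine transport of structure.
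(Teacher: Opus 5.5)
Your proposal is correct and is essentially the paper's proof. Both change variables in the weak formulation and observe that the gradient and time terms acquire the same positive factor exactly when $A^{p-2}=b/a^p$. Both then transfer \p-superparabolicity through the comparison property on scaled boxes, and obtain \eqref{eq-scaling-newer} from the fact that $u\mapsto \tilde u$ is a bijection of the upper classes. You give more detail than the paper on the barrier and barrier-family statements. Note that no reindexing is actually needed there: your choice of $k'$ already shows that the family $\{\tilde w_j\}$ itself satisfies the definition.
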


\begin{proof}
The change of variables $(x,t) \mapsto (x/a,b/t)$, 
and replacing $u$ by $\ut$ and $\phi$ by $    \phit(x,t):=  \phi(ax,bt)$,
multiplies the integrand in the first
double integral in~\eqref{eq-def-soln} by $A^{p-1}a^p=Ab$ and 
the integrand in the second double integral in~\eqref{eq-def-soln} by $Ab$.
Hence the sign of the left-hand side in~\eqref{eq-def-soln} remains the same.
This proves \ref{s-soln} and \ref{s-supersoln}.

Part~\ref{s-superparabolic} now follows since part~\ref{def-comp}
in Definition~\ref{def:superparabolic} remains invariant
under this change of variables, by~\ref{s-soln},
and the other two properties are trivially invariant.
The identity \eqref{eq-scaling-newer} then follows directly from the definition of
Perron solutions.
\end{proof}

\begin{cor} \label{cor-indep-theta}
Assume that $l,\theta_1,\theta_2>0$ and $p \ne 2$. 
Then $(0,0)$ is regular 
with respect to  $\Theta_{l,\theta_1}$ 
if and only if it is
regular 
with respect to  $\Theta_{l,\theta_2}$. 
\end{cor}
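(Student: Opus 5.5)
The plan is to reduce to $\theta=1$ with Lemma~\ref{lem-scaling-gen}, using $a=1$ and $b=\theta$. With these choices $\Theta=\Theta_{l,\theta}$ and $\Thetat=\Theta_{l,1}$, since $\thetat=a^l\theta/b=1$. Because $p\ne2$, the constant $A=(b/a^p)^{1/(p-2)}=\theta^{1/(p-2)}$ is a well-defined positive number. I will use two transfers of regularity: one through barrier families, one through Perron solutions.

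\emph{Barrier families.} Lemma~\ref{lem-scaling-gen} already says that a barrier family at $(0,0)$ exists for $\Theta_{l,\theta}$ if and only if one exists for $\Theta_{l,1}$. Theorem~\ref{thm:barrier-char} then gives that $(0,0)$ is regular for $\Theta_{l,\theta}$ if and only if it is regular for $\Theta_{l,1}$. Applying this with $\theta=\theta_1$ and with $\theta=\theta_2$ and chaining the two equivalences through $\Theta_{l,1}$ proves the corollary.

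The one point to verify is that the statement about barrier families really holds here, because the map $(x,t)\mapsto(ax,bt)$ with $a=1$, $b=\theta$ distorts distances. Under $\ut(x,t)=Au(x,\theta t)$:
\begin{itemize}
\item positivity and \p-superparabolicity are preserved, by part~\ref{s-superparabolic};
\item the limit $0$ at the origin is preserved, since the origin is fixed;
\item positive lower limits at the other boundary points are preserved, since the map is a homeomorphism of the closures;
\item the quantitative requirement ``$\liminf \ge k$ when $|\xi|\ge 1/k$'' is preserved after reindexing, using two facts. First, $|\xi|$ and $|(x,\theta t)|$ are comparable up to the factor $\max\{\theta,1/\theta\}$. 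Second, $A$ is a fixed positive constant, so for a target level $k'$ one picks $j$ with the original family at level $k'/A$ and radius $\min\{\theta,1/\theta\}/k'$.
\end{itemize}
This step is routine, and I expect no real obstacle.

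\emph{Perron solutions.} Alternatively, one can argue directly with \eqref{eq-scaling-newer}. Given $f\in C(\bdy\Theta_{l,\theta})$, set $\tilde f(x,t)=Af(x,\theta t)$; then $f\mapsto\tilde f$ is a bijection onto $C(\bdy\Theta_{l,1})$. By \eqref{eq-scaling-newer},
\[
\uP_{\Thetat}\tilde f(x,t)=A\,\uP_{\Theta}f(x,\theta t).
\]
As $(x,t)\to(0,0)$ in $\Theta_{l,1}$, the point $(x,\theta t)$ tends to $(0,0)$ in $\Theta_{l,\theta}$, and conversely. Therefore $\uP_{\Thetat}\tilde f\to\tilde f(0,0)$ if and only if $\uP_{\Theta}f\to f(0,0)$. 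This gives the equivalence of regularity directly from Definition~\ref{def:regular}.
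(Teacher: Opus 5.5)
Your proposal is correct and is essentially the paper's argument: the paper reads the corollary off from Lemma~\ref{lem-scaling-gen} with $a=\de=1$ and $b=\theta$ (so that $\Theta=\Theta_{l,\theta}$ and $\Thetat=\Theta_{l,1}$), either via the transfer of barrier families together with Theorem~\ref{thm:barrier-char}, or directly via \eqref{eq-scaling-newer}. In your reindexing check, level and radius in the barrier-family definition are governed by a single $k$, so you should simply take $k\ge k'\max\{1/A,\theta,1/\theta\}$.
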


\begin{remark} \label{rmk-n=1}
When $n=1$, the ``soda can domain'' $\Theta_{l,\theta}$ is not a domain as 
it has two components, with 
the origin as a common boundary point.
Nevertheless, 
the regularity of the origin with respect to 
each component, and thus also with respect to $\Theta_{l,\theta}$,
follows e.g.\ from
the exterior ball condition 
(Proposition~4.1 
in Bj\"orn--Bj\"orn--Gianazza--Par\-vi\-ain\-en~\cite{BBGP}).
\end{remark}

\section{Proofs of Propositions~
\ref{prop-l>p-reg}--\ref{prop-soda-irreg}
}
\label{sect-pf}

\begin{proof}[Proof of Proposition~\ref{prop-l>p-reg}]
By Corollary~\ref{cor-indep-theta} and the comment after Theorem~\ref{thm:barrier-char},
we can assume that $0<\theta<1$ and $p<l<2$.
For integers $j>0$, let 
\[
   u_j(x,t)= j(|x|^\al - (-t)^\al),
   \quad \text{where $\al=1-\frac{p}{l}>0$.}
\]
Note that for $0<-t \le \theta |x|^l \le \theta$,
\begin{equation}    \label{eq-uj>0}
u_j(x,t) \ge j(|x|^{\al} - (\theta|x|^l)^\al) 
\ge j|x|^\al (1-\theta^{\al l}) > 0,
\end{equation}
since $l>1$ and $\theta^{\al l} <1$.
Moreover, 
\begin{equation} \label{eq-1.2}
\lim_{(x,t)\to(0,0)} u_j(x,t)=0.
\end{equation}
Lemma~\ref{lem-Delta-powers} implies that
\begin{align}
\partial_t u_j 
- \Delta_p u_j &= j\al (-t)^{\al-1} - (j\al)^{p-1}\de |x|^{(\al-1)(p-1)-1} \nonumber \\
    &= j\al (-t)^{\al-1} \bigl( 1 - (j\al)^{p-2}\de (-t)^{1-\al}|x|^{(\al-1)(p-1)-1} \bigr),
\label{eq-Cu}
\end{align}
where $\de=n+(\al-1)(p-1)-1$. 
Now, for $(x,t) \in \Theta_{l,\theta}$,
\[
(-t)^{1-\al}|x|^{(\al-1)(p-1)-1} < |x|^{l(1-\al)+(\al-1)(p-1)-1} \le 1,
\]
because $\theta <1$ and the exponent 
\[ 
l(1-\al)+(\al-1)(p-1)-1 
= l \frac{p}{l} + \al(p-1)-p
= \al(p-1)>0.
\]

Thus, using that $\al>0$ is fixed and $p-2<0$, we conclude from~\eqref{eq-Cu}
that $u_j$ is 
a continuous supersolution in $\Theta_{l,\theta}$ for 
all sufficiently large $j$.
By  the comparison principle in
Korte--Kuusi--Par\-vi\-ain\-en~\cite[Lemma~3.5]{KoKuPa10}, 
these $u_j$ are \p-superparabolic functions and it follows from \eqref{eq-uj>0}
and \eqref{eq-1.2} that they
form a barrier family for $\Theta_{l,\theta}$ at $(0,0)$.
Theorem~\ref{thm:barrier-char} then shows that $(0,0)$ is regular.
\end{proof}

In the proof of Proposition~\ref{prop-soda-p>n} we 
shall use the following
formula for the \p-Laplacian in polar coordinates.

\begin{lem} \label{lem-Deltap-polar}
Assume that $u,v \in C^1((0,\infty))$ and 
let\/ {\rm(}by abuse of notation\/{\rm)} 
$u(x)=u(|x|)$ and $v(x)=v(|x|)$ for $x \in\Rn \setm \{0\}$.
Then, for $x \in\Rn \setm \{0\}$,
\begin{equation}  \label{eq-n}
   \nabla u(x) = u'(|x|) \frac{x}{|x|}
\quad \text{and} \quad 
   \Div \biggl(v(x)\frac{x}{|x|}\biggr)  = v'(|x|) + v(x) \frac{n-1}{|x|}.
\end{equation}

If moreover, $u\in C^2((0,\infty))$ and $u' \ge 0$, then
\begin{equation} \label{eq-Deltap}
   \Delta_p(u(x))= u'(|x|)^{p-2} \biggl( (p-1) u''(|x|) + u'(|x|) \frac{n-1}{|x|}\biggr).
\end{equation}
\end{lem}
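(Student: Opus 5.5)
The identities in \eqref{eq-n} and \eqref{eq-Deltap} are pure calculus, so I would verify them by direct differentiation away from the origin, where $|x|$ is smooth. The only fact needed is $\nabla|x| = x/|x|$ for $x\ne0$.

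First, by the chain rule, $\partial_{x_i} u(|x|) = u'(|x|)\,x_i/|x|$, which is the first formula in \eqref{eq-n}. For the divergence formula I would use the product rule:
\[
\Div\biggl(v(x)\frac{x}{|x|}\biggr) = \nabla\bigl(v(|x|)\bigr)\cdot\frac{x}{|x|} + v(|x|)\,\Div\frac{x}{|x|}.
\]
The first term equals $v'(|x|)\,|x|^2/|x|^2 = v'(|x|)$. For the second term, $\Div x = n$ and $\nabla(|x|^{-1})\cdot x = -|x|^{-3}x\cdot x = -|x|^{-1}$, so
\[
\Div\frac{x}{|x|} = \frac{n}{|x|}-\frac{1}{|x|}=\frac{n-1}{|x|}.
\]
This gives the second formula in \eqref{eq-n}.

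For \eqref{eq-Deltap}, assume $u'\ge0$ and $u\in C^2$. By the first part, $|\nabla u| = |u'(|x|)| = u'(|x|)$. Hence
\[
|\nabla u|^{p-2}\nabla u = v(|x|)\,\frac{x}{|x|}, \quad\text{where } v := (u')^{p-1}.
\]
Applying the divergence formula from \eqref{eq-n} with this $v$ gives
\[
\Delta_p u = v'(|x|) + v(|x|)\frac{n-1}{|x|},
\qquad v' = (p-1)(u')^{p-2}u''.
\]
Factoring out $(u')^{p-2}$ yields exactly \eqref{eq-Deltap}.

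The only delicate point is that $v=(u')^{p-1}$ must be $C^1$ in order to apply \eqref{eq-n}. This is automatic where $u'>0$. Where $u'$ vanishes, $v$ may fail to be differentiable when $1<p<2$. In that case I would either read \eqref{eq-Deltap} at points with $u'(|x|)>0$, or interpret it with the convention that the right-hand side is understood where it makes sense (for $p\ge2$ it is continuous). The sign condition $u'\ge0$ is what allows $|u'|^{p-2}u'$ to be written as $(u')^{p-1}$ without absolute values. I expect no real obstacle beyond this bookkeeping.
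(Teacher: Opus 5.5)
Your proposal is correct and follows the paper's proof: verify \eqref{eq-n} by direct differentiation, then apply the divergence formula with $v=(u')^{p-1}$ and factor out $(u')^{p-2}$. Your caveat is also a fair one, and the paper leaves it implicit: when $1<p<2$, $(u')^{p-1}$ need not be $C^1$ at zeros of $u'$, and there the right-hand side of \eqref{eq-Deltap} is not even defined.
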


\begin{proof}
The formulas~\eqref{eq-n} are clear by direct calculation.
Thus, for \eqref{eq-Deltap}, we have with $v(x)= u'(|x|)^{p-1}$,
\begin{align*} 
   \Delta_p(u(x))&
= \Div \biggl(u'(|x|)^{p-1}  \frac{x}{|x|} \biggr) 
 = (p-1) u'(|x|)^{p-2}u''(|x|) + u'(|x|)^{p-1} \frac{n-1}{|x|}\\
&= u'(|x|)^{p-2} \biggl( (p-1) u''(|x|) + u'(|x|) \frac{n-1}{|x|}\biggr).
\qedhere
\end{align*}
\end{proof}

\begin{proof}[Proof of Proposition~\ref{prop-soda-p>n}]
Note that
\[
 \Theta_{l,\theta}  \subset 
   \Thetat  :=\{x : 0 < |x| < 1\} \times (-\theta,0)=: G \times (-\theta,0),
\]
where $\Thetat$ is a (punctured) cylinder and $G$ is a punctured ball.
Therefore, by the comments after Theorem~\ref{thm:barrier-char},
it suffices to show that the origin is regular for $\Thetat$.
We shall construct a barrier family $\{w_j\}_{j=1}^\infty$ in $\Thetat$
at $\xi_0=(0,0)$.
Fix a positive integer~$j$.

First, we aim to find a 
radially symmetric function $u_j\in C^2((0,\infty))$  
such that, with $u_j(x)=u_j(|x|)$ (by abuse of notation),
\[
\Delta_p u_j = -j 
\text{ in } G, \quad
u_j(0)=0
\quad \text{and} \quad  u_j(1)=a_j, 
\]
where $a_j \ge j$ will be chosen later.
By Lemma~\ref{lem-Deltap-polar} and assuming that $u_j'\ge0$,
\[
\Delta_p u_j = (u_j')^{p-2} \biggl( (p-1)u_j'' + \frac{n-1}{r} u_j' \biggr),
\]
where the radial derivatives are with respect to $r=|x|$.
Letting $v_j=u_j'$, we therefore want to solve the equation
\begin{equation}   \label{eq-p-Lapl-for-v}
v_j^{p-2} \Bigl( (p-1)v_j' + \frac{n-1}{r} v_j \Bigr) = -j. 
\end{equation}
With $z_j:=r^{(n-1)/(p-1)}v_j$, the left-hand side can be rewritten as
\[
(p-1) (r^{(n-1)/(p-1)}v_j)^{p-2} \frac{(r^{(n-1)/(p-1)} v_j)'}{r^{n-1}}
=\frac{(z_j^{p-1})'}{r^{n-1}}
\]
and so \eqref{eq-p-Lapl-for-v} becomes $(z_j^{p-1})' = -j r^{n-1}$.
We are interested in $z_j\ge0$. Hence
\[
z_j = \biggl( C_j - \frac{jr^n}{n} \biggr)^{1/(p-1)},
\quad \text{where}  \quad
C_j \ge  \frac{j}{n}. 
\]
It follows that
\[
u_j' = v_j = r^{(1-n)/(p-1)} z_j 
= \biggl( C_j r^{1-n} - \frac{jr}{n} \biggr)^{1/(p-1)}.
\]
Since we require $u_j(0)=0$ and $u_j(1)>0$, 
this yields a general radially symmetric solution
\[
u_j(x) = u_j(|x|) 
= \int_0^{|x|} \biggl( C_j \rho^{1-n} 
- \frac{j\rho}{n} \biggr)^{1/(p-1)}\,d\rho,
\]
where $C_j \ge j/n$
should be chosen so that it gives the correct value $u_j(1)=a_j$.
This is possible provided that
\[
   a_j \ge 
a_j':=\int_0^{1} \biggl( \frac{j\rho^{1-n}}{n} - \frac{j\rho}{n} \biggr)^{1/(p-1)}\,d\rho.
\]
Note that $a_j'<\infty$ since $p>n$.

We let $a_j=\max\{j,a_j'\}$ and choose $C_j$ accordingly.
Then $u_j$ is a \p-supersolution in $G$ (i.e.\ $\Delta_p u_j \le 0$),  while
$\phi_j(x):=j|x|$ 
is a \p-subsolution in $G$ by Lemma~\ref{lem-Delta-powers}
or~\ref{lem-Deltap-polar}.
Note that  $u_j(0)=\phi_j(0)=0$ 
and $u_j(1)=a_j \ge \phi_j(1)$. 
When $n \ge 2$, it follows from the comparison Lemma~3.18 in 
Heinonen--Kilpel\"ainen--Martio~\cite{HeKiMa} (and continuity)
that  $u_j \ge \phi_j$ in $G$.
The case $n=1$ is not considered in~\cite{HeKiMa},
but then
$u_j$ is concave on $(0,1)$ while $\phi_j$ is convex,
so $u_j \ge \phi_j$ in $G$ also in this case. 
Next define
\[
w_j(x,t)=u_j(x)-j t \ge 0
\quad \text{in } \Thetat.
\]
Then
\[
\Delta_p w_j
=-j=\partial_t w_j 
\quad \text{in } \Thetat, 
\]
and $w_j$ is therefore \p-parabolic in $\Thetat$. 
It follows that $\{w_j\}_{j=1}^\infty$
is a barrier family, and thus $\xi_0$ is regular with respect to $\Thetat$, 
by Theorem~\ref{thm:barrier-char}.
\end{proof}

\begin{remark}
A more explicit barrier family than the one constructed
above for $p > n$ can be based on the radially symmetric functions
\[
\ut(x)= \ut(|x|)= a|x|^{(p-n)/(p-1)} - b|x|^{p/(p-1)},
\quad \text{with suitable } a,b>0.
\]
We leave the details to the interested reader.
\end{remark}

\begin{remark}
The barrier family constructed in the proof of Proposition~\ref{prop-soda-p>n}
shows that any point in the lateral boundary of any parabolic cylinder 
(even noncircular) is regular if $p>n$.
(Using a simple scaling argument it applies also to larger cylinders.)
Hence, Theorem~6.5 in Kilpel\"ainen--Lindqvist~\cite{KiLi96} 
and Theorem~3.9 in~\cite{BBGP} hold for $p>n$.
(The proofs presented therein 
rely on the  existence result from the book 
Mal\'y--Ziemer~\cite[Theorem~6.21]{MaZi}, 
which implicitly assumes that $p\le n$,
and thus cannot be used for $p>n$.
For $p \le n$,  the proofs in \cite{KiLi96}  and \cite{BBGP} rely also
on the regularity
result \cite[Corollary~6.22]{MaZi},  while this is trivial for $p>n$.)
\end{remark}

\begin{proof}[Proof of Proposition~\ref{prop-soda-irreg}]
By Corollary~\ref{cor-indep-theta}, 
we may assume that $\theta=1$.
Consider the function
\[
   u(x,t)= C \biggl( \frac{-t}{|x|^l} \biggr)^{\be},  
\]
where $C>0$ will be chosen later and $\be:=1/(2-p)>0$.
First we shall
see that $u$ is \p-superparabolic in $\Theta_{l,1}$.

By Lemma~\ref{lem-Delta-powers} with $\al=-\be l$,
\[
   \Delta_p u = - C^{p-1}(-t)^{\be (p-1)} (\be l) ^{p-1} \de |x|^{-{\be l (p-1) -p}},
\]
where $\de:= n- (\be l +1)(p-1) -1 >0$ 
by the choice of $l$.
Also
\[
    \partial_t u  = - C\be (-t) ^{\be -1} |x|^{-\be l}. 
\]
Thus, using that $\be(p-2)=-1$,
together with
$|x|^{l-p}\ge1$ in $\Theta_{l,1}$ (because $l \le p$), we get
\begin{align*}
  \partial_t u - \Delta_p u 
  & = C(-t) ^{\be -1} |x|^{-\be l}
     (-\be + C^{p-2}(-t)^{\be(p-2) +1} \de (\be l)^{p-1} |x|^{-\be l (p-2) -p}) \\
  & = C(-t) ^{\be -1} |x|^{-\be l} 
     (-\be + C^{p-2} \de (\be l) ^{p-1} |x|^{l -p}) \\
  &    \ge  C(-t) ^{\be -1} |x|^{-\be l} 
     (-\be + C^{p-2} \de (\be l)^{p-1}),
\end{align*}
which is $0$ if we choose $C>0$ so that the last bracket becomes
$0$ (which is possible).
Thus $u$ is a continuous supersolution in $\Theta_{l,1}$ and 
is therefore \p-superparabolic therein, by  the comparison principle in
Korte--Kuusi--Par\-vi\-ain\-en~\cite[Lemma~3.5]{KoKuPa10}.

Now let  $f \in C(\bdy \Theta_{l,1})$ be defined by
\[
   f(x,t)=\begin{cases}
     u(x,t), & \text{if } t <0, \\
     C(1-|x|), & \text{if } t=0,
     \end{cases}
\]
which indeed is a continuous function on $\bdy \Theta_{l,1}$.

If $v \in \LL_f$, then the parabolic comparison principle
(Theorem~\ref{thm-parabolic-comparison})
shows that $v \le u$ in $\Theta_{l,1}$. 
Taking the supremum over all such $v$ gives $0 \le \lP f \le u$.
Thus
\[
    0 \le \liminf_{t \to 0-} \lP f(x,t)
    \le 
    \liminf_{t \to 0-}  u(x,t)
    = 0, \quad \text{if } 0 < |x| < 1,
\]
and so
\[
    \liminf_{\Theta_{l,1} \ni (x,t) \to (0,0)} \lP f(x,t)
    = 0 
    < f(0,0),
\]
which shows that $(0,0)$ is irregular for $\Theta_{l,1}$.
\end{proof}

\section{The heat equation and Proposition~\ref{prop-irreg-EG}}
\label{sect-heat}

In order to prove Proposition~\ref{prop-irreg-EG},
we will use the 
Wiener  criterion from 
Lanconelli~\cite[Corollario~1.2, with $\la=\tfrac12$]{Lanc73} and
Evans--Gariepy~\cite[Theorem~1, (1.11) with $\la=2$]{EvGa}.
It  shows that 
the origin is regular for the heat equation in
$\Theta$, with $(0,0) \in \bdy \Theta$,
if and only if
\begin{equation}  \label{eq-Wiener-EG}
\sum_{k=1}^\infty 2^k \capp \bigl( \itoverline{A_k \setm A_{k+1}}\setm 
\Theta\bigr) = \infty,
\end{equation}
where $\capp$ denotes the thermal (or parabolic) capacity
(see \eqref{deff-capp} below),
\[
A_k = \{(x,t) \in \R^n \times (-\infty,0): F(x,-t) \ge 2^{k}\},
\]
 and 
\[
F(x,t) = \biggl( \frac{1}{4\pi t} \biggr)^{n/2} e^{-|x|^2/4t}, 
\quad t>0,
\]
is the fundamental solution for the heat equation.
Lanconelli~\cite{Lanc73} showed the necessity of the Wiener criterion
(i.e.\ that a point is irregular if the
sum in \eqref{eq-Wiener-EG} converges). 
The sufficiency was later shown by Evans and Gariepy~\cite{EvGa}.

The thermal capacity $\capp$ 
is defined by means of parabolic potentials.
Namely, for a compact set $K$,
\begin{equation} \label{deff-capp}
\capp(K)= \sup \mu(K),
\end{equation}
where the supremum is taken over all measures $\mu$ supported on $K$
with 
\begin{equation}  \label{eq-par-pot}
P^\mu(x_0,t_0):=
\int_{\R^n\times(-\infty,t_0)} \frac{e^{-|x-x_0|^2/4(t_0-t)}}{(4\pi(t_0-t))^{n/2} } \, d\mu(x,t)
\le 1
\end{equation}
for all  $(x_0,t_0)\in \R^{n+1}$.

The following estimates for capacities of cylinders will
be crucial when applying the Wiener criterion.
Below, we write $B(0,r)=\{x \in \Rn : |x|<r\}$.
We also write $a \simle b$ and $b \simge a$
if there is an implicit comparison constant $C>0$ such that $a \le Cb$, 
and $a \simeq b$ if $a \simle b \simle a$.

\begin{lem} \label{lem-heat-cyl}
Consider the parabolic cylinder
$C_{r,h}=\itoverline{B(0,r)}\times [0, h]$.
Then
\begin{alignat}{2}
    \capp(C_{r,h}) & \simeq r^n, &\quad& \text{if } 0 \le h \le r^2, 
\label{eq-heat-cyl-1}\\
 \capp(C_{r,h}) & \simeq h r^{n-2}, &\quad& \text{if } h \ge r^2 \text{ and } n\ge3, 
\label{eq-heat-cyl-n>=3} \\
\frac{h}{1+\log
(h/r^2)} \simle   
 \capp(C_{r,h}) & \simle h, &\quad& \text{if } h \ge r^2 \text{ and } n=2.
\label{eq-heat-cyl-n=2}
\end{alignat}
\end{lem}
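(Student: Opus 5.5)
Since $\capp(K)=\sup\mu(K)$ over measures on $K$ with $P^\mu\le1$ everywhere, the lower bounds will come from exhibiting a good test measure, and the upper bounds from a duality/test-point argument. I will take $\mu=c\,\mathcal{L}^{n+1}|_{C_{r,h}}$, normalised Lebesgue measure on the cylinder. Then $\capp(C_{r,h})\ge c\,|C_{r,h}|\simeq c\,r^nh$ whenever $c$ is chosen with $\sup P^\mu\le1$.

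The potential of this measure at $(x_0,t_0)$ is at most $c\int_0^{\min\{h,\,t_0\}}\int_{B(0,r)}F(x_0-x,s)\,dx\,ds$, where $s=t_0-t$. Since $\int_{B(0,r)}F(\cdot,s)\,dx\le\min\{1,\,C r^n s^{-n/2}\}$, this gives
\[
P^\mu\le c\int_0^h\min\{1,\,Cr^ns^{-n/2}\}\,ds .
\]
I then evaluate this integral case by case.
\begin{itemize}
\item If $h\le r^2$, the integral is at most $h$, so $c=1/h$ works and $\capp\simge r^n$.
\item If $h\ge r^2$ and $n\ge3$, the integral is $\simle r^2+r^n\int_{r^2}^\infty s^{-n/2}\,ds\simeq r^2$. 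So $c\simeq r^{-2}$ and $\capp\simge hr^{n-2}$.
\item If $n=2$, the integral is $\simle r^2(1+\log(h/r^2))$. So $c\simeq r^{-2}/(1+\log(h/r^2))$, giving $\capp\simge h/(1+\log(h/r^2))$.
\end{itemize}

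For the upper bounds, let $\mu$ be admissible on $C_{r,h}$ and test $P^\mu\le1$ at suitable points to bound $\mu(C_{r,h})$.

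Case $h\le r^2$. Take $t_0=h+r^2$ and average $P^\mu(x_0,t_0)\le1$ over $x_0\in B(0,r)$. For every $(x,t)\in C_{r,h}$ we have $t_0-t\in[r^2,2r^2]$, so
\[
\int_{B(0,r)}F(x_0-x,t_0-t)\,dx_0\simge1 .
\]
Fubini then gives $\mu(C_{r,h})\simle|B(0,r)|\simeq r^n$.

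Case $h\ge r^2$. Slice $[0,h]$ into $N\simeq h/r^2$ time layers of length $r^2$. Each layer is a cylinder $C_{r,r^2}$, so by the previous case it carries mass $\simle r^n$ for $n\ge 3$, giving $\mu(C_{r,h})\simle Nr^n\simeq hr^{n-2}$. For $n=2$ the same slicing only gives $\simle h$, which is exactly the claimed upper bound, since $Nr^2\simeq h$. Subadditivity of the mass over the layers is immediate, because each restriction of $\mu$ is still admissible.

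The main obstacle is making the averaging/test-point argument in the first case rigorous, in particular checking the lower bound on the averaged heat kernel uniformly over the cylinder; the rest is routine integral estimation. I also need to check that normalised Lebesgue measure satisfies $P^\mu\le1$ at points outside the cylinder and after $t=h$. This is handled because the bound $\int_{B(0,r)}F(x_0-x,s)\,dx\le\min\{1,Cr^ns^{-n/2}\}$ used above holds for all $x_0$ and $s>0$.
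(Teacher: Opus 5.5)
Your proof is correct, and it is more self-contained than the paper's, which relies on the literature at each step. The paper takes $\capp(C_{r,r^2})\simeq r^n$ and $\capp(C_{r,0})\simeq r^n$ from Lanconelli and Watson; the latter uses that $n$-dimensional Lebesgue measure on the flat disc is its capacitary distribution. It then obtains \eqref{eq-heat-cyl-1} by monotonicity and the upper bounds for $h\ge r^2$ from the cited subadditivity of $\capp$. For the lower bounds with $h\ge r^2$ it stacks flat-disc measures at the time levels $jr^2$, which gives $P^\mu\simle 1+\sum_{j=1}^k j^{-n/2}$.

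Your normalised $(n+1)$-dimensional Lebesgue measure is the continuous analogue of that stacking: the series becomes $\int s^{-n/2}\,ds$. Your upper-bound arguments are genuinely different. Averaging $P^\mu(\cdot,h+r^2)\le 1$ over $B(0,r)$, and slicing by restricting $\mu$ to time layers, replace the cited upper bound and the cited subadditivity by short direct duality arguments. The step you flag as the main obstacle is immediate. For $x,x_0\in\itoverline{B(0,r)}$ and $r^2\le s\le 2r^2$ one has $F(x_0-x,s)\ge e^{-1}(8\pi r^2)^{-n/2}$, so the averaged kernel is bounded below by a constant depending only on $n$.

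The one thing you miss is the endpoint $h=0$, which the statement includes. There $C_{r,0}$ has zero $(n+1)$-dimensional measure and $c=1/h$ is undefined, so your lower bound says nothing. Monotonicity cannot rescue it, since $C_{r,0}\subset C_{r,h}$. Instead, use $n$-dimensional Lebesgue measure on $\itoverline{B(0,r)}\times\{0\}$. Its potential satisfies $\int_{B(0,r)}F(x_0-x,t_0)\,dx\le\int_{\Rn}F(y,t_0)\,dy=1$, so it is trivially admissible and gives $\capp(C_{r,0})\ge|B(0,r)|$. This is exactly the flat-disc ingredient the paper uses.
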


The formula~\eqref{eq-heat-cyl-1} was shown by
Lanconelli~\cite[Proposizione~5.1]{Lanc73}.
Note that the upper bound in~\eqref{eq-heat-cyl-n>=3}  is better
than the upper bound in \cite[Proposizione~6.1]{Lanc73} when $n\ge 3$.
In Avelin--Kuusi--Par\-vi\-ain\-en~\cite[Theorem~4.9 and
Corollary~2]{AvelinKP15}, 
the formulas \eqref{eq-heat-cyl-1}--\eqref{eq-heat-cyl-n>=3}
were obtained
for the corresponding nonlinear \p-parabolic capacity when $2 \le p <n$, including
the case $p=2$. 
Rather than letting the reader study this sophisticated nonlinear paper, we give a
more elementary proof only covering $p=2$, which is enough for our purposes.

\begin{proof}
It is well known and easy to see
(cf.\ \cite[Proposizione~4.1(ii)]{Lanc73})
that 
\[
\capp (C_{r,r^2}) \simeq r^n.
\]
On the other hand, 
by \cite[Proposizione~5.1]{Lanc73} (or Watson~\cite[Theorem~9]{watson78} or
\cite[Theorem~7.55]{watson}),
\[
\capp (C_{r,0}) \simeq r^n.
\]
Since $\capp$ is a monotone set function, this shows \eqref{eq-heat-cyl-1}.
(It follows from \cite[Lemma~4]{watson78} or
\cite[Lemmas~7.37 and~7.54]{watson} that the thermal 
capacity in \cite{watson78} and \cite{watson} (with $E=\R^{n+1}$)
coincides with the one defined in~\eqref{deff-capp}.)

Next, the upper bounds in \eqref{eq-heat-cyl-n>=3}
and \eqref{eq-heat-cyl-n=2} follow
easily from the subadditivity of $\capp$
(\cite[Proposizione~1.1(ii)]{Lanc73} or \cite[Theorem~7.44]{watson}) as follows,
\[
\capp (C_{r,h}) \simle \frac{h}{r^2} \capp (C_{r,r^2})
\simeq \frac{h}{r^2} r^n = h r^{n-2}
\quad \text{when } h \ge r^2.
\]

For the lower bounds, 
assume that $h\ge r^2$ and
$n\ge2$.
Let $m_c$ be the $n$-dimensional Lebesgue measure restricted to $K_c:=B(0,r)\times \{c\}$.
By \cite[Proposizione~5.1]{Lanc73} (or \cite[Theorem~9]{watson78} or
\cite[Theorem~7.55]{watson} with $(a,b)=\R$), 
$m_c$ is the thermal capacitary distribution for $K_c$ and thus
\[
P^{m_c}(x_0,t_0) \le 1 \quad \text{for all } (x_0,t_0)\in \R^{n+1}\times (c,\infty)
\]
and $P^{m_c} (x_0,t_0)= 0$ for $t_0\le c$.
Moreover, for $t_0\in [c+kr^2,c+(k+1)r^2]$ with $k=1,2,\dots$ and all $x_0\in \R^n$,
\begin{equation*} 
P^{m_c}(x_0,t_0) \le P^{m_c}(0,t_0) 
           \le (4\pi kr^2)^{-n/2} \int_{B(0,r)} e^{-|x|^2/4(k+1)r^2}\, dx.
\end{equation*} 
The last integral is easily estimated using polar coordinates
and the elementary inequality $e^x \ge 1+x$  as
\begin{align*}
\int_{B(0,r)} e^{-|x|^2/4(k+1)r^2}\, dx
&\simle r^{n-2} \int_0^r e^{-\rho^2/4(k+1)r^2} \, \rho \,d\rho \\
&= 2(k+1) r^{n} (1-e^{-1/(k+1)}) \simle r^n.
\end{align*}
It thus follows that for the measure 
\[
\mu := \sum_{\substack{0\le jr^2\le h \\ j \in \Z}} m_{jr^2},
\]
obtained by placing copies 
of the $n$-dimensional Lebesgue measure 
at levels $jr^2$ in $C_{r,h}$,
the left-hand side  of~\eqref{eq-par-pot} can be estimated as
\begin{equation*} 
P^\mu(x_0,t_0)
\simle 1 + \sum_{j=0}^{k-1} 
\frac{r^n}{((k-j)r^2)^{n/2}}  
=  1 + \sum_{j=1}^k \frac{1}{j^{n/2}},
\end{equation*} 
where $k=\lfloor h/r^2 \rfloor$ is the largest integer $\le h/r^2$.

For $n\ge3$, the sum has an upper bound 
independent of $k$ and hence a multiple of $\mu$ is admissible 
in the definition of $\capp(C_{r,h})$.
It follows that 
\[
\capp(C_{r,h}) \simge \mu(C_{r,h}) \simeq kr^n \simeq h r^{n-2},
\]
which proves \eqref{eq-heat-cyl-n>=3}. 

Finally, for $n=2$ we instead get 
\[
P^\mu(x_0,t_0) 
\simle  1 + \log k \le 1+ \log(h/r^2)
\]
and consequently,
\[
\capp(C_{r,h}) \simge \frac{\mu(C_{r,h})}{1 + \log(h/r^2)} 
\simeq \frac{h}{1+\log(h/r^2)},
\]
which proves \eqref{eq-heat-cyl-n=2}.
\end{proof}

We are now ready to prove Proposition~\ref{prop-irreg-EG}.
Part~\ref{reg-n>=3} (i.e.\ $n \ge3$) will follow from the 
following more general result.

\begin{prop} \label{prop-Phi-rho}
Assume that $n\ge 3$ 
and let $\rho:(0,1]\to (0,\infty)$ be a nondecreasing function such that
$\lim_{\tau\to 0+} \rho(\tau) = 0$.
Then the origin is regular with respect to 
\begin{equation*} 
\Phi_{\rho} := \{(x,t) \in \R^{n} \times (-1,0): 
(-t)^{1/2} \rho(-t) <|x| < \rho(1)\}
\end{equation*}
if and only if 
\begin{equation}    \label{eq-div-int-rho}
\int_0^1 \rho(\tau)^{n-2} \, \frac{d\tau}{\tau} = \infty.
\end{equation}
\end{prop}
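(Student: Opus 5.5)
We apply the Wiener criterion \eqref{eq-Wiener-EG}, estimating each term with Lemma~\ref{lem-heat-cyl}.

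\emph{Shape of the heat balls.} The set $A_k$ is $\{(x,t): t<0,\ |x|^2 \le 2n(-t)\log(c_n 2^{-2k/n}/(-t))\}$ with $c_n=(4\pi)^{-1}$ (up to the normalisation). Its time extent is $-t\in(0,c2^{-2k/n}]$. Put $\tau_k:=2^{-2k/n}$. The annulus $A_k\setm A_{k+1}$ then lives at times $-t\lesssim \tau_k$ and has spatial width $\lesssim \tau_k^{1/2}$.

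The complement of $\Phi_\rho$ near the origin, intersected with $\{t<0\}$, is the set
\[
E=\{(x,t): |x|\le (-t)^{1/2}\rho(-t)\}.
\]
Since $\rho\to0$, for small $\tau$ the radius $\tau^{1/2}\rho(\tau)$ is much smaller than $\tau^{1/2}$, so $E$ lies inside the "core" region $|x|\ll (-t)^{1/2}$ of every heat ball. The part of $\itoverline{A_k\setm A_{k+1}}\setm\Phi_\rho$ therefore contains, and is contained in, a few pieces of the form $E\cap\{-t\in[a\tau_k,b\tau_k]\}$ with fixed $0<a<b$. Indeed, on the axis, $F(0,-t)\in[2^k,2^{k+1}]$ corresponds to $-t\simeq\tau_k$, up to the constant factor $2^{2/n}$. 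Near the axis, $F(x,-t)\simeq F(0,-t)$ because $|x|^2/(-t)\le\rho^2\to0$. I would verify these inclusions carefully; this is routine.

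\emph{Capacity bounds.} By monotonicity of $\rho$, such a piece lies between the cylinders $\itoverline{B(0,r_-)}\times[\,\cdot\,,\cdot\,+h]$ and $\itoverline{B(0,r_+)}\times[\,\cdot\,,\cdot\,+h]$, where $h\simeq\tau_k$, $r_-=(a\tau_k)^{1/2}\rho(a\tau_k)$ and $r_+=(b\tau_k)^{1/2}\rho(b\tau_k)$. Because $r_\pm^2\ll h$ and $n\ge3$, and capacity is translation invariant in $t$, \eqref{eq-heat-cyl-n>=3} gives capacities
\[
\simeq h r_\pm^{n-2}\simeq \tau_k^{n/2}\rho(c_\pm\tau_k)^{n-2}.
\]
The lower piece is contained in the set and the upper one contains it, so monotonicity yields two-sided bounds. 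Since $2^k\tau_k^{n/2}=1$, the Wiener sum is comparable, from above and below, to
\[
\sum_k \rho(c\,2^{-2k/n})^{n-2}
\]
for two different constants $c$. Because $\rho$ is monotone, each such sum is comparable to $\int_0^1\rho(\tau)^{n-2}\,d\tau/\tau$ via the standard Cauchy condensation, after shifting indices by a bounded amount. The divergence of \eqref{eq-div-int-rho} is therefore equivalent to \eqref{eq-Wiener-EG}, which proves both directions.

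\emph{Main obstacle.} The delicate step is the geometric comparison of $\itoverline{A_k\setm A_{k+1}}\setm\Phi_\rho$ with the cylinders. The heat ball boundary is not a cylinder, and the annulus $A_k\setm A_{k+1}$ also contains points with large $|x|$ and very small $-t$. I would show that $E$ meets only the near-axis part, where $-t\simeq\tau_k$. Concretely, for $(x,t)\in E$ we have $F(x,-t)\simeq(-t)^{-n/2}$ uniformly, since $e^{-\rho^2/4}\in[e^{-\rho(1)^2/4},1]$. Hence $(x,t)\in A_k\setm A_{k+1}$ forces $-t\simeq\tau_k$, with a fixed constant. This gives inclusions into a bounded number of adjacent $k$-pieces, and bounded overlap suffices.

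A secondary issue is the part of $\itoverline{A_k\setm A_{k+1}}\setm\Phi_\rho$ lying outside $E$, namely $\{|x|\ge\rho(1)\}$ or $t\le -1$. This part is disjoint from the small heat balls for large $k$ and so can be ignored.
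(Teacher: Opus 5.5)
Your proposal is correct and follows essentially the same route as the paper: the Wiener criterion \eqref{eq-Wiener-EG}, sandwiching $\overline{A_k\setminus A_{k+1}}\setminus\Phi_\rho$ between cylinders of height $\simeq 2^{-2k/n}$ via the monotonicity of $\rho$, the tall-cylinder estimate \eqref{eq-heat-cyl-n>=3}, and comparison of $\sum_k\rho(c\,2^{-2k/n})^{n-2}$ with the integral. The paper streamlines the bookkeeping by using one radius $r_k=t_k^{1/2}\rho(t_k)$ both for a lower cylinder at times $[-(1+c)t_k,-t_k]$ inside the $(k-1)$-th annulus and for the upper cylinder $\overline{B(0,r_k)}\times[-t_k,0]$, so only the single series $\sum_k\rho(t_k)^{n-2}$ appears; that upper cylinder also covers the point $(0,0)$, which lies in every $\overline{A_k\setminus A_{k+1}}$ and is missed by your pieces, though it has zero capacity.
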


We first show 
how this leads to a proof of Proposition~\ref{prop-irreg-EG}\ref{reg-n>=3}.
A similar argument shows that the origin is regular with respect to the domain
\[
\biggl\{(x,t) \in \R^{n} \times (-\tfrac12,0): 
    0<\frac{-t}{| {\log(-t)} |^{\ga}} < \theta |x|^2 <\theta \biggr\}
    \quad \text{(with $\theta,\ga>0$)}
\]
for $n\ge3$ if and only if $\ga > 2/(n-2)$.

\begin{proof}[Proof of Proposition~\ref{prop-irreg-EG}\ref{reg-n>=3}]
This follows directly from Proposition~\ref{prop-Phi-rho} 
together with the observation that 
\[
\Theta_{l,\theta} = \Phi_{\rho} \quad \text{with} \quad
\rho(\tau) = \frac{\tau^{1/l-1/2}}{\theta^{1/l}}
\]
and thus the integral in~\eqref{eq-div-int-rho} converges if and only if $l<2$.
The regularity for $l \ge 2$ can also be obtained  from the tusk condition 
of Effros--Kazdan~\cite{EfKaz}.
\end{proof}

\begin{proof}[Proof of Proposition~\ref{prop-Phi-rho}]
Note that $(x,t)\in A_k$ is equivalent to 
\[
|x|^2 \le 4t \log \bigl(2^k(-4\pi t)^{n/2} \bigr),
\]
which requires $-t \le t_k:= 2^{-2k/n}/4\pi$.

To prove regularity, we first show,
for sufficiently large $k$, the inclusion 
\begin{equation} \label{eq-A_k-inc-1}
\itoverline{A_{k-1} \setm A_{k}}\setm \Theta_{l,\theta}
\supset
\itoverline{B(0,r_k)} \times [-(1+c)t_k,-t_k] =: C_k,
\quad \text{where } 
r_k= t_k^{1/2} \rho(t_k)
\end{equation}
and $c>0$ is some constant.
Indeed, 
\begin{align*}
F(r_k, (1+c)t_k) &= 
\biggl( \frac{1}{4\pi (1+c) t_k} \biggr)^{n/2}
\exp\biggl( -\frac{ t_k \rho(t_k)^2}{4(1+c)t_k} \biggr)  \\
&= \frac{2^k}{(1+c)^{n/2}} 
\exp \biggl( -\frac{\rho(t_k)^2}{4(1+c)}\biggr)
\ge 2^{k-1},
\end{align*}
whenever $(1+c)^{n/2}\le \tfrac32$ and the exponential is $\ge \tfrac34$, which 
holds for sufficiently large $k$ 
because $\lim_{\tau\to 0+} \rho(\tau) = 0$.
Note also  that  $ct_k \ge r_k^2$ for sufficiently large $k$ (again since 
$\lim_{\tau\to 0+} \rho(\tau) = 0$).
Moreover, we have the simple inclusion
\begin{equation} \label{eq-A_k-inc}
\itoverline{A_{k} \setm A_{k+1}}\setm \Phi_{\rho}
\subset \itoverline{B(0,r_k)} \times [-t_k,0] =: C'_k.
\end{equation}
Using 
\eqref{eq-heat-cyl-n>=3},
we see that, for $k \ge k_0$ with some sufficiently large $k_0$,
\[
\capp (C_k)  \simeq \capp (C'_k)  \simeq
t_k r_k^{n-2}
= t_k^{n/2} \rho(t_k)^{n-2}
\simeq 2^{-k} \rho(t_k)^{n-2}.
\]
Inserting this into the Wiener integral \eqref{eq-Wiener-EG},
we get
\[
\sum_{k=1}^\infty 2^k \capp \bigl( \itoverline{A_k \setm A_{k+1}}\setm \Phi_{\rho} \bigr) 
\simeq 
\sum_{k=1}^{k_0-1} 2^k \capp \bigl( \itoverline{A_k \setm A_{k+1}}\setm \Phi_{\rho} \bigr) 
+ \sum_{k=k_0}^\infty \rho(t_k)^{n-2}.
\]
Since the last series converges if and only if the integral in \eqref{eq-div-int-rho} does,
this concludes the proof.
\end{proof}

\begin{proof}[Proof of Proposition~\ref{prop-irreg-EG}\ref{reg-n=2}]
To prove regularity, we can assume that $l<2$,
by Theorem~\ref{thm-soda-reg} (or the comments after Theorem~\ref{thm:barrier-char}).
As in the proof of Proposition~\ref{prop-Phi-rho}
(with $n=2$ and $\rho(\tau)= \tau^{1/l-1/2}/\theta^{1/l}$ 
in~\eqref{eq-A_k-inc-1}), it can be seen
that there is $c>0$ such that for all sufficiently large $k$,
\begin{equation} \label{eq-A_k-inc-n=2}
\itoverline{A_{k-1} \setm A_{k}}\setm \Phi_{\rho}
\supset
\itoverline{B(0,r_k)} \times [-(1+c)t_k,-t_k] =: C_k,
\quad \text{where } 
r_k= \biggl(\frac{t_k}{\theta}\biggr)^{1/l}
\end{equation}
and $t_k:= 2^{-k}/4\pi$.
Note that $ct_k \ge r_k^2$ for sufficiently large $k$ (since $l<2$).

Thus, \eqref{eq-heat-cyl-n=2} implies that, for 
$k \ge k_0$ with some sufficiently large $k_0$,
\[
\capp (C_k) \simge \frac{ ct_k}{1+ \log (ct_k/r_k^2)}
=  \frac{ ct_k}{1+ \log (c  \theta^{2/l} t_k^{1-2/l})}
\simeq
\frac{2^{-k}}{1+\log ( 2^{(2/l-1)k} )}
\simeq \frac{2^{-k}}{k}.
\]
Inserting this and \eqref{eq-A_k-inc-n=2} into the Wiener integral \eqref{eq-Wiener-EG} 
we get
\[
\sum_{k=1}^\infty 2^k \capp \bigl( \itoverline{A_{k-1} \setm A_{k}}\setm \Theta_{l,\theta} \bigr) 
\simge \sum_{k=k_0}^\infty \frac{1}{k}
= \infty.
\]
Thus the origin is regular for every $\Theta_{l,\theta}$, by the Wiener criterion.

Finally, the irregularity with respect to the punctured cylinder
\[
\Theta_0:=(B(0,1)\setm\{0\})\times(-1,0), 
\]
follows directly from 
the result by Babu\v{s}ka--V\'yborn\'y~\cite[S\"atze~3 and~4]{BabVyb} 
saying that a point on the lateral boundary of a cylinder is regular if and only if
the corresponding base point is regular for harmonic functions.
However, in this simple case we can give a more elementary direct proof.
Consider
the continuous boundary data $f(x,t)=1+t-|x|$ on $\bdy\Theta_0$.
Recall that $u(x)=\log(1/|x|)$ is harmonic in $\R^2 \setm \{0\}$.
It then follows that for $\eps>0$,
\[
      v_\eps(x,t)=\begin{cases}
          \min\{\eps u(x),1\}, & \text{if } -1 < t \le -\eps, \\
          1, & \text{if } -\eps < t < 0, 
          \end{cases}
\]
is a $2$-superparabolic function in $\Theta_0$.
It is easy to see that $v_\eps$ is  competing in the 
definition of $\uP f$ and thus $\uP f \le v_\eps$.
Letting $\eps \to 0$ shows that $\uP f \le 0 <1 =f(0,0)$ in $\Theta_0$ and thus 
the origin is irregular.
\end{proof}

The inclusions \eqref{eq-A_k-inc-1} and \eqref{eq-A_k-inc} show
that in order to get irregularity results for $n=2$ and 
domains punctured by very sharp cusps,
one needs better estimates than \eqref{eq-heat-cyl-n=2} for ``tall'' cylinders.
In particular, the simple upper bound in \eqref{eq-heat-cyl-n=2} gives
a diverging Wiener type integral and cannot be used to prove irregularity
for very sharp cusps nor for the punctured cylinder in
Proposition~\ref{prop-irreg-EG}\ref{reg-n=2}.

\section{Power type barriers}
\label{sect-trad-bar}

The following lemma gives a 
barrier $v_\ka$ as in~\eqref{eq-trad-barrier} 
for a class of soda cans not covered by
Theorem~\ref{thm-soda-reg}.
However, because of nonlinearity, multiples $Mv_\ka$ 
of this barrier are not \p-superparabolic when $M>1$,
so it is far from clear whether there exists a barrier family as in Definition~\ref{def-barrier}.

\begin{lem}  \label{lem-simple-barrier}
Assume that $p>2$ and $\theta>0$.
If $l\ge p/(p-1)$, then
for every 
\begin{equation*}  
0<\ka< \biggl( \frac{p-1}{np\theta}  \biggr)^{1/(p-2)} =: \kat,
\end{equation*}
the function
\begin{equation} \label{eq-vka}
v_\ka (x,t) := \frac{\ka(p-1)}{p}  |x|^{p/(p-1)} + n \ka^{p-1}t 
\end{equation}
is a \p-parabolic barrier for $\Theta_{l,\theta}$ at $(0,0)$.
\end{lem}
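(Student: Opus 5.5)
The plan is to check three things directly: that $v_\ka$ is \p-parabolic in $\Theta_{l,\theta}$, that it is positive there, and that it has the boundary behaviour \eqref{eq-trad-barrier}. The upper bound $\ka<\kat$ and the condition $l\ge p/(p-1)$ will only be needed for the positivity and boundary estimates.

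\emph{Step 1: $v_\ka$ is \p-parabolic.} By Lemma~\ref{lem-Delta-powers} with $C=\ka(p-1)/p\ge0$ and exponent $p/(p-1)$, we get $\Delta_p\bigl(C|x|^{p/(p-1)}\bigr)=(Cp/(p-1))^{p-1}n=n\ka^{p-1}$ for $x\ne0$. Also $\partial_t v_\ka=n\ka^{p-1}$, so $\partial_t v_\ka-\Delta_p v_\ka=0$ pointwise. To make this rigorous in the weak sense of \eqref{eq-def-soln}, I will compute the flux explicitly: $\nabla v_\ka=\ka|x|^{(2-p)/(p-1)}x$, hence $|\nabla v_\ka|^{p-2}\nabla v_\ka=\ka^{p-1}x$. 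This is a smooth vector field with divergence $n\ka^{p-1}$. Integrating by parts against $\phi\in C_0^\infty$ therefore gives \eqref{eq-def-soln} directly; the origin causes no difficulty, and in any case $\Theta_{l,\theta}$ avoids $x=0$. Since $v_\ka$ is continuous, it is a \p-parabolic function, and in particular \p-superparabolic by the comparison principle of Korte--Kuusi--Parviainen~\cite[Lemma~3.5]{KoKuPa10}.

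\emph{Step 2: positivity and boundary values.} Since $v_\ka$ is continuous on $\R^{n+1}$, each $\liminf$ in \eqref{eq-trad-barrier} is simply the value of $v_\ka$ at the boundary point. Clearly $v_\ka(0,0)=0$. The key estimate uses $0<|x|\le1$ and $l\ge p/(p-1)$, which give $|x|^l\le|x|^{p/(p-1)}$. Hence for $0<-t\le\theta|x|^l$,
\[
v_\ka(x,t)\ge \ka|x|^{p/(p-1)}\Bigl(\frac{p-1}{p}-n\theta\ka^{p-2}\Bigr).
\]
The bracket is strictly positive exactly because $\ka<\kat$. This shows $v_\ka>0$ in $\Theta_{l,\theta}$. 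It also gives $v_\ka>0$ on the lower boundary part $\{-t=\theta|x|^l,\ 0<|x|\le1\}$, which includes the corner circle $|x|=1$, $t=-\theta$.

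The remaining boundary pieces are handled as follows. On the top $\{t=0,\ 0<|x|\le1\}$ we have $v_\ka=\ka\frac{p-1}{p}|x|^{p/(p-1)}>0$. On the side $\{|x|=1,\ -\theta\le t\le0\}$ we have $v_\ka\ge\ka\bigl(\frac{p-1}{p}-n\theta\ka^{p-2}\bigr)>0$. Together these exhaust $\bdy\Theta_{l,\theta}\setm\{(0,0)\}$.

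There is no real obstacle here. The only delicate point is to keep the inequality $\ka<\kat$ strict, so that the positive lower bound survives on the lower lateral surface and at $|x|=1$; in the barrier condition \eqref{eq-trad-barrier}, positivity is needed at every boundary point other than the origin.
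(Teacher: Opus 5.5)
Your proof is correct and follows essentially the same route as the paper. You verify \p-parabolicity via Lemma~\ref{lem-Delta-powers}; your explicit flux $|\nabla v_\ka|^{p-2}\nabla v_\ka=\ka^{p-1}x$ is a nice touch. You then use $|x|^l\le|x|^{p/(p-1)}$ to get $v_\ka\ge \ka\bigl(\frac{p-1}{p}-n\theta\ka^{p-2}\bigr)|x|^{p/(p-1)}$, which is positive for $x\ne0$ exactly because $\ka<\kat$. The only difference is that the paper applies this single estimate to all of $\overline{\Theta}_{l,\theta}$ at once, rather than splitting the boundary into top, bottom and side.
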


\begin{proof}
Lemma~\ref{lem-Delta-powers} and a direct calculation show 
that $v_\ka$ is \p-parabolic in $\Theta_{l,\theta}$.
For $t\ge -\theta|x|^l \ge -\theta$ and $\ka<\kat$, 
we have that
\begin{align*}
v_\ka(x,t) 
&\ge  \ka |x|^{p/(p-1)} \biggl( \frac{p-1}{p} - n\theta \ka^{p-2} |x|^{l-\frac{p}{p-1}}  \biggr) \\
&\ge  \ka n\theta (\kat^{p-2} - \ka^{p-2}) |x|^{p/(p-1)},
\end{align*}
since $l-p/(p-1)\ge0$.
As $v_\ka$ is continuous at $(0,0)$ and $v_\ka(0,0)=0$, it becomes a  barrier.
\end{proof}

In \cite[Proposition~4.2]{BBGP}, the functions $v_\ka$ 
were used with large $\ka$ to provide barrier families (and thus regularity) in the following 
two situations: For $l\ge p/(p-1)>
2$ (and thus $p<2$) and for $l>p>2$.
By the very definition of Perron solutions, the barriers in 
Lemma~\ref{lem-simple-barrier} imply regularity of $(0,0)$ for 
boundary data satisfying {$|f-f(0,0)| \le v_\ka$} on $\bdy\Theta_{l,\theta}$.
By using the pasting Lemma~\ref{lem-pasting}, we can use these functions
more efficiently to obtain partial boundary regularity for $(0,0)$ as 
in Proposition~\ref{prop-small-reg}.

\begin{proof}[First proof of Proposition~\ref{prop-small-reg},
based on power type barriers]
Lemma~\ref{lem-Delta-powers} and a direct calculation show 
that for every $\ka>0$, the function $v_\ka$, defined in \eqref{eq-vka},
is \p-parabolic in $\Theta_{l,\theta}$. 
For $t\ge -\theta|x|^l $ and $|x|\le \de \le 1$,  we have that
\[
v_\ka(x,t) 
\ge \biggl( \frac{\ka(p-1)}{p}  - n\theta \ka^{p-1} \de^{l-\frac{p}{p-1}}  \biggr) |x|^{p/(p-1)}
\]
with the coefficient on the right-hand side being maximal when
\[
\ka = \biggl( \frac{\de^{\frac{p}{p-1}-l}}{np\theta} \biggr)^{1/(p-2)}.
\]
For this choice of $\ka$,
\begin{align*}
v_\ka(x,t) &\ge \biggl( \frac{\de^{\frac{p}{p-1}-l}}{np\theta} \biggr)^{1/(p-2)}
\biggl( \frac{p-1}{p}  - n\theta \frac{\de^{\frac{p}{p-1}-l}}{np\theta} \de^{l-\frac{p}{p-1}}  \biggr) |x|^{p/(p-1)}  \\
  &=   \frac{p-2}{p} \biggl( \frac{\de^{\frac{p}{p-1}-l}}{np\theta} \biggr)^{1/(p-2)} |x|^{p/(p-1)}. 
 \end{align*}
For $|x|=\de$ we then have
\begin{equation}
v_\ka(x,t) 
\ge \frac{p-2}{p} \biggl( \frac{\de^{\frac{p}{p-1}-l}}{np\theta} \biggr)^{\frac{1}{p-2}} \de^{\frac{p}{p-1}}
= \frac{p-2}{p} \biggl( \frac{1}{np\theta} \biggr)^{\frac{1}{p-2}} \de^{\frac{p-l}{p-2}} 
 =: m_\de >0.
\label{eq-def-m-de}
\end{equation}
The pasting Lemma~\ref{lem-pasting} 
implies that the function
\[
    \vt_\de(x,t):=\begin{cases}
   \min\{v_\ka(x,t),m_\de\}, & \text{if }  |x| <  \de, \\
   m_\de, & \text{if }  |x| \ge  \de, 
     \end{cases}
\]
is \p-superparabolic in the set   
\[
\{(x,t): 0< -t < \theta|x|^l \} \supset \Theta_{l,\theta}.
\]
Hence $\vt_\de$ is a barrier for $\Theta_{l,\theta}$ at $(0,0)$.

It follows that the functions $f(0,0) \pm \vt_\de$ are admissible 
in the definitions of $\uP f$ and $\lP f$,
respectively, for every $f$ satisfying \eqref{eq-bound-f-ka}.
From this, together with
\eqref{eq-lp<=uP}
and the continuity of $\vt_\de$, \eqref{eq-lim-Hf} follows immediately.
\end{proof}

\begin{proof}[Proof of Corollary~\ref{cor-reg-l=p}]
We may assume that $f(0,0)=0$.
Let $\eps>0$.
Then there is $0<\de<1$ such that $|f(x,t)|<\eps$ for $(x,t) \in \bdy \Theta_{l,\theta}$
with $|x|<\de$.
Let $f_\eps:=(f-\eps)_+$.
Since $l=p$ and thus
\[
   \frac{\frac{p}{p-1}-l}{p-2}+\frac{p}{p-1}=0,
\]
we get from \eqref{eq-ass-with-Mtheta} that 
\begin{equation*}  
f_\eps \le \frac{p-2}{p} \biggl( \frac{\de^{\frac{p}{p-1}-l}}{np\theta} \biggr)^{1/(p-2)} 
\min\{|x|,\de\}^{p/(p-1)} 
\quad \text{on } \bdy \Theta_{l,\theta},
\end{equation*}
i.e.\ the assumption~\eqref{eq-bound-f-ka} holds.
Thus it follows from Proposition~\ref{prop-small-reg} that
\begin{equation*} 
\limsup_{\Theta_{l,\theta} \ni \xi \to (0,0)} \uP_{\Theta_{l,\theta}} f(\xi) 
\le \eps + \limsup_{\Theta_{l,\theta} \ni \xi \to (0,0)} \uP_{\Theta_{l,\theta}} f_\eps(\xi) 
= \eps.
\end{equation*}
Letting $\eps \to 0$, applying this also to $-f$
and using \eqref{eq-lp<=uP}
shows that \eqref{eq-lim-Hf} holds.
\end{proof}

\section{Barriers from the Barenblatt solution}
\label{sect-Baren}

The so-called \emph{Barenblatt solution} \cite{Bare52} 
of the \p-parabolic equation \eqref{eq:para}, with $p>2$,
is for $t>0$ and $\la:=n(p-2)+p$,
defined by
\begin{align*}
B_p(x,t) = B_{p,C}(x,t) & =t^{-n/\la}\biggl[C-
\frac{p-2}{p\la^{1/(p-1)}}
\biggl(\frac{|x|}{t^{1/\la}}\biggr)^{p/(p-1)}\biggr]_{+}^{(p-1)/(p-2)} \\
& = \biggl[ C t^{\frac{n(2-p)}{\la (p-1)}} - \frac{p-2}{p\la^{1/(p-1)}}
\frac{|x|^{p/(p-1)}}{t^{1/(p-1)}}\biggr]_{+}^{(p-1)/(p-2)},
\end{align*}
where $C>0$ is arbitrary.
The Barenblatt solution $B_p$ is \p-parabolic in the
whole upper half-space $\Rn\times (0,\infty)$.

We will now use the Barenblatt solution 
(with $C=1$) to construct
a suitable barrier $\wt_\eps$ for $\Theta_{\theta,l}$
in the range $p/(p-1) \le l \le p$. 
To be able to work with the Barenblatt solution in its usual coordinate form,
$\wt_\eps$ will be a barrier at $(0,t_\eps)$ for the translated set
\[
\{(x,t): t_\eps-\theta |x|^l < t<t_\eps \text{ and } |x|<1\}.  
\]

Note that 
\[
B_{p,C} (x,t) = B_{p,1} (x/A,t/A^p),
\quad \text{where} \quad
A= C^{\frac{\la(p-1)}{np(p-2)}}  = C^{\frac{\la(p-1)}{p(\la-p)}}.
\]
Thus, in the arguments below, we could have used any $C>0$ in the definition
of the Barenblatt solutions.
Using $C=1$ makes some formulas simpler, but the conclusions remain
the same.

\begin{lem} \label{lem-Bar-w}
Assume that $p>2$ and $p/(p-1)\le l <\la$.
Let $0<\eps<1$, 
\begin{equation}      \label{eq-theta-M}
 \theta  
 =\frac{\la^{(p-2)/(p-1)}}{np}
 \quad \text{and} \quad
   M = \frac{p-2}{p \la^{1/(p-1)}}.
\end{equation}
Then, for sufficiently small $\de_\eps>0$ and for 
\begin{equation}  \label{eq-t_eps}
t_\eps \ge \de_\eps^{\frac{\la (l(p-1)-p)}{(p-2)(\la+n)}},
\end{equation}
the function
\[
w_\eps(x,t):= t_\eps^{-n/\la} - B_p(x,t),
\]
with $C=1$ in the definition of $B_p$,
is  a \p-parabolic barrier at $(0,t_\eps)$ for the set
\[
\Theta_\eps:=\{(x,t): 
t_\eps-\theta |x|^l< t < t_\eps
\text{ and }  |x| <  \de_\eps\}.  
\]
Moreover, for $(x,t)\in \overline{\Theta}_\eps$,
\begin{equation}   \label{eq-weps-new}
w_{\eps}(x,t) \ge \frac{M(1-\eps)^2 |x|^{p/(p-1)}}{ t_\eps^{\frac{\la+n}{\la(p-1)}} }.
\end{equation}
In particular, for $|x|=\de_\eps$ and 
$t_\eps= \de_\eps^{\frac{\la (l(p-1)-p)}{(p-2)(\la+n)}}$,
\begin{equation}   \label{eq-est-weps-at-de-eps-teps}
w_{\eps}(x,t_\eps) \ge M(1-\eps)^2 \de_\eps^{(p-l)/(p-2)}.
\end{equation}
\end{lem}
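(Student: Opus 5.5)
The plan is to verify the two defining properties of a barrier directly from the explicit formula for $B_p$, using that the lower bound \eqref{eq-weps-new} gives both positivity and the boundary behaviour. Throughout, write $a=(p-1)/(p-2)>1$, so that
\[
B_p(x,t)=\bigl[t^{-n/(\la a)}-M|x|^{p/(p-1)}t^{-1/(p-1)}\bigr]_+^{a},
\]
with $M$ as in \eqref{eq-theta-M}.

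\emph{Step 1: parabolicity.} First choose $\de_\eps$ so small that $\theta\de_\eps^l<t_\eps/2$; this is compatible with \eqref{eq-t_eps}, since $l<\la$ makes the right-hand side of \eqref{eq-t_eps} a power of $\de_\eps$ larger than $\de_\eps^{l}$ for small $\de_\eps$. Then $\Theta_\eps\subset\Rn\times(t_\eps/2,t_\eps)$ lies in the upper half-space, where $B_p$ is \p-parabolic. The equation is odd and invariant under adding constants, so $w_\eps=t_\eps^{-n/\la}-B_p$ is also \p-parabolic there.

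\emph{Step 2: the lower bound \eqref{eq-weps-new}.} Put $s=t_\eps-t\in[0,\theta|x|^l]$, $X=t_\eps^{-n/(\la a)}$ and $Y=M|x|^{p/(p-1)}t_\eps^{-1/(p-1)}$. By the mean value theorem, $t^{-n/(\la a)}\le X+\frac{n}{\la a}\,t^{-n/(\la a)-1}s$, and also $t^{-1/(p-1)}\ge t_\eps^{-1/(p-1)}$. Since $t/t_\eps\in(1-\eta,1]$ with $\eta\to0$ as $\de_\eps\to0$, the ratio of the error term to $Y$ is bounded by a constant (fixed by the choice of $\theta$ in \eqref{eq-theta-M}) times
\[
|x|^{l-\frac{p}{p-1}}\,t_\eps^{-\frac{(p-2)(\la+n)}{\la(p-1)}}.
\]
Here the time exponent comes from $-\tfrac{n(p-2)}{\la(p-1)}-1+\tfrac1{p-1}$. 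Because $l\ge p/(p-1)$ and $|x|<\de_\eps$, the quantity above is at most $\de_\eps^{l-p/(p-1)}t_\eps^{-(p-2)(\la+n)/(\la(p-1))}\le1$ by \eqref{eq-t_eps}. This is exactly why \eqref{eq-t_eps} has the exponent it does.

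The main obstacle is checking the constants: the error must be at most $\eps Y$, giving $B_p\le(X-(1-\eps)Y)_+^a$. To do this I would track the factor $\frac{n(p-2)}{\la(p-1)}\theta$ against $M$ and absorb the slack from $t/t_\eps\ne1$ into $\eps$, shrinking $\de_\eps$ if necessary. If the constants do not close exactly, the fallback is to shrink $\de_\eps$ further, which only strengthens \eqref{eq-t_eps}.

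\emph{Step 3: conclude.} By convexity of $z\mapsto z^a$ with $a>1$, and using $Y\le\eps X$ for small $\de_\eps$,
\[
X^a-(X-(1-\eps)Y)_+^a\ge a\,(X-Y)^{a-1}(1-\eps)Y\ge(1-\eps)^2X^{a-1}Y .
\]
Moreover $X^{a-1}t_\eps^{-1/(p-1)}=t_\eps^{-(\la+n)/(\la(p-1))}$, and $X^a=t_\eps^{-n/\la}$, so the left-hand side is a lower bound for $w_\eps$. This gives \eqref{eq-weps-new}. Substituting $|x|=\de_\eps$ and equality in \eqref{eq-t_eps} gives \eqref{eq-est-weps-at-de-eps-teps} by arithmetic on exponents.

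For the barrier property, note that $\overline{\Theta}_\eps$ meets $\{x=0\}$ only at $(0,t_\eps)$. Hence \eqref{eq-weps-new} makes $w_\eps>0$ in $\Theta_\eps$ with positive $\liminf$ at every other boundary point. Finally, $w_\eps$ is continuous with $w_\eps(0,t_\eps)=0$, so $w_\eps\to0$ at $(0,t_\eps)$.
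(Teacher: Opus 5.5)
Your outline follows the paper's route: monotonicity of $B_p$ in $t$ on $\overline{\Theta}_\eps$, linearizing the time factor, matching $\theta$ against $M$, using \eqref{eq-t_eps} to compare $|x|^l/t_\eps$ with $(|x|/t_\eps^{1/\la})^{p/(p-1)}$, and then linearizing the outer power. However, the target of Step 2, $B_p\le(X-(1-\eps)Y)_+^a$, is false in general, and no adjustment of constants fixes it.

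Evaluate the constant you left implicit. With $\al=\frac{(p-2)(\la+n)}{\la(p-1)}$, your error-to-$Y$ ratio is, to leading order, $\frac{n(p-2)\theta}{\la(p-1)M}\,|x|^{l-p/(p-1)}t_\eps^{-\al}$. By the choice \eqref{eq-theta-M} (equivalently \eqref{eq-3}), the prefactor is exactly $\frac{1}{p-1}$. The remaining factor is $\le1$ by \eqref{eq-t_eps}. It equals $1$ at $|x|=\de_\eps$ when $t_\eps$ equals the right-hand side of \eqref{eq-t_eps}, and for all $x$ when $l=p/(p-1)$ and $t_\eps=1$. Hence near the corner $t=t_\eps-\theta|x|^l$, $|x|=\de_\eps$, one really has $B_p^{1/a}\approx X-\frac{p-2}{p-1}Y$. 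The error is about $\frac{1}{p-1}Y$, which exceeds $\eps Y$ whenever $\eps<\frac{1}{p-1}$. Your fallback of shrinking $\de_\eps$ does not help. This worst-case ratio does not depend on $\de_\eps$, and the lemma must hold for every $t_\eps$ allowed by \eqref{eq-t_eps}, including equality. Equality is exactly the case used in \eqref{eq-est-weps-at-de-eps-teps}.

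What you are missing is that this loss is recovered exactly in Step 3. The correct intermediate bound is $B_p\le(X-cY)_+^a$ with $c=\frac{(p-2)(1-\eps)}{p-1}$. The paper obtains it from \eqref{eq-1}, whose factor $1+\eps(p-2)$ is tuned to produce this $c$, together with \eqref{eq-3} and \eqref{eq-choose-small-|x|}. In your mean-value formulation, the slack from $t/t_\eps\ne1$ is what gets absorbed into $\eps$. Convexity then gives $X^a-(X-cY)^a\ge ac\,(X-cY)^{a-1}Y=(1-\eps)(X-cY)^{a-1}Y$, since $ac=1-\eps$. So the factor $a=\frac{p-1}{p-2}$, which you discard as merely being $>1$, is precisely what cancels $\frac{p-2}{p-1}$; this is the role of \eqref{eq-2} in the paper.

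Next, $(X-cY)^{a-1}\ge(1-\eps)X^{a-1}$ once $Y\le\eta X$ for a suitably small $\eta$. You should justify this smallness rather than assert it. Note that $Y/X=M(|x|/t_\eps^{1/\la})^{p/(p-1)}$, and \eqref{eq-t_eps} gives $|x|/t_\eps^{1/\la}\le\de_\eps^{(\la-l)/(\al\la)}$; this is where $l<\la$ is used a second time.

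With this correction, the rest of your argument agrees with the paper: parabolicity of $w_\eps$, positivity away from $(0,t_\eps)$, continuity there, and the exponent arithmetic for \eqref{eq-est-weps-at-de-eps-teps}.
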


\begin{proof}
Let $0 <\eps<1$ and 
\begin{equation}   \label{eq-alpha}
\al := 1-\frac{p}{\la(p-1)} = \frac{(p-2)(\la+n)}{\la (p-1)} >0.
\end{equation}
Then \eqref{eq-t_eps} implies that
\begin{equation}    \label{eq-t-eps-de-eps}
\de_\eps^{l-\frac{p}{p-1}}
\le t_\eps^\al =  t_\eps^{\frac{(p-2)(\la+n)}{\la(p-1)}},
\end{equation}
and therefore (using also~\eqref{eq-alpha})
\begin{equation}   \label{eq-choose-small-|x|}
\frac{|x|^l}{t_\eps} 
\le \biggl( \frac{|x|}{t_\eps^{1/\la}} \biggr)^{p/(p-1)}
\quad \text{if } |x| \le \de_\eps.
\end{equation}

Let $\xi=(x,t)\in \overline{\Theta}_\eps$, i.e.\ $|x| \le \de_\eps$
and $t_\eps-\theta |x|^l\le t \le t_\eps$. 
Then, with $M$ as in~\eqref{eq-theta-M},
\begin{align}
B_p(\xi) & =  \biggl[ t^{\frac{n(2-p)}{\la (p-1)}} - M
\frac{|x|^{p/(p-1)}}{t^{1/(p-1)}}\biggr]_{+}^{(p-1)/(p-2)}  \nonumber\\
& \le  \biggl[ (t_\eps - \theta|x|^l)^{\frac{n(2-p)}{\la (p-1)}} - 
M \frac{|x|^{p/(p-1)}}{t_\eps^{1/(p-1)}}   \biggr]_{+}^{(p-1)/(p-2)} \nonumber  \\
&= t_\eps^{-n/\la} \biggl[ \biggl(1-\frac{\theta |x|^l}{t_\eps}\biggr)^{\frac{n(2-p)}{\la (p-1)}} 
  - M \biggl( \frac{|x|}{t_\eps^{1/\la}} \biggr)^{p/(p-1)}\biggr]_{+}^{(p-1)/(p-2)}.
\label{eq-Bp1}
\end{align}
Choose $0<\de_0<1$ so that for all $0\le \de \le \de_0$ both
\begin{equation} \label{eq-1}
(1-\de)^{\frac{n(2-p)}{\la (p-1)}} \le 1+(1+\eps(p-2))
\frac{n(p-2)}{\la (p-1)} \de
\end{equation}
and 
\begin{equation} \label{eq-2} 
(1-\de)^{(p-1)/(p-2)} \le 1-\frac{(1-\eps)(p-1)}{p-2} \de.
\end{equation}

Note that, with our choice of $\theta$ and $M$  in~\eqref{eq-theta-M},
\begin{equation} \label{eq-3}
\frac{n(p-2)\theta}{\la } = \frac{p-2}{p \la^{1/(p-1)} } = M.
\end{equation}
Hence, if both
\begin{equation}    \label{eq-assume-small}
\frac{\theta |x|^l}{t_\eps} \le \de_0
\quad \text{and} \quad
\frac{M(1-\eps)(p-2)}{p-1}  \biggl( \frac{|x|}{t_\eps^{1/\la}} \biggr)^{p/(p-1)} \le \de_0,
\end{equation}
then we get by 
\eqref{eq-choose-small-|x|}--\eqref{eq-3}
that (still assuming that $\xi=(x,t)\in \overline{\Theta}_\eps$)
\begin{align}
B_p(\xi) 
&\os{\eqref{eq-Bp1},\eqref{eq-1}}{\le}  t_\eps^{-n/\la}  \biggl[ 1 + (1+\eps(p-2))
     \frac {n(p-2)}{\la (p-1)} \frac{\theta|x|^l}{t_\eps} - M
\biggl( \frac{|x|}{t_\eps^{1/\la}} \biggr)^{p/(p-1)} \biggr]_{+}^{(p-1)/(p-2)} \nonumber \\
&\os{\eqref{eq-3}}{=}  t_\eps^{-n/\la}  \biggl[ 1 +        \frac{(1+\eps(p-2))M}{p-1} \frac{|x|^l}{t_\eps} - M
               \biggl( \frac{|x|}{t_\eps^{1/\la}} \biggr)^{p/(p-1)} \biggr]_{+}^{(p-1)/(p-2)} \nonumber \\
&\os{\eqref{eq-choose-small-|x|}}{\le} t_\eps^{-n/\la}  \biggl[ 1 - \frac{M(1-\eps)(p-2)}{p-1}
\biggl( \frac{|x|}{t_\eps^{1/\la}} \biggr)^{p/(p-1)} \biggr]_{+}^{(p-1)/(p-2)} \nonumber \\  
&\os{\eqref{eq-2}}{\le} t_\eps^{-n/\la}  \biggl[ 1 - M(1-\eps)^2
\biggl( \frac{|x|}{t_\eps^{1/\la}} \biggr)^{p/(p-1)} \biggr]_{+}. 
\label{eq-Bp<1-new}
\end{align}
Moreover, for sufficiently small $\de_\eps$ (and $|x|\le \de_\eps$),   
we have from \eqref{eq-t-eps-de-eps} that
\begin{equation} \label{eq-Bp>0-inclusion-newer}
\frac{|x|}{t_\eps^{1/\la}} \le
\frac{\de_\eps}{ \bigl(\de_\eps^{1/\al\la} \bigr)^{l-\frac{p}{p-1}} }  
=: \de_\eps^\ga
\le \biggl(  \frac{(p-1)\de_0}{M(1-\eps)(p-2)}  \biggr)^{1-1/p},
\end{equation}
because by \eqref{eq-alpha} and the assumption $l < \la$,
\[
\ga := 1-  \frac{l-p/(p-1)}{\al \la} 
= \frac{\la - p/(p-1) - l +p/(p-1)}{\al \la}
= \frac{\la - l}{\al \la}
> 0.
\]
Since also \eqref{eq-choose-small-|x|} holds, we conclude that
\eqref{eq-assume-small} and thus
\eqref{eq-Bp<1-new} hold, provided that 
$\de_\eps $ is sufficiently small.

If we also choose $\de_0$ small  enough, then  by
\eqref{eq-Bp>0-inclusion-newer},
\begin{equation*} 
\biggl(\frac{|x|}{t_\eps^{1/\la}}\biggr)^{p/(p-1)}
\le  \frac{1}{M(1-\eps)^2} 
\end{equation*}
and thus \eqref{eq-Bp<1-new} holds without taking
the positive part in the last line, i.e.
\begin{equation}
B_p(\xi) 
\le t_\eps^{-n/\la}  \biggl( 1 - M(1-\eps)^2
\biggl( \frac{|x|}{t_\eps^{1/\la}} \biggr)^{p/(p-1)} \biggr). 
\label{eq-Bp<1-newer}
\end{equation}

Next,  the function
\[
 w_\eps(x,t) = t_\eps^{-n/\la} - B_p(x,t)=
B_p(0,t_\eps) - B_p(x,t)
\]
is \p-parabolic in $\Theta_\eps$
and thus \p-superparabolic therein.
By the continuity of $B_p$, we have
\begin{equation} \label{eq-wt_eps-lim-new}
\lim_{\Theta_\eps \ni \zeta \to (0,t_\eps)} w_\eps(\zeta) = w_\eps(0,t_\eps) = 0,
\end{equation}
so the first requirement in the definition \eqref{eq-trad-barrier} of 
a barrier holds.
On the other hand, from \eqref{eq-Bp<1-newer}
we conclude that 
for all $\xi =(x,t) \in \overline{\Theta}_\eps$ with  $x\ne0$,
\begin{align} 
\liminf_{\Theta_\eps \ni \zeta \to \xi} w_{\eps}(\zeta) 
&= w_{\eps}(\xi) 
= t_\eps^{-n/\la} - B_p(\xi) \nonumber\\
& \ge t_\eps^{-n/\la} \biggl( 1- \biggl(1- M(1-\eps)^2 
\biggl( \frac{|x|}{t_\eps^{1/\la}} \biggr)^{p/{p-1)}}
\biggr)
\biggr) \nonumber \\
&= M(1-\eps)^2\frac{|x|^{p/(p-1)}}{ t_\eps^{(\la+n)/\la(p-1)} } 
> 0.  \label{eq-weps-newer}
\end{align}
This proves \eqref{eq-weps-new} and
concludes the construction of the barrier $w_\eps$.

Finally, for $|x|=\de_\eps$ and $t_\eps= \de_\eps^{(l-p/(p-1))/\al}$,
it follows from 
\eqref{eq-weps-newer} that
\[
w_{\eps}(x,t_\eps) \ge
\frac{ M(1-\eps)^2 \de_\eps^{p/(p-1)}}{ \bigl(\de_\eps^{(l-p/(p-1))/\al} \bigr)^{(\la+n)/\la(p-1)} }   
= M(1-\eps)^2 \de_\eps^\tau,
\]
where, by \eqref{eq-alpha},
\begin{equation*}   
\tau := \frac{p}{p-1} - \frac{(l-p/(p-1))(\la+n)}{\al \la (p-1)} 
= \frac{p}{p-1} - \frac{l-p/(p-1)}{p-2} 
= \frac{p-l}{p-2}.\qedhere
\end{equation*}
\end{proof}

The following corollary and its proof show how Lemma~\ref{lem-Bar-w} can be used
to obtain a barrier family for $\Theta_{l,\theta}$ when $l>p>2$.
This gives a different proof for a part of Theorem~\ref{thm-soda-reg}.

\begin{cor}  \label{cor-barrier-family-Barenblatt}
If $l>p>2$ then $(0,0)$ is regular with respect to $\Theta_{l,\theta}$
for all $\theta>0$.
\end{cor}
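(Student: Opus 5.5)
The plan is to apply Lemma~\ref{lem-Bar-w} with $\de_\eps \to 0$ and to use the fact that for $l>p$ the exponent $(p-l)/(p-2)$ in \eqref{eq-est-weps-at-de-eps-teps} is \emph{negative}. This makes the lower bound for $w_\eps$ on $|x|=\de_\eps$ blow up, which yields a barrier family.

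\textbf{Reductions.} By Corollary~\ref{cor-indep-theta} we may fix $\theta$ to be the value in \eqref{eq-theta-M}. Lemma~\ref{lem-Bar-w} needs $l<\la=n(p-2)+p$, and $\la>p$. If $l\ge\la$, pick $l'\in(p,\la)$. Since $|x|^l\le|x|^{l'}$ for $|x|<1$, we have $\Theta_{l,\theta}\subset\Theta_{l',\theta}$, and regularity is inherited by subsets (comments after Theorem~\ref{thm:barrier-char}). So we may assume $p<l<\la$. Fix $\eps=\tfrac12$.

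\textbf{Construction.} For small $\de>0$ (taking $\de=\de_\eps$ as allowed in Lemma~\ref{lem-Bar-w}), put
\[
t_\de=\de^{\frac{\la(l(p-1)-p)}{(p-2)(\la+n)}}.
\]
The exponent is positive since $l>p/(p-1)$, so $t_\de\to0$ as $\de\to0$. Translate $w_\eps$ by $t_\de$ in time, i.e.\ set $W_\de(x,t):=w_\eps(x,t+t_\de)$. This is a \p-parabolic barrier at $(0,0)$ for $\Theta_{l,\theta}\cap\{|x|<\de\}$. By \eqref{eq-weps-new}, on the closure of this set
\[
W_\de\ge c\,|x|^{p/(p-1)}\,t_\de^{-(\la+n)/(\la(p-1))}.
\]
By \eqref{eq-est-weps-at-de-eps-teps}, $W_\de\ge m_\de:=c\,\de^{(p-l)/(p-2)}$ on $|x|=\de$, and $m_\de\to\infty$ as $\de\to0$ because $l>p$. (We should check that the bound in \eqref{eq-weps-new} at $|x|=\de$ holds for all admissible $t$, not only $t=t_\de$. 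It does, since it is independent of $t$.) Define $\vt_\de=\min\{W_\de,m_\de\}$ for $|x|<\de$ and $\vt_\de=m_\de$ for $|x|\ge\de$. Since $W_\de\ge m_\de$ at $|x|=\de$, $\vt_\de$ is lower semicontinuous. The pasting Lemma~\ref{lem-pasting}, applied as in the first proof of Proposition~\ref{prop-small-reg}, shows that $\vt_\de$ is \p-superparabolic in $\Theta_{l,\theta}$. It is also a barrier at $(0,0)$, since it is positive away from the origin on the closure.

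\textbf{Barrier family (main obstacle).} Given $k$, we must show $\vt_\de\ge k$ at all boundary points $\xi$ with $|\xi|\ge1/k$. Such points have $|x|\ge c_k>0$: on the lower boundary $-t=\theta|x|^l$, $|\xi|\to0$ forces $|x|\to0$, and on the top $t=0$ we have $|\xi|=|x|$. Now choose $\de<c_k$ small enough that $m_\de\ge k$. Then:
\begin{itemize}
\item for $|x|\ge\de$, $\vt_\de=m_\de\ge k$;
\item for $c_k\le|x|<\de$ — an empty range, since $\de<c_k$.
\end{itemize}
Hence $\vt_\de\ge k$ wherever $|\xi|\ge1/k$. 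I expect the delicate part to be checking the bookkeeping behind this: lower semicontinuity at the seam $|x|=\de$, and that the $\liminf$ condition of Definition~\ref{def-barrier} holds at every boundary point, including those with $t=0$. Taking $\de=\de_j\to0$ gives a barrier family, and Theorem~\ref{thm:barrier-char} yields regularity.
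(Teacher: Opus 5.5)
Your proposal is correct and follows the paper's proof essentially step by step: you reduce to $p<l<\la$ via $\Theta_{l,\theta}\subset\Theta_{l',\theta}$, fix $\eps$, and paste the time-translated Barenblatt barrier $w_\eps(x,t+t_\eps)$ with the constant $M(1-\eps)^2\de_\eps^{(p-l)/(p-2)}$, which blows up as $\de_\eps\to0$ because $l>p$; you handle $\theta$ by Corollary~\ref{cor-indep-theta} at the start, whereas the paper uses Lemma~\ref{lem-scaling-gen} at the end. Your extra bookkeeping only makes explicit what the paper leaves implicit, namely that $|\xi|\ge 1/k$ forces $|x|\ge c_k$ on $\bdy\Theta_{l,\theta}$, and that the $t$-independence of \eqref{eq-weps-new} gives the bound on all of $\{|x|=\de_\eps\}$.
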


\begin{proof}
It suffices to consider $l<\la$,
since regularity with respect to a larger domain implies regularity 
with respect to a smaller one,
see the comments after Theorem~\ref{thm:barrier-char}.
Let $\theta$, $M$, $w_\eps$ and $\de_\eps$ be as in Lemma~\ref{lem-Bar-w}
and let $\tau=(p-l)/(p-2)$.
The  pasting Lemma~\ref{lem-pasting}, together with the 
estimate~\eqref{eq-est-weps-at-de-eps-teps}, then implies  that the function,
defined for $(x,t)\in\Theta_{l,\theta}$ by
\begin{equation}   \label{eq-def-wt}
    \wt_\eps(x,t):=\begin{cases}
   \min\{w_{\eps}(x,t+t_\eps),M(1-\eps)^2\de_\eps^\tau\}, & \text{if }  
    |x| <  \de_\eps, \\
   M(1- \eps)^2\de_\eps^\tau, & \text{if }  
   |x| \ge  \de_\eps,
    \end{cases}
\end{equation}
is \p-superparabolic in $\Theta_{l,\theta}$.
We will now fix $0<\eps<1$ in Lemma~\ref{lem-Bar-w},
but will vary $\de_\eps$.
Since $\tau<0$ in \eqref{eq-def-wt} and $l(p-1)-p>0$, the choice
\[
\de_\eps= \de_{\eps,j}
= 1/j \quad \text{and}  \quad 
t_\eps= t_{\eps,j} 
= (1/j)^{\frac{\la (l(p-1)-p)}{(p-2)(\la+n)}}
\]
in Lemma~\ref{lem-Bar-w} 
gives, for sufficiently large $j$, a barrier family at $(0,0)$
for $\Theta_{l,\theta}$ with this particular choice of $\theta$.
Lemma~\ref{lem-scaling-gen} then guarantees a barrier family 
for all $\theta>0$.
Hence $(0,0)$ is regular with respect to $\Theta_{l,\theta}$ for all $\theta >0$,
by Theorem~\ref{thm:barrier-char}.
\end{proof}

We are now ready to give our second proof of 
Proposition~\ref{prop-small-reg}.

\begin{proof}[Second proof of Proposition~\ref{prop-small-reg},
based on Barenblatt type barriers]
If $l >p>2$, then the full regularity follows
directly from Corollary~\ref{cor-barrier-family-Barenblatt},
whose proof also provides a Barenblatt type barrier family.
Assume therefore that $p/(p-1)\le l\le p$.

Let 
$\theta>0$ and $0<\eps < \de \le1$  be arbitrary and set
\[
\theta_0 =\frac{\la^{(p-2)/(p-1)}}{np}.
\]
Let $\de_\eps>0$, $t_\eps= \de_\eps^{\frac{\la (l(p-1)-p)}{(p-2)(\la+n)}}$, 
\[
   M = \frac{(p-2)}{p \la^{1/(p-1)}}
  \quad \text{and} \quad 
w_\eps(x,t):= t_\eps^{-n/\la} - B_p(x,t)
\]
be as in Lemma~\ref{lem-Bar-w}.
Then $w_\eps$ is \p-parabolic in the set
 \[
\{(x,t): t_\eps - \theta_0 |x|^l< t < t_\eps
\text{ and }  |x| <  \de_\eps\}, 
\]
by Lemma~\ref{lem-Bar-w}.
For
\[
a=\frac{\de_\eps}{\de}, \quad 
b= \frac{\theta_0}{\theta} a^l  
\quad \text{and} \quad
A = \biggl( \frac{b}{a^p} \biggr)^{1/(p-2)} 
= \biggl( \frac{\theta_0}{\theta} \biggl( \frac{\de_\eps}{\de} \biggr)^{l-p} \biggr)^{1/(p-2)},
\]
let 
\begin{equation}   \label{eq-def-u-eps-from-w}
u_\eps(x,t) = A w_\eps(ax, bt+t_\eps).
\end{equation}
Then by the scaling Lemma~\ref{lem-scaling-gen},
$u_\de$ is \p-parabolic in 
the restricted soda can domain $\{(x,t) \in \Theta_{l,\theta}: |x|<\de\}$.
Moreover, by \eqref{eq-theta-M}, \eqref{eq-weps-new} and noting that
\[
\frac{l-p}{p-2}+\frac{p}{p-1} = \frac{l-p/(p-1)}{p-2},
\]
we have
for $(x,t)\in \overline{\Theta}_{l,\theta}$ with $|x|\le\de$,
\begin{align}
u_\eps(x,t)  
&\ge \biggl( \frac{\theta_0}{\theta} \biggl( \frac{\de_\eps}{\de} \biggr)^{l-p} \biggr)^{1/(p-2)} 
   \frac{M(1-\eps)^2 (a|x|)^{p/(p-1)}} { \Bigl( \de_\eps^{\frac{\la (l(p-1)-p)}{(p-2)(\la+n)}} 
   \Bigr)^{(\la+n)/\la(p-1)}} \nonumber \\
&= \frac{\displaystyle   \biggl( \frac{\la^{(p-2)/(p-1)}}{np\theta} \biggr)^{1/(p-2)}  
    \frac{(p-2)(1-\eps)^2}{p\la^{1/(p-1)}} 
\biggl( \frac{\de_\eps}{\de} \biggr)^{\frac{l-p}{p-2}+\frac{p}{p-1}} }
{\de_\eps^{\frac{l-p/(p-1)}{p-2}}}|x|^{p/(p-1)} \nonumber \\
&= \frac{(p-2)(1-\eps)^2}{p} \biggl( \frac{\de^{\frac{p}{p-1}-l}}{np\theta} \biggr)^{1/(p-2)}
|x|^{p/(p-1)},
\label{eq-est-u-eps}
\end{align}
which, up to the factor $(1-\eps)^2$,
gives the same coefficient in front of $|x|^{p/(p-1)}$ 
as in~\eqref{eq-bound-f-ka}.
In particular, with 
\begin{equation*}   
M_\theta=\frac {p-2}{p} \biggl(\frac{1}{np\theta}\biggr)^{1/(p-2)}
\end{equation*}
as in \eqref{eq-M-theta} and 
$\tau=(p-l)/(p-2)\ge0$, we get as in \eqref{eq-def-m-de} that
\begin{equation*}   
u_\eps(x,t) \ge  M_\theta(1-\eps)^2\de^\tau,
\quad \text{when } |x|=\de.
\end{equation*}
The  pasting Lemma~\ref{lem-pasting} then shows that the function
\begin{equation}   \label{eq-def-ut}
    \ut_\eps(x,t):=\begin{cases}
   \min\{u_{\eps}(x,t),M_\theta(1-\eps)^2\de^\tau\}, & \text{if }  
(x,t)\in\Theta_{l,\theta} \text{ and }  
|x| <  \de,
\\
   M_\theta(1- \eps)^2\de^\tau, & \text{if }  
(x,t)\in\Theta_{l,\theta} \text{ and }
|x| \ge  \de,
    \end{cases}
\end{equation}
is \p-superparabolic in $\Theta_{l,\theta}$ and is thus a barrier.

To conclude the proof of \eqref{eq-lim-Hf}, let $f$ be as in~\eqref{eq-bound-f-ka}.
We can assume that $f(0,0)=0$.
Then, \eqref{eq-est-u-eps} and \eqref{eq-def-ut} imply that
for sufficiently small $\eps>0$,
\begin{align*}
|f| &\le M_\theta \bigl( (1-\eps)^2 + (2\eps -\eps^2)\bigr) \de^{(\frac{p}{p-1}-l)/(p-2)}
    \min\{|x|,\de\}^{p/(p-1)}  \\
&\le \ut_\eps  + (2\eps - \eps^2) M_\theta \de^\tau 
\quad \text{on }  \bdy \Theta_{l,\theta},
\end{align*}
and hence, by the definition of Perron solutions,
\[
   \uP_{\Theta_{l,\theta}} f(x,t) \le \ut_\eps  + (2\eps - \eps^2) M_\theta \de^\tau 
\quad \text{in } \Theta_{l,\theta}.
\]
Using \eqref{eq-wt_eps-lim-new} with \eqref{eq-def-u-eps-from-w}
and then letting $\eps \to 0$ shows that
\[
     \limsup_{\Theta_{l,\theta} \ni \xi \to (0,0)} \uP_{\Theta_{l,\theta}} f(\xi)  \le 0.
\]
Applying this also to $-f$, and using \eqref{eq-lp<=uP}, yields
\begin{equation*}
0 \le \lim_{\Theta_{l,\theta} \ni \xi \to (0,0)} \lP_{\Theta_{l,\theta}} f(\xi)  
\le      \lim_{\Theta_{l,\theta} \ni \xi \to (0,0)} \uP_{\Theta_{l,\theta}} f(\xi) 
\le 0,
\end{equation*}
i.e.\ \eqref{eq-lim-Hf} holds.
\end{proof}

\section{Real-world interpretations}
\label{sect-real-world}

We end the paper with two naive situations
of how the Dirichlet problem in soda cans and the regularity of the origin 
can be interpreted in real-world problems.

\begin{example}  \label{ex-diffusion}
\emph{Nonlinear diffusion in a medium.}
Typically, $n=3$.
At each time $t\in(-t_0,0)$, 
let the ball $B^t=B(0,r_t)$ consist of a substance that 
is being released to (or dissolving in) the surrounding medium, where it diffuses 
according to the \p-parabolic equation.
The solution $u(x,t)$ of this equation in the complement of $\bigcup_{t\in(-t_0,0)}B^t$
describes the concentration  
of the substance in position $x\notin B^t$ at time $t\in(-t_0,0)$.

The initial concentration at $t=-t_0$ is $u(x,-t_0)=0$ for $x\notin B^{-t_0}$.
On the boundary $\bdy B^t$, the concentration is 
a known function $f$ (continuous in space and time),
e.g.\  a fixed constant.

As the substance releases into the medium, the ball $B^t$ containing it
shrinks to eventually become a point and disappear at the final time $t=0$.
In the case of soda can domains $\Theta_{l,\theta}$, studied in this paper, the speed of 
shrinking is given by 
\[
r_t=\biggl(\frac{-t}{\theta}\biggr)^{1/l},  \quad -t_0<t<0.
\]

Regularity of the final point $(0,0)$ with respect to the soda can $\Theta_{l,\theta}$
then means that just before all the substance 
has been released, its concentration near
$x=0$ is close to $f(0,0)$.
Irregularity means a possible jump or discontinuity of the concentration
for some continuous data $f$.

For example, the \p-superparabolic function $u$ in the proof of Proposition~\ref{prop-soda-irreg}
shows that immediately after  the last substance has been released,
its concentration is zero at every $x\ne0$.
Indeed, $p<2$ in this example means fast diffusion.

(The soda cans $\Theta_{l,\theta}$ also impose
boundary conditions at $|x|=1$, which we ignore here. 
Indeed, a simple comparison between Perron solutions shows that
regularity at $(0,0)$ holds 
equivalently also for discontinuous bounded boundary data
as long as they are continuous at $(0,0)$, 
cf.\  the proof of Lemma~9.6 in  Heinonen--Kilpel\"ainen--Martio~\cite{HeKiMa}.)
\end{example}

\begin{example}  \label{ex-shallow-layer}
\emph{Heat equation with  a partially immersed heater/cooler.}
Typically, $n=2=p$.
In this interpretation, a hot or cool body
passes through a shallow layer of a medium, represented by
the 2-dimensional time slices of $\Theta_{l,\theta}$, 
until it completely leaves the medium at time $t=0$.
For example, a fading stream of magma is passing through
an underground layer of water in a geological slab.
At each time $t\in (-t_0,0)$, the body intersects the shallow layer of the medium 
in an essentially $2$-dimensional ball $B^t=B(0,r_t)$.
The parameters $l$ and $\theta$  in $\Theta_{l,\theta}$ describe
how fast this cross-section changes in time.

The body has certain temperature (continuous in space and time), 
which provides the 
boundary data $u(x,t)$ on $\bdy B^t$ at time $t$.
The heat transfer in the medium exterior to the body follows the 
heat equation with $p=2$.

Proposition~\ref{prop-irreg-EG}\ref{reg-n=2} shows that for all 
$l,\theta>0$, the final temperature of the heater/cooler is  
reflected in the temperature of the medium 
close to the separation of the body from the medium,
but this need not be the case if the heater/cooler is  an infinitesimally
thin fibre.
\end{example}


\end{document}